\definecolor{document_fontcolor}{rgb}{0, 0, 0}
\newlength{\lyxlistindent}      
\theoremstyle{plain}
\newtheorem{thm}{\protect\theoremname}
  \theoremstyle{definition}
  \newtheorem{defn}[thm]{\protect\definitionname}
  \theoremstyle{remark}
  \newtheorem{rem}[thm]{\protect\remarkname}
  \theoremstyle{plain}
  \newtheorem{lem}[thm]{\protect\lemmaname}
  \theoremstyle{plain}
  \newtheorem{cor}[thm]{\protect\corollaryname}
  \theoremstyle{plain}
  \newtheorem*{thm*}{\protect\theoremname}
\newcommand{\xyR}[1]{ \makeatletter
\xydef@\xymatrixrowsep@{#1} \makeatother} 
\newcommand{\xyC}[1]{ \makeatletter
\xydef@\xymatrixcolsep@{#1} \makeatother} 
\newcommand{\ra}{\longrightarrow}
\newcommand{\xrad}[1]{\xrightarrow{\displaystyle\ \ #1\ \ }} 
\newcommand{\field}[1]{\mathbb{#1}}
\newcommand{\R}{\field{R}} 
\newcommand{\N}{\field{N}} 
\renewcommand{\phi}{\varphi} 
\newcommand{\diff}[1]{\,\hbox{\rm d}#1} 
\newcommand{\Coo}{\mbox{\ensuremath{\mathcal{C}}}^{\infty}} 
\DeclareMathOperator*{\Man}{{\bf Man}} 
\newcommand{\FDiff}{\ext{\Coo}} 
\newcommand{\SFR}{{\bf S}\mbox{$\FR^\infty$}} 
\newcommand{\Diff}{{\bf Dlg}} 
\newcommand{\FR}{{{}^\bullet\R}} 
\newcommand{\st}[1]{{\makebox[0pt][r]{\phantom{$#1$}}^{\circ} #1}} 
\newcommand{\sh}[1]{{\makebox[0pt][r]{\phantom{$#1$}}^{\text{\textnormal{\nisigma}}} #1}} 
\newcommand{\ext}[1]{{\makebox[0pt][r]{\phantom{$#1$}}^{\bullet} #1}} 
\newcommand{\extFR}{{}^\bullet\bar{\R}} 
\def\XXint#1#2#3{{\setbox0=\hbox{$#1{#2#3}{\int}$}
\vcenter{\hbox{$#2#3$}}\kern-.5\wd0}}
\def\Rint{\mathchoice
{\XXint\displaystyle\textstyle{\sim}}
{\XXint\textstyle\scriptstyle{\sim}}
{\XXint\scriptstyle\scriptscriptstyle{\sim}}
{\XXint\scriptscriptstyle\scriptscriptstyle{\sim}}
\!\int} 
\newcommand{\ptind}{\displaystyle \mathop {\ldots\ldots\,}} 
\newcommand{\then}{\quad \Longrightarrow \quad} 
  \providecommand{\corollaryname}{Corollary}
  \providecommand{\definitionname}{Definition}
  \providecommand{\lemmaname}{Lemma}
  \providecommand{\remarkname}{Remark}
  \providecommand{\theoremname}{Theorem}
\providecommand{\theoremname}{Theorem}
\begin{document}

\begin{frontmatter}{}

\title{Calculus in the ring of Fermat reals\\
{\large{}Part I: Integral calculus}}

\author[PG]{Paolo Giordano\corref{corr}\fnref{PGfn}}

\ead{paolo.giordano@univie.ac.at}

\author[EW]{Enxin Wu\fnref{EWfn}}

\ead{enxin.wu@univie.ac.at}

\cortext[corr]{Corresponding author}

\fntext[PGfn]{P. Giordano has been supported by grants P25116 and P25311 of the
Austrian Science Fund FWF.}

\fntext[EWfn]{E. Wu has been supported by grants P25116 and P25311 of the Austrian
Science Fund FWF.}

\address[PG]{University of Vienna, Austria}

\address[EW]{University of Vienna, Austria}
\begin{abstract}
We develop the integral calculus for quasi-standard smooth functions
defined on the ring of Fermat reals. The approach is by proving the
existence and uniqueness of primitives. Besides the classical integral
formulas, we show the flexibility of the Cartesian closed framework
of Fermat spaces to deal with infinite dimensional integral operators.
The total order relation between scalars permits to prove several
classical order properties of these integrals and to study multiple
integrals on Peano-Jordan-like integration domains.\end{abstract}
\begin{keyword}
Ring of Fermat reals, nilpotent infinitesimals, extension of the real
field, Fermat-Reyes theorem, integration.
\end{keyword}

\end{frontmatter}{}

\tableofcontents{}

\section{\label{sec:Introduction}Introduction}

In spite of old controversies concerning the existence and rigor of
infinitesimals, it is remarkable that using only elementary calculus,
one can define and deeply study non-trivial rings containing the real
field and infinitesimals. This goal has been achieved e.g.\ with
the Levi-Civita field (\citep{Lev1,Lev2,Berz2,Berz-et-al,Sha,Sh-Be}),
the Colombeau ring of generalized numbers (\citep{Col84,Col2,Obe92})
or the ring of Fermat reals (\citep{Gio10a,Gio10b,Gio10e,Gi-Ku13,Gio09}).

A common erroneous opinion is that in non-Archimedean theories, we
pay the price of needing a non-trivial knowledge of mathematical logic
(\citep{Hen}) or switching to intuitionistic logic (\citep{Koc})
or using some non-trivial set theoretical methods (\citep{Con2}).
The non-Archimedean theories mentioned above need none of these. Another
common feeling is that, to deal with infinitesimals one needs to have
a great formal attitude in doing mathematics. Of course, there are
half-way approaches trying to require a less formal path: e.g.\ at
least up to a certain level, also nonstandard analysis, surely the
most powerful theory of infinitesimals and infinities, can be presented
without the need to face formal logic (\citep{Lux73}).

Aside from theories based on analysis, several algebraic methods are
also available (\citep{Tall,Berz2,Kr-Mi,Ber}). However, in those
cases, the interested reader has to be able either to proceed formally,
or to develop an intuition related to such algebraic infinitesimals.
For example, in \citep{Gio10b} it is argued that an intuitive picture
of infinitesimal segments of lengths $h$ and $k$ such that $h^{2}=k^{2}=0$
but $h\cdot k\ne0$ is not possible. These numbers are frequently
used in Synthetic Differential Geometry (SDG, \citep{Koc}). At the
end, it is a matter of taste about what approaches are felt as beautiful,
manageable and in accordance with our philosophical approach to mathematics.

The ring $\FR$ of Fermat reals, can be defined and studied using
only elementary calculus; see (\citep{Gio10a}). It extends the field
$\R$ of real numbers, and contains nilpotent infinitesimals, i.e.,\ $h\in\FR_{\ne0}$
such that $h^{n}=0$ for some $n\in\N_{>1}$. The methodological thread
followed in the development of the theory of Fermat reals has been
always the necessity to obtain a good dialogue between formal properties
and their informal interpretations. Indeed, to cite some results of
this dialogue, we can say that the ring $\FR$ is totally ordered
and geometrically representable (\citep{Gio10b}).

Related to the ring $\FR$, we have a good notion of smooth function
called\emph{ }quasi-standard smooth function. These functions are
arrows of the category $\FDiff$ of Fermat spaces (\citep{Gio10e,Gio09}),
which is essentially a generalization of the notion of diffeological
space (\citep{Igl}), but starting from quasi-standard smooth functions
defined on arbitrary subsets $S\subseteq\FR^{\sf s}$ instead of smooth
functions defined on open sets $U\subseteq\R^{\sf u}$. The category
$\FDiff$ is a quasi-topos \citep{GioWu15b}, and hence has several
desired categorical properties, such as Cartesian closedness. Furthermore,
$\FDiff$ contains finite-dimensional smooth manifolds, and infinite
dimensional spaces like $\Man(M,N)$, the space of all smooth functions
between two smooth manifolds $M$ and $N$, and integral and differential
operators, etc. One of our goals is to develop differential geometry
on these spaces using nilpotent infinitesimals. For the first steps,
we need to develop differential and integral calculus, which is also
preliminary to other advanced topics, such as calculus of variations,
generalized functions, stochastic infinitesimals, etc.

As a start, in the present work, we develop integral calculus for
quasi-standard smooth functions. Later in another paper, we will develop
differential calculus. This exchange is motivated by the need of using
Taylor's theorem with integral remainder in differential calculus.

In our approach, the derivatives are defined using the existence and
uniqueness of the quasi-standard incremental ratio (see the Fermat-Reyes
Thm.~\ref{thm:Fermat-Reyes-h-small}), and the integrals are defined
using the existence and uniqueness of primitives (see Thm.~\ref{thm:existenceOfIntegralForFiniteBoundaries}
below). Following this approach, we have an extension of the classical
results of calculus and also a quick agreement with several informal
calculations used in physics (see e.g.\ \citep{Gio10b}).

What are the new results following this approach, compared to classical
analysis\footnote{Here by classical analysis, we mean the most developed theory of infinite
dimensional spaces, i.e.,\ locally convex topological vector space
theory. We recall that the category of these spaces is not Cartesian
closed; see \citep{Gio10c}.} or NSA? Besides the possibility to use nilpotent infinitesimals to
simplify several calculations, we will see that the Cartesian closed
framework $\FDiff$ of Fermat spaces permits to easily prove that
all the classical infinite dimensional integral and differential operators
are smooth maps, i.e., they are morphisms in $\FDiff$. On the other
hand, convenient vector spaces and related smooth functions (\citep{Fr-Kr,Kr-Mi})
can be embedded both in the category $\Diff$ of diffeological spaces
and in the category $\FDiff$ of Fermat spaces (see \citep{Gio10e,Igl}).
Therefore, one of the best calculus in locally convex topological
vector spaces is still available in $\FDiff$. Finally, the Fermat
functor $\ext{(-)}:\Diff\ra\FDiff$ (\citep{Gio09,GioWu15b}) connects
these categories and has very good preservation properties.

What are the new results compared to smooth infinitesimal analysis
(SIA; see e.g.\ \citep{Lav})? In the ring of Fermat reals, we have
a total order relation rather than only a pre-order, and indeed we
are able to prove classical inequalities and order properties of integrals.
Because of the simplicity of the whole model (if compared to those
of SIA), in our framework we can not only formally repeat several
proofs of SIA, but also suitably generalize some classical existence
theorems like the inverse function theorem and the intermediate value
theorem.

The structure of the paper is as follows. 

In Section \ref{sec:Background-for-FR}, we summarize the basics of
Fermat reals: unique decomposition of elements, Taylor's formula with
nilpotent increments, Fermat topology, extension of ordinary smooth
functions, well-ordering, quasi-standard smooth functions, etc.

In Section \ref{sec:A-uniqueness-theorem}, we review another topology
on Fermat reals called the $\omega$-topology (\citep{Gi-Ku13}),
which is only used in this section. Unlike the Fermat topology, we
show that in general, quasi-standard smooth functions are not continuous
with respect to this topology. On the other hand, we prove a uniqueness
theorem (Thm. \ref{thm:uniqueness}), which says that every quasi-standard
smooth function is determined by its values on any dense subset of
the domain in the $\omega$-topology. This uniqueness theorem is a
generalization of previous results like the cancellation law of non-infinitesimal
functions (\citep[Lem. 25]{Gio10e}), and it will be useful for subsequent
sections.

In Section \ref{sec:The-constant-function}, we show that every interval
and its interior in $\FR$ is connected (Lem. \ref{lem:intervalsConnected}),
and we use this to prove the constant function theorem (Thm. \ref{thm:constancyPrinciple}),
i.e., that only constant quasi-standard smooth functions have zero
derivatives on a non-infinitesimal interval (see Sec. \ref{sec:Background-for-FR}).
From now on, we require the ambient quasi-standard smooth function
to be defined on a non-infinitesimal interval, due to the uniqueness
of the incremental ratio in the Fermat-Reyes theorem (Thm. \ref{thm:Fermat-Reyes-h-small}).
Since the domain of every quasi-standard smooth function defined on
an interval of $\FR$ can always be extended to a Fermat open set
(although the extension may not be unique; see Lem. \ref{lem:extensionFcnOutsideBoundaries}),
we generalize the Fermat-Reyes theorem for quasi-standard smooth functions
defined on a non-infinitesimal interval instead of a Fermat open set,
and hence introduce the notion of left and right derivatives (Lem. \ref{lem:FermatReyesClosedInterval}).

In Section \ref{sec:Existence-and-uniqueness}, we prove the existence
and uniqueness of primitives for quasi-standard smooth functions (Thm. \ref{thm:existenceOfIntegralForFiniteBoundaries}
and Thm. \ref{thm:existenceOfIntegralForInfiniteBoundaries}) by the
sheaf property of such functions and the constant function theorem
(Thm. \ref{thm:constancyPrinciple}). From this, we define integrals
of quasi-standard smooth functions (Def. \ref{def:integral}), which
is a natural generalization of integrals of ordinary smooth functions
(Rem. \ref{rem:afterDefIntegral}). Note that, although we require
the ambient quasi-standard smooth function to be defined on a non-infinitesimal
interval, the integrals can be defined on an infinitesimal interval.
From the proof of Thm. \ref{thm:existenceOfIntegralForFiniteBoundaries},
we get a useful expression of integrals (Lem. \ref{lem:IntegralAsfiniteSum})
as a sum of integrals of the local expression of quasi-standard smooth
functions as extension of ordinary smooth functions with parameters.

Although most results about integrals of quasi-standard smooth functions
can be proved using Lem. \ref{lem:IntegralAsfiniteSum}, the local
expression, (see Rem. \ref{rem:integral-local}), we prefer ``global''
proofs. In Section \ref{sec:Classical-formulas-of} and the first
part of Section \ref{sec:Infinite-dimensional-integral}, we prove
the classical formulas of integrals calculus (Thm. \ref{thm:classicalFormulasOfIntegralCalculus},
Thm. \ref{thm:integrationByChangeOfVariable}, Lem. \ref{lem:differentiationIntegral-x-Under}
and Cor. \ref{cor:integralAdditiveDomain}). The ``global'' proof
of the additivity of integrals with respect to the integration intervals
(Cor. \ref{cor:integralAdditiveDomain}) relies on the commutativity
of differentiation and integration (Lem. \ref{lem:derivationUnderIntegralSign}),
and the latter is a consequence of the $\FDiff$ smoothness of some
infinite dimensional integral operators (Lem. \ref{lem:smoothIntegralOperatorOn_0_1}
and Cor. \ref{cor:smoothnessOfIntegralOperators}). Finally, we get
a form of mean value theorem called Hadamard's lemma (Cor. \ref{cor:incrementalRatioIntegralFunction}).

In the rest of Section \ref{sec:Infinite-dimensional-integral}, we
study the standard and infinitesimal parts of an integral (Def. \ref{def:stdInfParts}
and Lem. \ref{lem:stdPartAndShadowOfFunction}), and we review divergence
and curl using suitable (nilpotent) infinitesimals in Fermat reals.
As a result, any integral over an infinitesimal integration interval
is infinitesimal, and any integral of an infinitesimal-valued quasi-standard
smooth function is infinitesimal.

Thanks to the total order on $\FR$, in Section \ref{sec:Inequalities-for-integrals},
we get some similar inequalities for our integrals as the classical
ones, with respect to arbitrary integration interval (monotonicity
of integral (Lem. \ref{lem:monotoneIntegral} and Thm. \ref{thm:monotoneIntegral}),
a replacement of the inequality for absolute value of quasi-standard
smooth functions (Thm. \ref{thm:substIntAbsValue}), and the Cauchy-Schwarz
inequality (Thm. \ref{thm:Cauchy-Schwarz})). 

In Section \ref{sec:Multiple-integrals-and}, we study multiple integrals
of quasi-standard smooth functions, with integration domain a finite
family of pairwise disjoint boxes (Thm. \ref{thm:intOnSumIntervals}
and Thm. \ref{thm:intElemSets}), and we prove Fubini's theorem (Thm. \ref{thm:Fubini}).

\section{\label{sec:Background-for-FR}Background for Fermat reals}

In this section, we summarize the basics of Fermat reals by listing
some properties that permit to characterize the ring $\FR$ (see \citep{Gio14a}
for a formal list of axioms). For a more detailed presentation and
for a description of the very simple model, see \citep{Gio10a,Gio09}.
See also \citep{Gi-Ku13} for the study of $\FR$ using metrics, the
characterization of its ideals, roots of nilpotent infinitesimals,
some applications to fractional derivatives, and a computer implementation.

At the end of this section, we fill in a gap in the proof of the Fermat-Reyes
theorem in \citep{Gio10e}, so that the statement of the theorem (see
Thm.~\ref{thm:Fermat-Reyes-h-small}) is slightly different, but
the differential calculus developed in \citep{Gio10e} still holds.

Fermat reals can be defined as a quotient ring of the ring of little-oh
polynomials. Equivalently, every Fermat real $x\in\FR$ can be written,
in a unique way, as
\begin{equation}
x=\st{x}+\sum_{i=1}^{N}\alpha_{i}\cdot\diff{t}_{a_{i}},\label{eq:decomposition}
\end{equation}
where $\st{x}$, $\alpha_{i}$, $a_{i}\in\R$ are standard reals,
$a_{1}>a_{2}>\dots>a_{N}\ge1$, $\alpha_{i}\ne0$, and $\diff{t}_{a}$
verifies the following properties:
\begin{align*}
\diff{t}_{a}\cdot\diff{t}_{b} & =\diff{t}_{\frac{ab}{a+b}}\\
\left(\diff{t}_{a}\right)^{p} & =\diff{t}_{\frac{a}{p}}\quad\forall p\in\R_{\ge1}\\
\diff{t}_{a} & =0\quad\forall a\in\R_{<1}.
\end{align*}
The expression \eqref{eq:decomposition} is called the \emph{decomposition}
of $x$, and the real number $\st{x}$ its \emph{standard part}. The
greatest number $a_{1}=:\omega(x)$ is called the \emph{order} of
$x$ and represents the greatest infinitesimal appearing in its decomposition.
When $x\in\R,$ i.e.,\ $x=\st{x}$, we set $\omega(x):=0$. We will
also use the notations $\omega_{i}(x):=a_{i}$ and $\st{x_{i}}:=\alpha_{i}$
for the $i$\emph{-th order} and the $i$\emph{-th standard part}
of $x$. The order $\omega(-)$ has the following expected properties:
\begin{align*}
\omega(x+y) & \le\max\{\omega(x),\omega(y)\}\\
\frac{1}{\omega(x\cdot y)} & =\frac{1}{\omega(x)}+\frac{1}{\omega(y)}\quad\text{if }\st{x}=\st{y}=0\text{ and }x\cdot y\neq0.
\end{align*}

Using the decomposition of $x$, it is not hard to prove that for
$k\in\N_{>1}$, $x^{k}=0$ iff $\omega(x)<k$. For $k\in\R_{\ge0}\cup\{\infty\}$,
the ideal
\[
D_{k}:=\left\{ x\in\FR\,|\,\st{x}=0,\ \omega(x)<k+1\right\} 
\]
plays a fundamental role in $k$-th order Taylor's formula with nilpotent
increments (so that the remainder is zero). Indeed, for $k\in\N_{\ge1}$
we have that $D_{k}=\left\{ x\in\FR\,|\,x^{k+1}=0\right\} $. We simply
write $D$ for $D_{1}.$ Every ordinary smooth function $f:A\ra\R$,
defined on an open set $A$ of $\R^{d}$, can be extended to a function
$\ext{f}:\ext{A}\ra\FR$ defined on
\[
\ext{A}:=\left\{ x\in\FR{}^{d}\,|\,\st{x}\in A\right\} ,
\]
preserving old values $f(x)\in\R$ for $x\in A$. Note that, for $A,B$
open sets of Euclidean spaces, we can naturally identify $\ext{(A\times B)}$
with $\ext{A}\times\ext{B}$, and hence there is no ambiguity to write
$\FR^{d}$ for $(\FR)^{d}$ or $\ext{(\R^{d})}$; see \citep[Thm. 19]{Gio10e}.
The mentioned Taylor's formula is therefore 
\begin{equation}
\forall h\in D_{k}^{d}:\ \ext{f}(x+h)=\sum_{\substack{j\in\N^{d}\\
|j|\le k
}
}\frac{h^{j}}{j!}\cdot\frac{\partial^{|j|}f}{\partial x^{j}}(x),\label{eq:TaylorStd}
\end{equation}
where $x\in A$, and $D_{k}^{d}=D_{k}\times\ptind^{d}\times D_{k}$.

There is a natural topology on $\FR^{d}$ consisting of sets of the
form $\ext{A}$ for open sets $A\subseteq\R^{d}$. We call this topology
the \emph{Fermat topology}, and open sets in the Fermat topology the
\emph{Fermat open sets}. Unless we specify otherwise, in the present
work we always equip every subset of $\FR^{d}$ with the Fermat topology
when viewed as a topological space.

It may seem difficult to work in a ring with zero divisors. But the
following properties permit to deal effectively with products of nilpotent
infinitesimals (typically appearing in Taylor's formula of several
variables) and with cancellation law:
\[
{\displaystyle {\displaystyle h_{1}^{i_{1}}\cdot\ldots\cdot h_{n}^{i_{n}}=0}\quad\iff\quad\sum_{k=1}^{n}\frac{i_{k}}{\omega(h_{k})}>1}
\]
\[
x\text{ is invertible}\quad\iff\quad\st{x}\ne0
\]
\[
\left(\text{If }x\cdot r=x\cdot s\text{ in }\FR,\text{ where }r,s\in\R\text{ and }x\ne0\right)\then r=s.
\]

The (commutative unital) ring $\FR$ is totally ordered, and the order
relation can be effectively decided starting from the decompositions
of elements as follows. For $x\in\FR$, if $\st{x}\ne0$, then $x>0\iff\st{x}>0$;
otherwise, if $\st{x}=0$, then $x>0\iff\st{x_{1}}>0$. 

For example, $\diff{t}_{3}-3\diff{t}>\diff{t}>0$, and $\diff{t_{a}}<\diff{t_{b}}$
if $a<b$. In the following, we will use symbols like $[a,b]:=\left\{ x\in\FR\mid a\le x\le b\right\} $
for intervals in $\FR$, whereas intervals in $\R$ will be denoted
by symbols like $[a,b]_{\R}:=[a,b]\cap\R$. By an\emph{ interval }in
$\FR$ we mean a set of the form $[a,b]$, $(a,b)$, $(a,b]$ or $[a,b)$,
where $a\leq b$. An interval is called\emph{ non-infinitesimal }if
$\st{a}<\st{b}$. Since $\leq$ is a total order on $\FR$, it is
easy to prove e.g. that $a=\inf(a,b)$, $b=\sup(a,b)$. Therefore,
each inteval of $\FR$ uniquely determines its endpoints.

The calculus we begin to develop in the present work concerns quasi-standard
smooth functions, which were introduced in \citep{Gio10e}. Recall
that a function $f:S\ra T$ with $S\subseteq\FR^{\sf s}$ and $T\subseteq\FR^{\sf t}$,
is called \emph{quasi-standard smooth} if locally (in the Fermat topology)
it can be written as 
\begin{equation}
f(x)=\ext{\alpha}(p,x)\quad\forall x\in\ext{V}\cap S,\label{eq:quasi-std-def}
\end{equation}
where $\alpha\in\Coo(P\times V,\R^{\sf t})$ is an ordinary smooth
function defined on an open set $P\times V\subseteq\R^{\sf p}\times\R^{\sf s}$,
and $p\in\ext{P}$ is some fixed $\sf p$-dimensional parameter\footnote{Note the different fonts used e.g.\ for the point $p\in P$ and the
dimension of $\R^{\sf p}\supseteq P$.}.\emph{ }Every quasi-standard smooth function is continuous when both
domain and codomain are equipped with the Fermat topology (see \citep[Thm. 13]{Gio10e}).
We write $\SFR(A,B)$ for the set of all quasi-standard smooth functions
from $A\subseteq\FR^{\sf a}$ to $B\subseteq\FR^{\sf b}$. Moreover,
when $A$ and $B$ are open sets of Euclidean spaces, $\SFR(A,B)=\FDiff(A,B)=\Coo(A,B)$
(see \citep[Thm. 18(4)]{Gio10e}). Therefore, the notions of \textquotedblleft ordinary
smooth function\textquotedblright{} (i.e., functions such that all
iterated partial derivatives exist and are continuous), of \textquotedblleft quasi-standard
smooth function\textquotedblright{} and of \textquotedblleft arrow
of the category $\FDiff$\textquotedblright{} (see \citep[Def. 16]{Gio10e})
are consistent. From now on, we call every such function a \emph{smooth
function}, and one can tell which kind it is from its domain and codomain.

In \citep{Gio10e}, the Fermat-Reyes theorem, which is essential for
the development of the differential calculus on $\FR$, was presented.
Unfortunately, there is a gap in formula (22) of \citep{Gio10e},
since nothing guarantees that the neighborhood $\ext{B}$ of $h$
is sufficiently small for that formula to hold. Nevertheless, it is
clear that we need to guarantee the validity of the Fermat-Reyes formula
(see \eqref{eq:FR} below) only for $h$ sufficiently small. In the
following definition, we precise the meaning of the sentence ``for
$h$ sufficiently small, the property $\mathcal{P}(h)$ holds''.
\begin{defn}
\label{def:for-h-suffSmall}Let $\mathcal{P}(h)$ be a property of
$h\in S\subseteq\FR$. We write 
\[
\forall^{0}h\in S:\ \mathcal{P}(h),
\]
and we read it as \emph{for $h\in S$ sufficiently small $\mathcal{P}(h)$
holds}, if
\[
\exists\rho\in\R_{>0}\,\forall h\in S:\left|h\right|<\rho\Rightarrow\mathcal{P}(h).
\]
Note explicitly that this notation also includes the special case
$S\subseteq\R$. Note also that, in a formula like $\forall x\,\forall^{0}h:\,\mathcal{P}(x,h)$,
if we say that $\mathcal{P}(x,h)$ holds for $|h|<\rho$, then $\rho$
depends on $x$.
\end{defn}
\noindent For example, if $U\subseteq\FR^{d}$ is a Fermat open set
and $v\in\FR^{d}$, then
\begin{equation}
\forall x\in U\,\forall^{0}h\in\FR:\ x+hv\in U.\label{eq:thickeningAndSmall-h}
\end{equation}
Using this language, the new statement of the Fermat-Reyes theorem
is the following. The above notation $\forall^{0}h$ is very convenient
whenever thickenings (see \citep[Lem.~23]{Gio10e}.) are implicit,
as in the following: 
\begin{thm}
\label{thm:Fermat-Reyes-h-small}Let $U$ be a Fermat open set of
$\FR$, and let $f:U\ra\FR$ be a smooth function. Then there exists
one and only one smooth map $r\in\FDiff(\widetilde{U},\FR)$ such
that
\begin{equation}
\forall(x,h)\in\widetilde{U}:\ f(x+h)=f(x)+h\cdot r(x,h)\quad\text{in}\quad\FR.\label{eq:FRwithThick}
\end{equation}
Without citing the thickening $\widetilde{U}$, we can state \eqref{eq:FRwithThick}
as
\begin{equation}
\forall x\in U\,\forall^{0}h\in\FR:\ f(x+h)=f(x)+h\cdot r(x,h)\quad\text{in}\quad\FR.\label{eq:FR}
\end{equation}
We can thus define $f^{\prime}(x):=r(x,0)\in\FR$ for every $x\in U$.
Moreover, if $f(x)=\ext{\alpha}(p,x)$, $\forall x\in\ext{V}\subseteq U$
with $\alpha\in\Coo(P\times V,\R)$, then
\end{thm}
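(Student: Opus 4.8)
The plan is to deduce the statement from the classical Hadamard lemma applied to the local expression of $f$, to propagate the resulting identity through the extension functor $\ext{(-)}$, and to glue the local incremental ratios into a single smooth map on $\widetilde{U}$; uniqueness I would settle separately by a cancellation argument. Concretely, around any point of $U$ write $f(y)=\ext{\alpha}(p,y)$ for $y\in\ext{V}$, with $\alpha\in\Coo(P\times V,\R)$ and $p\in\ext{P}$. The ordinary Hadamard lemma applied to $\alpha$ in its $V$-variable gives a smooth function
\[
\beta(q,y,k):=\int_{0}^{1}\frac{\partial\alpha}{\partial y}(q,y+tk)\diff{t},
\]
defined wherever the segment $\{y+tk\mid t\in[0,1]_{\R}\}$ remains in $V$, and satisfying $\alpha(q,y+k)-\alpha(q,y)=k\cdot\beta(q,y,k)$ as an identity of ordinary smooth functions. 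Setting $r(x,h):=\ext{\beta}(p,x,h)$ and applying $\ext{(-)}$ to this identity — using that the functor commutes with sums, products and composition with smooth maps (in particular the affine shift $y\mapsto y+k$), together with $\ext{\alpha}(p,x)=f(x)$ — produces $f(x+h)=f(x)+h\cdot r(x,h)$. Then $f'(x):=r(x,0)=\ext{\beta}(p,x,0)$ equals $\ext{(\partial_{y}\alpha)}(p,x)$, since $\beta(q,y,0)=\frac{\partial\alpha}{\partial y}(q,y)$, which is the content of the final \emph{Moreover} clause.

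The delicate point — exactly the gap in formula (22) of \citep{Gio10e} — is that the displayed identity is valid only when the \emph{whole} segment joining $x$ to $x+h$ lies in $\ext{V}$, equivalently when $(x,h)$ lies in the thickening $\widetilde{\ext{V}}$; this is what the quantifier prefix $\forall x\,\forall^{0}h$ records, by \eqref{eq:thickeningAndSmall-h} and the definition of the thickening, and it is why the two formulations \eqref{eq:FRwithThick} and \eqref{eq:FR} are interchangeable. Since $\widetilde{U}$ is covered by thickenings of charts $\ext{V}$ and quasi-standard smooth functions form a sheaf for the Fermat topology, the locally defined ratios glue to a global $r\in\FDiff(\widetilde{U},\FR)$, once we know they agree on overlaps — which is the uniqueness below.

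For uniqueness, if $r_{1},r_{2}$ both satisfy the defining equation then $h\cdot\bigl(r_{1}(x,h)-r_{2}(x,h)\bigr)=0$ throughout $\widetilde{U}$. On the locus where $h$ is non-infinitesimal — hence invertible, since invertibility is equivalent to $\st{h}\ne0$ — cancellation gives $r_{1}=r_{2}$, and this locus is Fermat-dense in $\widetilde{U}$. The main obstacle is precisely to pass from this dense locus to all of $\widetilde{U}$: Fermat-continuity alone does \emph{not} suffice, because $\{0\}$ is not Fermat-closed (its closure is the set of infinitesimals), so a continuous function vanishing on a dense set is only forced to be infinitesimal, not zero. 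One therefore has to invoke the cancellation law for non-infinitesimal functions \citep[Lem.~25]{Gio10e} to conclude $r_{1}=r_{2}$ identically, in particular at $h=0$, so that $f'(x)$ is well defined. This cancellation step — not the existence construction — is where the real work lies.
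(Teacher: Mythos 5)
Your handling of the infinitesimal case and of uniqueness agrees with the paper: for $\st{h}=0$ the paper also applies the Hadamard/integral formula to a local representation $\ext{\alpha}(p,-)$, with the thickening condition ensuring the segment stays inside the chart (this is precisely its repair of formula (22) of \citep{Gio10e}), and uniqueness is settled by the cancellation law for non-infinitesimal functions, exactly as you argue. The genuine gap is in your gluing step for existence. You assert that ``$\widetilde{U}$ is covered by thickenings of charts $\ext{V}$,'' and this does \emph{not} follow from the definition of quasi-standard smoothness. A point $(x,h)\in\widetilde{U}$ with $\st{h}\ne 0$ only requires the segment from $x$ to $x+h$ to lie in $U$; its standard part is the non-degenerate interval with endpoints $\st{x}$ and $\st{x}+\st{h}$, and nothing prevents every chart furnished by the definition of $f$ from being much smaller than this interval. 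For instance, with $U=\ext{\left\{(0,3)_{\R}\right\}}$ and charts of radius $\tfrac12$, the point $\left(\tfrac12+\diff{t},\,2\right)$ lies in $\widetilde{U}$ but in no thickening $\widetilde{\ext{V}}$ of any chart; consequently your locally defined ratios $\ext{\beta}(p,x,h)$ do not have $\widetilde{U}$ as the union of their domains, and the sheaf property cannot produce $r$ on all of $\widetilde{U}$. (The covering claim can in fact be rescued by proving that $f$ admits a \emph{single} representation $\ext{\gamma}(q,-)$ on $\ext{W}$ for any relatively compact open $W$ containing the segment's standard part --- a partition-of-unity argument --- but that lemma is nontrivial and appears nowhere in your text.)

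The paper closes exactly this hole by a two-case split that your proposal is missing. For $\st{h}=0$ the segment has a single standard point, so it automatically lies in one chart and your construction applies verbatim. For $\st{h}\ne 0$ the element $h$ is invertible, and one defines $r(x,h):=\frac{f(x+h)-f(x)}{h}$ directly on a Fermat neighborhood $\ext{(A\times B)}\subseteq\widetilde{U}$ of $(x,h)$ with $0\notin B$; no chart containment of the segment is needed at all. These local pieces agree on overlaps (where $h$ is invertible, by plain cancellation; in general by the same Lem.~25 argument you invoke), and only then does the sheaf property in $\FDiff$ yield the global $r$. Your closing assessment that ``the cancellation step --- not the existence construction --- is where the real work lies'' therefore has the emphasis backwards: your uniqueness argument is sound, but the existence construction as written fails to define $r$ on a substantial part of $\widetilde{U}$.
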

\[
f'(x)={}^{^{^{{\scriptstyle \bullet}}}}{\!\left(\frac{\partial\alpha}{\partial x}\right)}(p,x)\quad\text{in}\quad\FR.
\]

\begin{proof}
Take $(x,h)\in\widetilde{U}$. We first consider the case $\st{h}=0$.
Since $x\in U$, we can write $f|_{\mathcal{V}_{x}}=\ext{\alpha_{x}}(p_{x},-)|_{\mathcal{V}_{x}}$,
where $\alpha_{x}\in\Coo(P_{x}\times\bar{V}_{x},\R)$, $\mathcal{V}_{x}:=\ext{\bar{V}_{x}}\subseteq U$
is a Fermat open neighborhood of $x$ and $\ext{P_{x}}$ is a Fermat
open neighborhood of some fixed parameter $p_{x}\in\FR^{{\sf p}_{x}}$.
Set $T_{x}:=\widetilde{\ext{\bar{V}}_{x}}$ for the thickening of
$\ext{\bar{V}}_{x}$. Since $T_{x}$ is a Fermat open subset of $\FR^{2}$
(see \citep[Lem.~23]{Gio10e}), we can find two open subsets $A$,
$B$ of $\R$ such that $\ext{(A\times B)}\subseteq T_{x}$, $\st{x}\in A$
and $\st{h}=0\in B$. Since $\widetilde{U}$ is also a Fermat open
set, without loss of generality, we can assume that $\ext{(A\times B)}\subseteq\widetilde{U}$.
By definition of thickening, we have $a+s\cdot b\in\bar{V}_{x}$ for
all $a\in A$, $b\in B$ and $s\in[0,1]_{\R}$. This proves correctly
property (22) of \citep{Gio10e}. From this point on, we can continue
like in \citep{Gio10e}, proving that we can define a smooth function
$r:\ext{(A\times B)}\ra\FR$ that satisfies \eqref{eq:FRwithThick}
in its domain. If $\st{h}\ne0$, then $h$ is invertible, and hence
we can simply define $r(x,h):=\frac{f(x+h)-f(x)}{h}$ in a neighborhood
$\ext{(A\times B)}\subseteq\widetilde{U}$ of $(x,h)$ such that $0\notin B$.
Using the sheaf property in $\FDiff$, exactly like in \citep{Gio10e},
we have the conclusion.
\end{proof}
Here is a simple application. If $g:\R\ra\R$ is a smooth function,
then $(\ext{g})'(x)=\ext{(g')}(x)$. The Fermat-Reyes theorem hence
implies that the function $\R^{2}\ra\R$ defined by

\[
(s,t)\mapsto\begin{cases}
\frac{g(s+t)-g(s)}{t}, & \text{if }t\neq0\\
g'(s), & \text{if }t=0
\end{cases}
\]
is smooth.

Note that the Fermat-Reyes theorem permits to differentiate smooth
functions only at interior points (in the Fermat topology).

\section{\label{sec:A-uniqueness-theorem}A uniqueness theorem}

Before discussing integration of quasi-standard smooth functions,
we prove in this section a very useful uniqueness theorem.

In order to state the theorem, we need another topology on Fermat
reals, called the $\omega$-topology. We summarize the basics of the
$\omega$-topology; see \citep{Gi-Ku13} for more details. 

Let $d_{\omega}:\FR^{n}\times\FR^{n}\ra\R_{\geq0}$ be defined by
$d_{\omega}(x,y)=\|\st{x}-\st{y}\|+\sum_{i=1}^{n}\omega(x_{i}-y_{i})$.
Then $d_{\omega}$ is a complete metric on $\FR^{n}$, and the topology
induced by this metric is called the \emph{$\omega$-topology},\emph{
}which is strictly finer than the Fermat topology. For any $x\in\FR^{n}$
and any $s\in(0,1]_{\R}$, every element in the open ball $B_{s}(x;d_{\omega})$
is of the form $x+r$ for $r\in\R^{n}$ with $\|r\|<s$ (see \citep[Thm. 7, 8, 11]{Gi-Ku13}).
We will denote by $A_{\omega}$ the topological space given by a subset
A of $\FR^{n}$ with the $\omega$-topology. Note that for $U$, $V$
open sets of Euclidean spaces, not every ordinary smooth function
$f:U\ra V$ induces a continuous function $\ext{f}:\ext{U}_{\omega}\ra\ext{V}_{\omega}$.
For example, if $f:\R\ra\R$ is the square function $f(x)=x^{2}$,
then $(\ext{f})^{-1}(B_{1}(\diff{t};d_{\omega}))=:X$ is not open
in $\FR_{\omega}$. In fact, one can check that $\diff{t_{2}}\in X$,
but no $\omega$-open neighborhood of it is contained in $X$.
\begin{rem}
\ 
\begin{enumerate}[leftmargin=*,label=(\roman*),align=left ]
\item The set of all invertible elements in $\FR$ is open and dense in
both $\FR_{\omega}$ and $\FR$, where the latter has the Fermat topology.
\item The ideal $D_{\infty}$ is closed in both $\FR_{\omega}$ and $\FR$.
More generally, every ideal in $\FR$ is closed in $\FR_{\omega}$,
but only the (unique) maximal (prime) ideal $D_{\infty}$ is closed
in $\FR$.
\item The sub-topologies on $\R^{n}$ via the embeddings $\R^{n}\ra\FR^{n}$
and $\R^{n}\ra\FR_{\omega}^{n}$ both coincide with the Euclidean
topology.
\end{enumerate}
\end{rem}
The following lemma is a trivial consequence of the uniqueness of
decomposition of Fermat reals:
\begin{lem}
\label{lem:quasi-decomposition}Assume that $x\in\FR$ can be written
as $x=r+\sum_{i=1}^{N}\alpha_{i}\cdot\diff{t}_{a_{i}}$, where $N\in\N$,
$r$, $\alpha_{1},a_{1},\ldots,\alpha_{N},a_{N}\in\R$ and $a_{1}>\ldots>a_{N}\ge1$.
Then $x=0$ implies $r=0$ and $\alpha_{i}=0$ for all $i=1,\ldots,N$.\end{lem}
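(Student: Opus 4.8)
The plan is to reduce the given ``quasi-decomposition'' to a genuine decomposition in the sense of \eqref{eq:decomposition} and then invoke the uniqueness of that decomposition. The only feature distinguishing the hypothesis from \eqref{eq:decomposition} is that here the coefficients $\alpha_{i}$ are allowed to vanish, whereas in \eqref{eq:decomposition} one requires $\alpha_{i}\neq0$; so the single idea of the argument is to discard the zero coefficients before applying uniqueness.

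Concretely, first I would set $I:=\{\,i\in\{1,\dots,N\}\mid\alpha_{i}\neq0\,\}$ and rewrite $x=r+\sum_{i\in I}\alpha_{i}\cdot\diff{t}_{a_{i}}$. Because the orders $a_{i}$ with $i\in I$ are still strictly decreasing and satisfy $a_{i}\ge1$, and the remaining coefficients are nonzero by construction, this is precisely the decomposition \eqref{eq:decomposition} of $x$, whose standard part is therefore $\st{x}=r$.

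Next I would assume $x=0$ and compare with the decomposition of $0$, which is the trivial one: standard part $0$ and an empty infinitesimal sum. Matching standard parts gives $r=\st{0}=0$, and the uniqueness of the orders and coefficients in \eqref{eq:decomposition} forces $I=\emptyset$, since any surviving term $\alpha_{i}\cdot\diff{t}_{a_{i}}$ with $\alpha_{i}\neq0$ would be a genuine infinitesimal contribution absent from the decomposition of $0$. Hence $\alpha_{i}=0$ for every $i\in\{1,\dots,N\}$, as claimed.

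There is no real obstacle here: the statement is, as the text announces, a direct corollary of uniqueness. The only point requiring a moment's care is precisely that the hypothesis permits $\alpha_{i}=0$, which would otherwise contradict the standing requirement $\alpha_{i}\neq0$ in \eqref{eq:decomposition}; passing to the index set $I$ of nonzero coefficients is exactly what makes the uniqueness of decomposition applicable.
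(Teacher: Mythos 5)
Your proposal is correct and takes essentially the same route as the paper's own proof: both arguments discard the vanishing coefficients to turn the quasi-decomposition into a genuine decomposition in the sense of \eqref{eq:decomposition} and then invoke its uniqueness, the only (immaterial) difference being that the paper first reads off $r=\st{x}=0$ and then applies uniqueness to the remaining purely infinitesimal sum, whereas you match the full decomposition of $x$ against that of $0$ in one step.
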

\begin{proof}
Since $r=\st{x}=0$, we have the first part of the conclusion. If
$N=0$, the second part is trivial. Otherwise, $x=\sum_{\substack{i=1,\ldots,N\\
\alpha_{i}\ne0
}
}\alpha_{i}\cdot\diff{t}_{a_{i}}$ is the decomposition of $x$, and the conclusion then follows from
the uniqueness of the decomposition.
\end{proof}
\noindent A writing of the form $x=r+\sum_{i=1}^{N}\alpha_{i}\cdot\diff{t}_{a_{i}}$
satisfying the conditions of this lemma (compared to the decomposition
of $x$, we allow $\alpha_{i}$'s to be $0$) will be called a \emph{quasi-decomposition
of $x$.}

Here is the uniqueness theorem:
\begin{thm}
\label{thm:uniqueness} Let $A$ be an arbitrary subset of $\FR^{n}$,
and let $f,g:A\ra\FR$ be smooth functions. If $f(x)=g(x)$ for all
$x$ in a dense subset of $A_{\omega}$, then $f(x)=g(x)$ for all
$x\in A$.\end{thm}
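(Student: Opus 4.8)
The plan is to reduce the global equality $f=g$ on $A$ to a local, pointwise statement that can be attacked using the defining local expression \eqref{eq:quasi-std-def} of quasi-standard smooth functions together with the density hypothesis in the $\omega$-topology. Fix an arbitrary point $x_0\in A$; I want to show $f(x_0)=g(x_0)$. By the definition of quasi-standard smoothness, there is a Fermat open neighborhood $\ext{V}$ of $x_0$ and ordinary smooth functions $\alpha,\beta$ with parameters $p,q$ such that $f=\ext{\alpha}(p,-)$ and $g=\ext{\beta}(q,-)$ on $\ext{V}\cap A$. The difference $\phi:=f-g$ is again smooth on $\ext{V}\cap A$ and vanishes on $\ext{V}\cap D$, where $D$ denotes the given $\omega$-dense subset of $A$ on which $f$ and $g$ agree. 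So the whole problem localizes to: \emph{a smooth function that vanishes on an $\omega$-dense set vanishes everywhere}.

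\textbf{Exploiting the structure of $\omega$-open balls.}
The key structural fact, recalled in the paragraph preceding the theorem (from \citep[Thm. 7, 8, 11]{Gi-Ku13}), is that every element of an $\omega$-ball $B_s(x;d_\omega)$ has the form $x+r$ with $r\in\R^n$ and $\|r\|<s$. This is exactly what makes the $\omega$-topology tractable here: moving within an $\omega$-neighborhood of $x_0$ only changes the \emph{standard-real} coordinates, leaving the infinitesimal structure (the $\diff{t}_{a_i}$ terms) fixed. Writing $x_0=s_0+\nu$ with $s_0=\st{x_0}$ and $\nu$ the nilpotent part, the $\omega$-density of $D$ near $x_0$ should give me, for arbitrarily small $s$, points of $D$ of the form $s_0+r+\nu'$ approximating $x_0$; combined with the ball description, this pins down a set of standard reals accumulating at $s_0$ along which $\phi$ vanishes. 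First I would unwind precisely what $\omega$-density buys me in terms of these real perturbations.

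\textbf{From vanishing on a dense real set to vanishing everywhere.}
Having reduced to the local expressions, I would substitute $x=s_0+\nu$ into $\phi(x)=\ext{\alpha}(p,x)-\ext{\beta}(q,x)$ and expand via Taylor's formula with nilpotent increments \eqref{eq:TaylorStd}: because $\nu\in D_k^n$ for a suitable finite order $k$, the extension evaluated at $s_0+\nu$ becomes a \emph{finite} polynomial in the infinitesimal generators $\diff{t}_{a_i}$ with coefficients that are ordinary smooth functions of the standard parts. The $\omega$-density, via the ball description, forces these standard-real coefficient functions to vanish on a set of reals that is dense near $s_0$; by continuity of the ordinary smooth derivatives they vanish on a whole real neighborhood of $s_0$. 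Finally I would invoke Lemma~\ref{lem:quasi-decomposition}: once $\phi(x)$ is written as a quasi-decomposition $r+\sum_i\alpha_i\diff{t}_{a_i}$ with each real coefficient equal to zero, uniqueness of decomposition gives $\phi(x)=0$, i.e. $f(x_0)=g(x_0)$.

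\textbf{Main obstacle.}
The routine parts are the localization and the final appeal to Lemma~\ref{lem:quasi-decomposition}. The delicate step is organizing the Taylor expansion so that the vanishing of $\phi$ on an $\omega$-dense set translates cleanly into the vanishing of each standard-real coefficient in the quasi-decomposition. The subtlety is that the two local representations $\ext{\alpha}(p,-)$ and $\ext{\beta}(q,-)$ may carry \emph{different} parameter dimensions and neighborhoods, and the order $k$ controlling the nilpotent expansion depends on $x_0$; I must be careful that the finitely many infinitesimal monomials $\diff{t}_{a_i}$ appearing are treated as genuinely independent by Lemma~\ref{lem:quasi-decomposition}, and that the $\omega$-dense points I select actually realize each needed real coefficient. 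Pinning down this independence and the matching of orders, rather than any hard estimate, is where the real work lies.
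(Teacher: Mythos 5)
Your proposal follows essentially the same route as the paper's proof: localize at a point $x_{0}\in A$, use the structural fact that the $\omega$-ball $B_{s}(x_{0};d_{\omega})$ consists of the real translates $x_{0}+r$ to turn $\omega$-density into real perturbations on which $f-g$ vanishes, Taylor-expand with nilpotent increments (in the infinitesimal parts of the point \emph{and} of the parameters) so as to write the value as a quasi-decomposition whose coefficients are ordinary smooth functions of the real perturbation $r$, and conclude via Lemma~\ref{lem:quasi-decomposition} and continuity of those coefficients. This is exactly the paper's argument, including the care you flag about gathering equal exponents $\diff{t}_{w_{\ell}}$ before invoking the lemma.

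One intermediate step is overstated and would fail as written. Since $A$ is an \emph{arbitrary} subset of $\FR^{n}$, the set $\left\{ r\in\R^{n}\mid x_{0}+r\in D\right\} $ (where $D$ is the $\omega$-dense set) is only guaranteed to accumulate at $0$: picking points of $D$ in $B_{1/v}(x_{0};d_{\omega})$ yields a sequence $r(v)\to0$, nothing more. It need not be dense in any real neighborhood of $0$; for instance, take $A=\{x_{0}\}\cup\left\{ x_{0}+\tfrac{1}{v}e_{1}\mid v\in\N_{>0}\right\} $ with $D=A\setminus\{x_{0}\}$. So you cannot conclude that the coefficient functions ``vanish on a whole real neighborhood of $s_{0}$''. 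Fortunately this stronger claim is unnecessary: vanishing of the coefficients $C_{\ell}$ along the single sequence $r(v)\to0$ (which Lemma~\ref{lem:quasi-decomposition} gives you pointwise at each $r(v)$), together with continuity of each $C_{\ell}$, already yields $C_{\ell}(0)=0$ and hence $f(x_{0})=g(x_{0})$ — this is precisely how the paper closes the argument.
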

\begin{proof}
We may assume that $g$ is the zero function. For any $a\in A$ there
exist an open neighborhood $V$ of $\st{a}$ in $\R^{n}$, an open
set $P\subseteq\R^{\sf p}$, a fixed parameter $p\in\ext{P}$, and
an ordinary smooth function $\alpha:P\times V\ra\R$ such that $f(x)=\ext{\alpha}(p,x)$
for all $x\in\ext{V}\cap A$. 

Let $p=(p_{1},\ldots,p_{\sf p})\in\FR^{\sf p}$ and $a=(a_{1},\ldots,a_{n})\in\FR^{n}$
be the components of $p$ and $a$. Let $p_{l}=\st{p_{l}}+\sum_{i=1}^{N_{l}}p_{il}\diff{t}_{q_{il}}=:\st{p_{l}}+k_{l}$,
$l=1,\ldots,\sf p$, and $a_{m}=\st{a_{m}}+\sum_{\iota=1}^{M_{m}}a_{\iota m}\diff{t}_{b_{\iota m}}=:\st{a_{m}}+h_{m}$,
$m=1,\ldots,n$, be the decompositions of these components with $k_{l}$,
$h_{m}$ their infinitesimal parts. For $r\in\R^{n}$ such that $\Vert r\Vert<1$
and $a+r\in\ext{V}\cap A$, we can write
\[
f(a+r)=\ext{\alpha}(p,a+r)=\ext{\alpha}(\st{p_{1}}+k_{1},\ldots,\st{p_{\sf p}}+k_{\sf p},\st{x_{1}}+h_{1},\ldots,\st{x_{n}}+h_{n}),
\]
where $x_{m}=\st{a_{m}}+r_{m}$ for $m=1,\ldots,n$. We show below
that the coefficients in a quasi-decomposition of $f(a+r)$ are ordinary
smooth functions of $r$. From the above decomposition, we have $k_{l}\in D_{q_{1l}}$
and $h_{m}\in D_{b_{1m}}$. Thereby, denoting by $\lceil u\rceil\in\N$
the ceiling of $u\in\R$ and setting $k:=(k_{1},\ldots,k_{\sf p})$,
$h:=(h_{1},\ldots,h_{n})$ and $c:=\max_{\substack{l=1,\ldots,\sf p\\
m=1,\ldots,n
}
}(\lceil q_{1l}\rceil,\lceil b_{1m}\rceil)$, we have that $(k,h)\in D_{c}^{{\sf p}+n}$. Using Taylor's formula
\eqref{eq:TaylorStd} for the smooth function $\alpha$ at the point
$(\st{p},\st{a}+r)$ and with increments $(k,h)$, we obtain
\begin{equation}
f(a+r)=\ext{\alpha}(p,a+r)=\sum_{j\in J}\partial_{j}\alpha(\st{p},\st{a}+r)\cdot\frac{(k,h)^{j}}{j!},\label{eq:Tay-r}
\end{equation}
where $J$ is the set of all the multi-indices $j\in\N^{{\sf p}+n}$
such that $|j|\le c$. Since we have that
\[
(k,h)^{j}=\prod_{l=1}^{\sf p}k_{l}^{j_{l}}\cdot\prod_{m=1}^{n}h_{m}^{j_{{\sf p}+m}}=\prod_{l=1}^{\sf p}\left(\sum_{i=1}^{N_{l}}p_{il}\diff{t}_{q_{il}}\right)^{j_{l}}\cdot\prod_{m=1}^{n}\left(\sum_{\iota=1}^{M_{m}}a_{\iota m}\diff{t}_{b_{\iota m}}\right)^{j_{{\sf p}+m}},
\]
i.e., a finite product of elements in the ideal $D_{c}$, the decomposition
of $(k,h)^{j}$ can be written as
\begin{equation}
(k,h)^{j}=\sum_{e=1}^{K_{j}}\gamma_{je}\cdot\diff{t}_{s_{je}}\label{eq:decPow}
\end{equation}
for some constants $K_{j}\in\N$, $\gamma_{je}\in\R$ and $s_{je}\in\R_{\geq1}$
(depending on the fixed datum $a$ and $f$, of course). For simplicity,
we can also use \eqref{eq:decPow} for $j=0\in\N^{{\sf p}+n}$ if
we set $K_{j}=\gamma_{je}=1$, $s_{je}=0$ and $\diff{t_{0}}=1$.
Substituting \eqref{eq:decPow} into \eqref{eq:Tay-r}, we get
\[
f(a+r)=\ext{\alpha}(p,a+r)=\sum_{j\in J}\sum_{e=1}^{K_{j}}\partial_{j}\alpha(\st{p},\st{a}+r)\cdot\frac{1}{j!}\cdot\gamma_{je}\cdot\diff{t_{s_{je}}}.
\]
To arrive at a quasi-decomposition of $f(a+r)$, it remains to gather
all the summands having the same term $\diff{t}_{s_{je}}$. Set
\[
L:=\left\{ s_{je}\mid j\in J\ ,\ e=1,\ldots,K_{j}\right\} ,
\]
let $\left\{ w_{1},\ldots,w_{H}\right\} :=L$ be an enumeration of
$L$ such that $w_{1}>\ldots>w_{H}$, and finally, for each $\ell=1,\ldots,H$,
set
\begin{multline*}
C_{\ell}(r):=\\
\sum\left\{ \partial_{j}\alpha(\st{p},\st{a}+r)\cdot\frac{1}{j!}\cdot\gamma_{je}\mid s_{je}=w_{\ell}\ ,\ j\in J\ ,\ e=1,\ldots,K_{j}\right\} .
\end{multline*}
Then
\begin{equation}
f(a+r)=\sum_{\ell=1}^{H}C_{\ell}(r)\cdot\diff{t}_{w_{\ell}}\label{eq:f(a+r)}
\end{equation}
is a quasi-decomposition of $f(a+r)$, and each $C_{\ell}(-)$ is
an ordinary smooth function of $r$ in a neighborhood of $0\in\R^{n}$.
The assumption implies that we can find a real sequence $(r(v))_{v\in\N}\downarrow0$
such that $a+r(v)\in\ext{V}\cap A$ and $f(a+r(v))=0$ for all $v\in\N$.
Therefore, $\sum_{\ell=1}^{H}C_{\ell}(r(v))\cdot\diff{t}_{w_{\ell}}=0$
for all $v$. Lem.~\ref{lem:quasi-decomposition} yields that $C_{\ell}(r(v))=0$
for all $\ell$, and hence $C_{\ell}(0)=0$ by the continuity of $C_{\ell}(-)$.
Setting $r=0$ in \eqref{eq:f(a+r)}, we get the conclusion that $f(a)=0$.
\end{proof}
This theorem will be used frequently in the following sections.

\section{\label{sec:The-constant-function}The constant function theorem}

Our first problem concerns the existence and uniqueness of primitives
of smooth functions $f:[a,b]\ra\FR$, and hence the starting of an
integration theory. We will solve this problem in next section, and
in this section, we first prove a useful instrument for solving this
problem, namely the constant function theorem. 

In the following, we use simplified symbols like $[a,b]$ also to
denote the corresponding subspace of $\FR$ in the category $\FDiff$,
i.e., for $\left([a,b]\prec\FR\right)$ (see \citep{Gio10e} for the
notation $(X\prec Y)$). We set $\extFR:=\FR\cup\{+\infty,-\infty\}$
with the usual operations, and $\st{a}=a$ if $a\in\{\pm\infty\}$,
even if we will never consider intervals closed at $-\infty$ or $+\infty$.

It is not hard to prove that if $a$, $b\in\R$, then
\begin{align*}
\ext{\left\{ (a,b)_{\R}\right\} } & \subsetneqq(a,b)\\
\ext{\left\{ [a,b]_{\R}\right\} } & \subsetneqq[a,b].
\end{align*}
For example, $x_{t}:=a-t^{2}$ is equal to $a$ in $\FR$, and hence
it belongs to the interval $[a,b]$, but $x\notin\ext{\left\{ [a,b]_{\R}\right\} }$
because $x$ does not map $\R_{\ge0}$ into $[a,b]_{\R}$. Analogously,
$a+\diff{t}\in(a,b)$ but $\st{(a+\diff{t})}=a\notin(a,b)_{\R}$,
so $a+\diff{t}\notin\ext{\left\{ (a,b)_{\R}\right\} }$. This implies
that the interval $(a,b)$, with $a$, $b\in\extFR$, is not open
in the Fermat topology if at least one of the endpoints is finite,
and thus we are initially forced to consider derivatives only at its
interior points, where
\begin{multline*}
\text{int}\left\{ \ext{\left\{ (a,b)_{\R}\right\} }\right\} =\text{int}\left\{ \ext{\left\{ [a,b]_{\R}\right\} }\right\} =\text{int}[a,b]=\text{int}(a,b)\\
=\{x\in\FR\mid\st{a}<\st{x}<\st{b}\}.
\end{multline*}
Here, of course, $\text{int}(A)$ denotes the interior of $A$ in
the Fermat topology, and we have used simplified notations like $\text{int}[a,b]=\text{int}\left\{ [a,b]\right\} $. 

As a second step, we will define the notion of right and left derivative
at $a$ and $b$, respectively, at the end of this section.

The following lemma is needed for the constant function theorem:
\begin{lem}
\label{lem:intervalsConnected}Let $J$ be an interval of $\FR$.
Then both the interval $J$ and its interior $\text{int}(J)$ are
connected in the Fermat topology.\end{lem}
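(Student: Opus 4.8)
The plan is to exploit the fact that the Fermat topology on $\FR$ is precisely the initial (pullback) topology induced by the standard part map $\st{(-)}:\FR\ra\R$. Indeed, every Fermat open set has the form $\ext{A}=\{x\in\FR\mid\st{x}\in A\}=\st{(-)}^{-1}(A)$ for $A$ open in $\R$, so for any subset $S\subseteq\FR$ the Fermat subspace topology on $S$ consists exactly of the sets $\ext{A}\cap S=(\st{(-)}|_S)^{-1}(A)$. Writing $\st{S}:=\{\st{x}\mid x\in S\}\subseteq\R$ for the image, this says that $S$ carries the initial topology with respect to the surjection $\st{(-)}|_S:S\ra\st{S}$, the target being given the Euclidean topology.

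With this observation in hand, I would first record the elementary topological lemma: \emph{if a space $X$ carries the initial topology with respect to a surjection $\pi:X\ra Y$ onto a connected space $Y$, then $X$ is connected.} The proof is the standard one: a clopen partition $X=U\sqcup U'$ into nonempty opens must be of the form $U=\pi^{-1}(V)$, $U'=\pi^{-1}(V')$, and the surjectivity of $\pi$ transports this to a partition $Y=V\sqcup V'$ into nonempty disjoint opens, contradicting connectedness of $Y$.

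It then remains to check that the relevant images are connected subsets of $\R$. For an interval $J$ with endpoints $a\le b$, I claim $\st{J}$ is a real interval, hence connected. On one hand $x\in J$ forces $a\le x\le b$ and hence $\st{a}\le\st{x}\le\st{b}$ (since $x-a\ge0$ gives $\st{x}-\st{a}\ge0$ by the order rule on $\FR$, and symmetrically at $b$), so $\st{J}\subseteq[\st{a},\st{b}]_\R$. On the other hand, every real $r$ with $\st{a}<r<\st{b}$ lies in $J$: its order relation against $a$ and $b$ in $\FR$ is decided by standard parts, so $a<r<b$ and thus $r\in(a,b)\subseteq J$; whence $(\st{a},\st{b})_\R\subseteq\st{J}$. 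Thus $\st{J}$ is squeezed between an open and a closed real interval with the same endpoints, so it is connected, and applying the lemma to $\st{(-)}|_J:J\ra\st{J}$ shows $J$ is connected. For the interior, the excerpt already identifies $\text{int}(J)=\{x\in\FR\mid\st{a}<\st{x}<\st{b}\}=\st{(-)}^{-1}\big((\st{a},\st{b})_\R\big)$, and $\st{(-)}$ restricts to a surjection of this set onto the connected open interval $(\st{a},\st{b})_\R$ (again because each such standard real is its own preimage point), so the same lemma yields connectedness of $\text{int}(J)$.

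The main point to handle with care is the first one, namely the identification of the Fermat subspace topology on $J$ with the initial topology along the standard part map, since everything else reduces to the soft topological lemma once that is in place. The only genuinely interval-theoretic input is the sandwiching $(\st{a},\st{b})_\R\subseteq\st{J}\subseteq[\st{a},\st{b}]_\R$, which must be verified uniformly for the four interval types $[a,b]$, $(a,b)$, $(a,b]$, $[a,b)$, and which degenerates harmlessly to a single point or the empty set in the infinitesimal case $\st{a}=\st{b}$, both of which are connected.
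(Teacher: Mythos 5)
Your proposal is correct and takes essentially the same route as the paper: both arguments push a hypothetical disconnection of $J$ (or of $\text{int}(J)$) down to the set of standard parts via $\st{(-)}$ and contradict the connectedness of a real interval in $\R$. The only difference is packaging — you abstract the transfer step into a general lemma on initial topologies and surjections, and you sandwich $(\st{a},\st{b})_{\R}\subseteq\st{J}\subseteq[\st{a},\st{b}]_{\R}$ where the paper computes the image $\st{J}=[\st{a},\st{b}]_{\R}$ exactly — but the underlying computation (images of the separating Fermat open sets are disjoint, nonempty, relatively open, and cover) is identical.
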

\begin{proof}
Let $a$, $b$ be the endpoints of the interval $J$. We proceed for
$a$, $b$ finite and for the interval $(a,b)$, because the proof
is similar for the other cases. Assume that there exist two non-empty
relatively Fermat open sets $U_{1}$, $U_{2}$ of $(a,b)$ such that
$(a,b)=U_{1}\cup U_{2}$ and $U_{1}\cap U_{2}=\emptyset$. Set $I:=\{\st{x}\mid x\in(a,b)\}$.
It is not hard to see that $I=[\st{a},\st{b}]_{\R}\ne\emptyset$.
Moreover, setting $V_{i}:=\{\st{x}\mid x\in U_{i}\}$ for $i=1,2$,
we have that $V_{i}$ is a non-empty relatively open set of $I$,
$I=V_{1}\cup V_{2}$ and $V_{1}\cap V_{2}=\emptyset$, which implies
the conclusion since $I$ is connected in $\R$. The proof for the
case of the interior is similar since $\text{int}(a,b)=\{x\in\FR\mid\st{a}<\st{x}<\st{b}\}$.
\end{proof}
Here is the \emph{constant function theorem}, which is a stepstone
for the existence and uniqueness of primitives for smooth functions.
\begin{thm}
\noindent \label{thm:constancyPrinciple}Let $J$ be a non-infinitesimal
interval of $\FR$. Let $f:J\ra\FR$ be a smooth function such that
$f'(x)=0$ for each $x\in\text{int}(J)$. Then $f$ is constant on
$J$.\end{thm}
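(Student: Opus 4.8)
The plan is to prove the statement in three stages: local constancy on the Fermat-open interior $\text{int}(J)=\{x\in\FR\mid\st{a}<\st{x}<\st{b}\}$ (nonempty and Fermat-open precisely because $J$ is non-infinitesimal, so $\st{a}<\st{b}$), then a globalisation over $\text{int}(J)$ by connectedness, and finally an extension to the (possibly closed) endpoints by an $\omega$-density argument. Throughout I would work with a local expression $f=\ext{\alpha}(p,-)$ on a Fermat-open chart $\ext{V}\subseteq\text{int}(J)$, where $\alpha\in\Coo(P\times V,\R)$, $V\subseteq(\st{a},\st{b})_\R$ is an open interval, and $p=\st{p}+k\in\ext{P}$ with $k$ infinitesimal.

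First I would show that $f$ is constant on each such chart, and this is where the two distinctive features of $\FR$ — the infinitesimal and the real directions — must be handled separately. Since $f'\equiv0$ on the Fermat-open set $\text{int}(J)$, iterating the Fermat-Reyes theorem (Thm~\ref{thm:Fermat-Reyes-h-small}) gives $f^{(i)}\equiv0$ on $\text{int}(J)$ for every $i\ge1$ (the incremental ratio of the zero function is zero). For the \emph{infinitesimal direction} I would apply Taylor's formula with nilpotent increments at the standard point $\st{y}\in V$ — expanding the local expression simultaneously in the increment $h:=y-\st{y}$ and in the infinitesimal part $k$ of $p$ about their standard parts — to obtain $f(\st{y}+h)=\sum_{i\ge0}\frac{h^{i}}{i!}f^{(i)}(\st{y})=f(\st{y})$ (a finite sum, since $h$ is nilpotent). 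Hence $f(y)=f(\st{y})$ for every $y\in\ext{V}$. For the \emph{real direction} I would note that $t\mapsto f(t)$, $t\in V$, is an $\FR$-valued function whose quasi-decomposition has ordinary smooth coefficients $D_\ell(t)$ (expand the chart in $k$ only); a short computation with Fermat-Reyes identifies the termwise real derivative $\sum_\ell D_\ell'(t)\,\diff{t}_{w_\ell}$ with $f'(t)=0$, so Lem~\ref{lem:quasi-decomposition} forces each $D_\ell'\equiv0$, and the classical constant function theorem on the interval $V$ makes each $D_\ell$, hence $f|_V$, constant. Combining the two directions, $f(y)=f(\st{y})$ is constant on $\ext{V}$.

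Next I would globalise. Local constancy means that $\{y\in\text{int}(J)\mid f(y)=f(x_0)\}$ is both open and closed in $\text{int}(J)$ (near any point, $f$ equals its value there) and nonempty; since $\text{int}(J)$ is connected by Lem~\ref{lem:intervalsConnected}, this set is all of $\text{int}(J)$, so $f\equiv c$ there for some $c\in\FR$. To reach the endpoints I would observe that $\text{int}(J)$ is $\omega$-dense in $J$: for an endpoint such as $a$, the $\omega$-ball $B_s(a;d_\omega)$ consists of the points $a+r$ with $r\in\R$, $|r|<s$, and for small $r>0$ one has $\st{a}<\st{a}+r<\st{b}$, so $a+r\in\text{int}(J)$. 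As $f$ agrees with the (smooth) constant $c$ on the $\omega$-dense subset $\text{int}(J)$ of $J$, the uniqueness theorem (Thm~\ref{thm:uniqueness}) yields $f\equiv c$ on all of $J$.

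I expect the main obstacle to be the local constancy step, and within it the bookkeeping of Taylor's formula with nilpotent increments when the representing parameter $p$ is itself non-standard: one must expand simultaneously in $h$ and in the infinitesimal part $k$ of $p$ and check that the finite nilpotent expansion reproduces $\sum_i\frac{h^i}{i!}f^{(i)}(\st{y})$, and likewise verify that the ordinary $t$-derivative of the coefficients $D_\ell$ really computes the Fermat derivative $f'$. These are routine but delicate $\FR$-computations; by contrast, the globalisation and the endpoint extension are formal once connectedness (Lem~\ref{lem:intervalsConnected}) and the uniqueness theorem (Thm~\ref{thm:uniqueness}) are invoked.
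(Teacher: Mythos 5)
Your proposal is correct, and its outer skeleton --- reduce to $\text{int}(J)$ via the uniqueness Thm.~\ref{thm:uniqueness} (after checking that $\text{int}(J)$ is $\omega$-dense in $J$), prove local constancy, then globalise by connectedness (Lem.~\ref{lem:intervalsConnected}) --- is the same as the paper's; but your proof of the core step, local constancy, is genuinely different. The paper argues analytically, at the level of little-oh polynomial representatives: it recalls from the proof of Thm.~\ref{thm:Fermat-Reyes-h-small} the integral formula $y_{t}=\Rint_{0}^{1}\partial_{2}\alpha(p_{t},x_{t}+s\cdot h_{t})\,\diff{s}$ for the incremental ratio $y=f'[x,h]$, uses the hypothesis at the interior points $x+sh$, $s\in[0,1]_{\R}$, and applies Lebesgue dominated convergence to conclude $f'[x,h]=0$ for every sufficiently small $h$ (infinitesimal or not), so that $f(x+h)=f(x)$ follows in one stroke from the Fermat--Reyes identity. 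You never control the incremental ratio at finite increments; instead you treat the infinitesimal direction by the nilpotent Taylor expansion (using $f^{(i)}\equiv0$ for all $i\ge1$, from iterated Fermat--Reyes) and the real direction by expanding in the infinitesimal part $k$ of the parameter, differentiating the quasi-decomposition coefficients $D_{\ell}$ termwise and invoking Lem.~\ref{lem:quasi-decomposition} together with the classical one-variable constant function theorem --- in effect re-running the decomposition machinery that the paper uses to prove Thm.~\ref{thm:uniqueness}. Both of the verifications you flag as delicate do go through: in the double expansion of $\ext{\alpha}(\st{p}+k,\st{y}+h)$ every cross term whose total degree exceeds the nilpotency bound vanishes by the product formula $h_{1}^{i_{1}}\cdot\ldots\cdot h_{n}^{i_{n}}=0\iff\sum_{k}i_{k}/\omega(h_{k})>1$, which reconciles that expansion with $\sum_{i}\frac{h^{i}}{i!}f^{(i)}(\st{y})$; and the last clause of Thm.~\ref{thm:Fermat-Reyes-h-small}, $f'(t)=\ext{(\partial_{x}\alpha)}(p,t)$, identifies the Fermat derivative with the termwise derivative $\sum_{\ell}D_{\ell}'(t)\,\diff{t}_{w_{\ell}}$, since $\partial_{x}$ commutes with the Taylor expansion in the parameter. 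As for what each route buys: the paper's argument is shorter but reaches under the quotient construction (representatives, Riemann integrals of representatives, dominated convergence) and relies on a formula available only from inside the proof of Fermat--Reyes, whereas yours stays within the algebra of $\FR$ (decompositions, nilpotency) plus classical calculus, so it is more self-contained relative to the paper's stated results, at the price of iterated derivatives and heavier combinatorial bookkeeping.
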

\begin{proof}
\noindent Let $a$, $b$ be the endpoints of the interval $J$. By
the uniqueness Thm.~\ref{thm:uniqueness}, it is enough to show that
$f$ is a constant function when restricted to $\text{int}(J)=\ext{\{(\st{a},\st{b})_{\R}\}}$,
and the assumption that $\st{a}<\st{b}$ (non-infinitesimal interval)
implies that $\text{int}(J)\neq\emptyset$. Let $x\in\text{int}(J)$,
so that
\begin{equation}
\forall^{0}h\in\FR:\ x+h\text{ is an interior point of }(a,b).\label{eq:interiorPoint}
\end{equation}
From the Fermat-Reyes Thm.~\ref{thm:Fermat-Reyes-h-small} we get
\begin{equation}
\forall^{0}h\in\FR:\ f(x+h)=f(x)+h\cdot f'[x,h].\label{eq:2_constancyPrinciple}
\end{equation}
Let us say that \eqref{eq:interiorPoint} and \eqref{eq:2_constancyPrinciple}
hold for $|h|<r\in\R_{>0}$ (here $r$ depends on $x$), and fix such
an $h$. Without loss of generality, we may assume that $|h_{t}|<r$
for each $t\ge0$, where $(h_{t})_{t}$ is a little-oh polynomial
representing $h$.

\noindent For simplicity, set $y:=f'[x,h]$. As proved in Thm.~\ref{thm:Fermat-Reyes-h-small}
(or Thm.~22 in \citep{Gio10e}), we can always find an ordinary smooth
function $\alpha$ and a parameter $p\in\FR^{\sf p}$ such that for
all $s\in[0,1]_{\R}$
\begin{align}
y_{t} & =\Rint_{0}^{1}\partial_{2}\alpha(p_{t},x_{t}+s\cdot h_{t})\,\diff{s}\quad\forall^{0}\,t\in\R_{\geq0}\text{ }\label{eq:1_constancyPrinciple}\\
f'(x+sh) & =\ext{(\partial_{2}\alpha)}(p,x+sh)\text{ in }\FR,
\end{align}
where $\Rint$ denotes the classical Riemann integral. As a motivation,
we can say that we would like to consider the function $y=\int_{0}^{1}\ext{(\partial_{2}\alpha)}(p,x+sh)ds$
with $p,x,h\in\FR$. For fixed $p,x,h\in\FR$, we understand it as
$y_{t}$ above, i.e., using little-oh polynomial representative for
each Fermat real, so that we get an ordinary integral. Note that this
integral is independent of the representatives we choose. But $|sh|<r$,
so that $x+sh$ is still an interior point, and hence by assumption
$f'(x+s\cdot h)=0$, i.e. $\ext{(\partial_{2}\alpha)}(p,x+s\cdot h)=0$
in $\FR$. Written in explicit form, this means
\[
\lim_{t\to0^{+}}\frac{\partial_{2}\alpha(p_{t},x_{t}+s\cdot h_{t})}{t}=0.
\]
From \eqref{eq:1_constancyPrinciple}, and using the Lebesgue dominated
convergence theorem, we have
\begin{align*}
\lim_{t\to0^{+}}\frac{y_{t}}{t} & =\lim_{t\to0^{+}}\frac{1}{t}\cdot\int_{0}^{1}\partial_{2}\alpha(p_{t},x_{t}+s\cdot h_{t})\,\diff{s}\\
 & =\int_{0}^{1}\lim_{t\to0^{+}}\frac{\partial_{2}\alpha(p_{t},x_{t}+s\cdot h_{t})}{t}\,\diff{s}=0.
\end{align*}
So $y=f'[x,h]=0$ in $\FR$, and hence \eqref{eq:2_constancyPrinciple}
yields $f(x+h)=f(x)$ for all $h$ such that $|h|<r$. This proves
that $f$ is locally constant in the Fermat topology, and from Lem.~\ref{lem:intervalsConnected}
the conclusion follows.
\end{proof}
We finally introduce the notion of left and right derivatives by means
of an extension of the Fermat-Reyes theorem from a Fermat open set
like $\text{int}(J)$ to the whole interval $J$. We first need the
following:
\begin{lem}
\label{lem:extensionFcnOutsideBoundaries}Let $a$, $b\in\extFR$
and let $J$ be a non-infinitesimal interval of $\FR$ with endpoints
$a$, $b$. Let $f:J\ra\FR$ be a smooth function. Then there exist
$\delta\in\R_{>0}$ and a smooth function $\bar{f}:(a-\delta,b+\delta)\ra\FR$
such that $\bar{f}|_{J}=f$.
\end{lem}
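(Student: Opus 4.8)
The plan is to produce $\bar f$ by gluing three locally smooth pieces over a Fermat open set and then invoking the sheaf property of $\FDiff$. I would treat the generic case where $a,b$ are both finite; if $a=-\infty$ (resp.\ $b=+\infty$) the corresponding endpoint chart below is simply omitted and no extension is needed on that side, and if $J=\FR=\ext\R$ is already Fermat open there is nothing to prove, while an infinitesimal $J$ is excluded by hypothesis.

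First I would fix, at each endpoint, a chart that reaches past it. Near $a$, choose $x_a\in J$ with $\st{x_a}=\st a$: take $x_a:=a$ when $a\in J$, and $x_a:=a+\diff t$ otherwise, which does lie in $J$ exactly because $\st a<\st b$ (so $\st{x_a}=\st a<\st b$ forces $x_a<b$). Smoothness of $f$ gives a Fermat open $U_a=\ext{V_a}\ni x_a$ with $f=\ext{\alpha_a}(p_a,\blank)$ on $U_a\cap J$ for some ordinary smooth $\alpha_a$; intersecting $V_a$ with a small interval I may assume $V_a=(\st a-\delta_a,\st a+\delta_a)_\R$, the representation persisting on the smaller set. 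Then $U_a=\{x:\st x\in V_a\}$ contains the whole fibre over $\st a$ and hence pokes strictly below $a$, and — the key point — already pins down $f$ on the entire boundary layer $\{x\in J:\st x=\st a\}$, since every such point lies in $U_a\cap J$. I would build $U_b=\ext{(\st b-\delta_b,\st b+\delta_b)_\R}$, $\alpha_b$, $p_b$ symmetrically at $b$.

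Next I would glue. Because $\st a<\st b$ I may take $\delta_a+\delta_b<\st b-\st a$, so that $V_a\cap V_b=\emptyset$ and hence $U_a\cap U_b=\emptyset$. Set $U:=U_a\cup\text{int}(J)\cup U_b$, a Fermat open set, and let $\bar f$ be $\ext{\alpha_a}(p_a,\blank)$ on $U_a$, $f$ on $\text{int}(J)$, and $\ext{\alpha_b}(p_b,\blank)$ on $U_b$. No uniqueness argument is needed to check consistency: on $U_a\cap\text{int}(J)$ and on $U_b\cap\text{int}(J)$ the inclusion $\text{int}(J)\subseteq J$ forces the endpoint charts to agree with $f$, and the only remaining overlap $U_a\cap U_b$ is empty. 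Each piece being a smooth function on a Fermat open set, the sheaf property of $\FDiff$ turns $\bar f$ into a well-defined smooth function on $U$.

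Finally, writing $W:=(\st a-\delta_a,\st b+\delta_b)_\R$, the union of the three base intervals is exactly $W$, so a short check of standard parts gives $U=\ext W$; then any $\delta$ with $0<\delta<\min\{\delta_a,\delta_b\}$ yields $(a-\delta,b+\delta)\subseteq U$, and restricting $\bar f$ there is the required extension. Moreover $\bar f|_J=f$, because $\bar f$ equals $f$ on $\text{int}(J)$ by construction and equals $f$ on the two boundary layers through the endpoint charts. The step I expect to be the genuine obstacle is exactly the one where non-infinitesimality must enter twice: it is what lets me pick a boundary point $x_a$ truly inside $J$, and, more importantly, what lets me \emph{separate} the two endpoint charts so that their (possibly different) local representations of $f$ never have to be reconciled on a shared fibre over a common standard part.
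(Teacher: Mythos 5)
Your proposal is correct and takes essentially the same route as the paper: a parameterized local representation of $f$ on a Fermat open neighbourhood of each (finite) endpoint, the interior $\text{int}(J)$ as the middle piece, and the sheaf property of $\FDiff$ to glue these into a smooth function on a Fermat open set containing $(a-\delta,b+\delta)$, with trichotomy of $\le$ on $\FR$ giving $\bar{f}|_{J}=f$. Your extra step of shrinking the endpoint charts so that $\delta_a+\delta_b<\st{b}-\st{a}$, which makes $U_a\cap U_b=\emptyset$, is a harmless and in fact clarifying sharpening of the paper's blanket claim that ``any two'' of the three pieces agree on the intersection of their domains.
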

On the other hand, the smooth function $x\in\left\{ x\in\FR\mid0<\st{x}<1\right\} \mapsto\frac{1}{x}\in\FR$
cannot be extended to a smooth function defined on any interval containing
an infinitesimal.
\begin{proof}
By definition, we can write the function $f$ as a parameterized extension
of an ordinary smooth function both in a Fermat open neighborhood
of $a+\diff{t}$ and of $b-\diff{t}$, i.e.:
\begin{align}
f(x) & =\ext{\alpha}(p,x)\quad\forall x\in\ext{A}\cap J\label{eq:fIn-a}\\
f(x) & =\ext{\beta}(q,x)\quad\forall x\in\ext{B}\cap J,\label{eq:fIn-b}
\end{align}
where $A$, $B$ are open sets in $\R$ such that $\st{a}\in A$ and
$\st{b}\in B$, $\alpha\in\Coo(P\times A,\R)$, $\beta\in\Coo(Q\times B,\R)$,
$p\in\ext{P}\subseteq\FR^{\sf p}$, and $q\in\ext{Q}\subseteq\FR^{\sf q}$.
Therefore, any two of the smooth functions $\ext{\alpha}(p,-):\ext{A}\ra\R$,
$f|_{\text{int}(J)}$ and $\ext{\beta}(q,-):\ext{B}\ra\R$ are equal
in the intersection of their domains. Since $\st{a}\in A$ and $\st{b}\in B$,
we can always find some $\delta\in\R_{>0}$ such that $\{\ext{A},\text{int}(J),\ext{B}\}$
is a Fermat open covering of $(a-\delta,b+\delta)$ (let us note explicitly
that in this step we need to use the trichotomy law of the total order
$\le$ on $\FR$). The sheaf property of the $\FDiff$ space $(a-\delta,b+\delta)$
yields the existence of a smooth function $\bar{f}:(a-\delta,b+\delta)\ra\R$
such that $\bar{f}|_{J}=f|_{J}$.
\end{proof}
The notion of thickening $\widetilde{U}_{v}$ is defined only for
Fermat open sets $U$; see \citep{Gio10e}. A simple way to extend
this notion to arbitrary subsets is the following:
\begin{defn}
\label{def:thickeningClosedInt}Let $C\subseteq\FR^{d}$ and $v\in\FR^{d}$.
We say that $T\subseteq\FR^{d}\times\FR$ \emph{is a thickening of
$C$ along $v$} if there exist a Fermat open set $U\subseteq\FR^{d}$
such that $C\subseteq U$ and $T=\{(x,h)\in\widetilde{U}_{v}\mid x\in C\}$.
\end{defn}
Here is an extension of the Fermat-Reyes Thm.~\ref{thm:Fermat-Reyes-h-small}:
\begin{lem}
\label{lem:FermatReyesClosedInterval}Let $a$, $b\in\extFR$ and
let $J$ be a non-infinitesimal interval of $\FR$ with endpoints
$a$, $b$. Then the Fermat-Reyes theorem also holds if we replace
the Fermat open set $U$ with the interval $J$ in the statement of
Thm.~\ref{thm:Fermat-Reyes-h-small}. More precisely, if $x\in J$
and $x\simeq a$ (i.e., $\st{x}=\st{a}$), so that $x\in J\setminus\text{\emph{int}}(J)$,
then equation \eqref{eq:FR} shall be replaced by
\[
\forall^{0}h\in\FR_{>0}:\ f(x+h)=f(x)+h\cdot r(x,h)\quad\text{in}\quad\FR.
\]
Analogously, use $\forall^{0}h\in\FR_{<0}$ if $x\simeq b$.\end{lem}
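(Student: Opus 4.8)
The plan is to \emph{reduce the boundary case to the interior case} by first extending $f$ to a genuinely Fermat-open neighbourhood and then invoking the original Fermat--Reyes Thm.~\ref{thm:Fermat-Reyes-h-small}. First I would apply Lem.~\ref{lem:extensionFcnOutsideBoundaries} to obtain $\delta\in\R_{>0}$ and a smooth $\bar f:(a-\delta,b+\delta)\ra\FR$ with $\bar f|_{J}=f$. The Fermat interior $W:=\text{int}(a-\delta,b+\delta)=\{x\in\FR\mid\st{a}-\delta<\st{x}<\st{b}+\delta\}$ is a Fermat open set containing $J$ and contained in $(a-\delta,b+\delta)$, so $\bar f|_{W}$ is a smooth function on a Fermat open set. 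Applying Thm.~\ref{thm:Fermat-Reyes-h-small} to $\bar f|_{W}$ yields a unique $\bar r\in\FDiff(\widetilde W,\FR)$ with $\bar f(x+h)=\bar f(x)+h\cdot\bar r(x,h)$ on $\widetilde W$, together with its local expression. I would then \emph{define} $r$ to be the restriction of $\bar r$ to the thickening $T$ of $J$ in the sense of Def.~\ref{def:thickeningClosedInt} (with $U=W$, $v=1$), and set $f'(x):=r(x,0)$, which is the sought one-sided derivative at the endpoints.

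For the existence half I would verify the increment formula precisely where its left-hand side still refers to $f$, i.e.\ where $x+h\in J$. If $x\in\text{int}(J)$ then $\st{a}<\st{x}<\st{b}$, so for $h$ sufficiently small (the $\rho$ in $\forall^{0}h$ depending on $x$) one has $x+h\in\text{int}(J)\subseteq J$; since $\bar f=f$ on $J$, the identity for $\bar r$ descends verbatim to $f$ and $r$, recovering \eqref{eq:FR}. If instead $x\simeq a$ then $x\ge a$, and for $h\in\FR_{>0}$ small we get $a<x+h<b$, hence $x+h\in J$; thus $\bar f(x+h)=f(x+h)$ and $\bar f(x)=f(x)$, and the relation $f(x+h)=f(x)+h\cdot r(x,h)$ holds, which is exactly the asserted replacement of \eqref{eq:FR}. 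The case $x\simeq b$ with $h\in\FR_{<0}$ is symmetric. This is the very point at which the sign restriction on $h$ is forced: with the opposite sign $x+h$ would leave $J$ and $f(x+h)$ would be undefined.

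The uniqueness half, and simultaneously the independence of $r$ from the non-unique extension $\bar f$, I would deduce from the uniqueness Thm.~\ref{thm:uniqueness}. Wherever $h$ is invertible (equivalently $\st{h}\neq0$) and $x+h\in J$, the cancellation law forces $r(x,h)=\bigl(f(x+h)-f(x)\bigr)/h$, a quantity depending on $f$ alone. It therefore suffices to check that these pairs are dense, in the $\omega$-topology, in the domain of $r$. Using that an $\omega$-ball about a point consists of its real translates, I would approach any admissible pair $(x_{0},h_{0})$ (in particular a boundary pair $(x_{0},0)$ with $\st{x_{0}}\in\{\st{a},\st{b}\}$) by pairs $(x_{0}+1/v,\,h_{0}+1/v)$ with $v\in\N$ large: near the left endpoint the positive real shift keeps both $x_{0}+1/v$ and $x_{0}+1/v+(h_{0}+1/v)$ inside $J$ and makes the increment invertible, and symmetrically at the right endpoint. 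Two smooth solutions of the increment relation then agree on this dense set, so Thm.~\ref{thm:uniqueness} gives equality; applied to the remainders of two different extensions it shows that $r$ is well defined.

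The main obstacle is exactly this boundary bookkeeping: the perturbations witnessing $\omega$-density must be chosen to respect \emph{both} constraints $x\in J$ and $x+h\in J$ at the endpoints, which is where the sign asymmetry between $\FR_{>0}$ and $\FR_{<0}$ originates and where the trichotomy of the total order on $\FR$ (already used in Lem.~\ref{lem:extensionFcnOutsideBoundaries}) is essential. The interior case carries no difficulty, being a verbatim restriction of Thm.~\ref{thm:Fermat-Reyes-h-small}; all the content lies in matching the one-sided increments at $x\simeq a$ and $x\simeq b$ to the two-sided remainder $\bar r$ of the ambient extension.
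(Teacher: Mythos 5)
Your construction is exactly the paper's: extend $f$ to $\bar f$ on $(a-\delta,b+\delta)$ by Lem.~\ref{lem:extensionFcnOutsideBoundaries}, apply Thm.~\ref{thm:Fermat-Reyes-h-small} to $\bar f$ on the Fermat open set $\text{int}(a-\delta,b+\delta)$, restrict its incremental ratio to the thickening $T$ of $J$ from Def.~\ref{def:thickeningClosedInt}, and invoke Thm.~\ref{thm:uniqueness} for uniqueness. Your verification of the one-sided increment formula at $x\simeq a$ and $x\simeq b$ (and of why the sign restriction on $h$ is forced) is a correct elaboration of what the paper leaves implicit, so the existence half is fine.

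The genuine gap is in the uniqueness step: the pairs $(x,h)$ with $\st{h}\neq 0$ and $x,x+h\in J$ are \emph{not} $\omega$-dense in the whole domain $T$ of $r$, and indeed $r$ is \emph{not} unique on all of $T$. The set $T$ contains pairs $(x,h)$ with $x+h$ at standard distance outside $J$ (e.g.\ $x\simeq a$ and $\st{h}<0$), and since $\omega$-neighbourhoods consist of real translates, no nearby pair can bring $x+h$ back into $J$; at such pairs the value $r(x,h)$ genuinely depends on the chosen extension $\bar f$. Concretely, for $J=[0,1]$ and $f=0$, take $\bar f_1=0$ and $\bar f_2=\ext{g}$ with $g$ an ordinary smooth function vanishing identically on $\R_{\ge0}$ but not on $\R_{<0}$: both extend $f$, yet their incremental ratios differ at the pair $(0,-\delta/2)\in T$. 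What your real-translate argument actually proves --- and all that is needed --- is uniqueness on the sub-thickening $\left\{ (x,h)\in T\mid x+h\in J\right\}$: there your density claim is correct (with one small repair: if $x_{0}\simeq a$ and $x_{0}+h_{0}\simeq b$ you must shift $x$ by $+1/v$ and $h$ by $-2/v$, not both by $+1/v$), and Thm.~\ref{thm:uniqueness} then gives agreement of any two incremental ratios at all such pairs, in particular at every $(x,0)$, so that $f'(x):=r(x,0)$ is well defined and independent of $\bar f$. This restricted uniqueness is also what the paper intends, as the Remark following the lemma (``$s$ is essentially a restriction of $r$'') indicates; so your proof needs only this re-scoping of the density claim, not a new idea.
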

\begin{rem}
Let $r:T\ra\FR$ be an incremental ratio of a smooth function $f:J\ra\FR$
defined on a non-infinitesimal interval $J$ of $\FR$, and let $s:S\ra\FR$
be an incremental ratio of the restriction $f|_{\text{int}(J)}$.
Then $s$ is essentially a restriction of $r$ in the sense that
\[
\forall x\in\text{int}(J)\,\forall^{0}h\in\FR:\ r(x,h)=s(x,h).
\]
We can hence define $f'(x):=r(x,0)$ for all $x\in J$, obtaining
an extension of $f'$ from $\text{int}(J)$ to the whole $J$.\end{rem}
\begin{proof}
Let $\bar{f}:(a-\delta,b+\delta)\ra\FR$ be an extension of $f:J\ra\FR$
as in Lem.\ \ref{lem:extensionFcnOutsideBoundaries}, set $U:=\text{int}(a-\delta,b+\delta)$
and let $\bar{f}'[-,-]:\widetilde{U}\ra\FR$ be the incremental ratio
of $\bar{f}$. Then $T:=\left\{ (x,h)\in\widetilde{U}\mid x\in J\right\} $
is a thickening of $J$ and $r:=\bar{f}'[-,-]|_{T}$ is a searched
incremental ratio of $f$. The uniqueness part follows from the uniqueness
Thm.~\ref{thm:uniqueness}.
\end{proof}

\section{\label{sec:Existence-and-uniqueness}Existence and uniqueness of
primitives}

We can now prove the existence and uniqueness of primitives for smooth
functions defined on an interval $[a,b]$ with finite endpoints.
\begin{thm}
\label{thm:existenceOfIntegralForFiniteBoundaries}Let $a$, $b\in\FR$
with $\st{a}<\st{b}$, let $f:[a,b]\ra\FR$ be a smooth function,
and let $u\in[a,b]$. Then there exists one and only one smooth function
\[
I:[a,b]\ra\FR
\]
such that
\[
I'(x)=f(x)\quad\forall x\in[a,b],\text{ {and}}
\]
\[
I(u)=0.
\]
\end{thm}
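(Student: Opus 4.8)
The plan is to obtain uniqueness at once from the constant function theorem and to construct a primitive by integrating local representatives of $f$ and gluing them by the sheaf property of $\FDiff$. For \textbf{uniqueness}, suppose $I_{1}$ and $I_{2}$ both satisfy the two conditions. Then $g:=I_{1}-I_{2}$ is smooth on $[a,b]$ with $g'(x)=0$ for every $x\in\text{int}([a,b])$, so Thm.~\ref{thm:constancyPrinciple} (applicable because $\st{a}<\st{b}$) forces $g$ to be constant on $[a,b]$; since $g(u)=I_{1}(u)-I_{2}(u)=0$, we get $I_{1}=I_{2}$.

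For \textbf{existence}, I would first move to a Fermat open set to avoid boundary issues during the construction. By Lem.~\ref{lem:extensionFcnOutsideBoundaries} extend $f$ to a smooth $\bar{f}$ on $(a-\delta,b+\delta)$, and work on the Fermat open set $U:=\text{int}(a-\delta,b+\delta)\supseteq[a,b]$. Around each point of $U$ the function $\bar{f}$ has a local expression $\bar{f}(x)=\ext{\alpha}(p,x)$ on some $\ext{V}\cap U$ with $\alpha\in\Coo(P\times V,\R)$ and $V$ an open interval. I would produce a local primitive by the ordinary Riemann integral: fixing $c\in V$, set $A(p',x'):=\Rint_{c}^{x'}\alpha(p',\xi)\,\diff{\xi}$, so that $A\in\Coo(P\times V,\R)$ with $\partial_{2}A=\alpha$. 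The Fermat-Reyes Thm.~\ref{thm:Fermat-Reyes-h-small} then gives $\left(\ext{A}(p,-)\right)'=\ext{(\partial_{2}A)}(p,-)=\ext{\alpha}(p,-)=\bar{f}$ on $\ext{V}\cap U$, so every point of $U$ has a Fermat open neighborhood carrying a smooth primitive of $\bar{f}$.

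The substantive step is to glue these into one primitive. On the overlap of two such neighborhoods the two local primitives have equal derivative, hence their difference has zero derivative; choosing the local neighborhoods of the form $\ext{V}$ with $V$ an open interval makes each pairwise overlap of the form $\text{int}(J')$ for a non-infinitesimal interval $J'$, which is connected by Lem.~\ref{lem:intervalsConnected}, so the argument of Thm.~\ref{thm:constancyPrinciple} identifies that difference as a single constant. Since $\{\st{x}\mid x\in U\}=(\st{a}-\delta,\st{b}+\delta)_{\R}$ pulls the cover back to an open cover of the compact interval $[\st{a},\st{b}]_{\R}$, I would select a finite chain $\ext{V}_{1},\ldots,\ext{V}_{k}$ whose standard parts overlap consecutively and cover $[\st{a},\st{b}]_{\R}$, and then inductively subtract from the $i$-th local primitive the constant by which it differs from the previous one on their (connected) overlap. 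The adjusted primitives agree on all pairwise intersections, so the sheaf property of the $\FDiff$ space $U$ yields a global smooth $\bar{I}$ on $U$ with $\bar{I}'=\bar{f}$.

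Finally I would set $I:=\bar{I}|_{[a,b]}-\bar{I}(u)$, which gives $I(u)=0$ and $I'(x)=\bar{f}(x)=f(x)$ for $x\in\text{int}([a,b])$; at the endpoints the one-sided derivatives of Lem.~\ref{lem:FermatReyesClosedInterval} agree with $\bar{I}'$ by the Remark following that lemma, so $I'=f$ holds on all of $[a,b]$. I expect the \emph{gluing} to be the main obstacle: the local primitives are only determined up to additive constants, so the crux is to convert the local constant-function identifications of Thm.~\ref{thm:constancyPrinciple} into a single consistent global normalization, which succeeds precisely because the base $[\st{a},\st{b}]_{\R}$ is an interval and a finite chain subcover has no obstructing overlaps.
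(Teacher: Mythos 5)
Your proposal is correct and follows essentially the same route as the paper's own proof: cover $[\st{a},\st{b}]_{\R}$ by finitely many ordered, consecutively overlapping real intervals, build local primitives from ordinary Riemann integrals via the Fermat--Reyes Thm.~\ref{thm:Fermat-Reyes-h-small}, identify them up to constants on the connected overlaps by (the argument of) the constant function Thm.~\ref{thm:constancyPrinciple} together with Lem.~\ref{lem:intervalsConnected}, glue by the sheaf property, and get uniqueness again from Thm.~\ref{thm:constancyPrinciple}. The differences are purely organizational --- the paper reduces to $u=a$ and builds the normalization into a recursive construction in which each local primitive continues the previous one at a chosen real point of the overlap, whereas you first extend $f$ by Lem.~\ref{lem:extensionFcnOutsideBoundaries} and subtract $\bar{I}(u)$ at the end; only note that your glued primitive is defined on $\bigcup_{i}\ext{V}_{i}$ (which suffices, since this set contains $[a,b]$), not necessarily on all of $U$.
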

\begin{proof}
\noindent We may assume that $u=a$, since if $I'=f$ on $[a,b]$
and $I(a)=0$, then $J(x):=I(x)-I(u)$ verifies $J'=I'=f$ on $[a,b]$
and $J(u)=0$.

For every $x\in[a,b]$, we can write
\[
f|_{\mathcal{V}_{x}}=\ext{\alpha_{x}}(p_{x},-)|_{\mathcal{V}_{x}}
\]
for suitable $p_{x}\in\ext{U_{x}}\subseteq\FR^{{\sf p}_{x}}$, $U_{x}$
an open subset of $\R^{{\sf p}_{x}}$, $V_{x}$ an open subset of
$\R$ such that $x\in\ext{V_{x}}\cap[a,b]=:\mathcal{V}_{x}$ and $\alpha_{x}\in\Coo(U_{x}\times V_{x},\R)$.
We may assume that the open sets $V_{x}$ are of the form $V_{x}=(\st{x}-\delta_{x},\st{x}+\delta_{x})_{\R}$
for some $\delta_{x}\in\R_{>0}$.

The idea for the construction of the function $I$ is to patch together
suitable integrals of the functions $\ext{\alpha_{x}}(p_{x},-)$ and,
at the same time, to respect the condition $I(a)=0$. In order to
get a quasi-standard smooth function, we have to patch together integrals
in an order so that each one is the ``continuation'' of the previous
one. In other words, on the non-empty connected intersection of the
domains of any two of these integrals, the integrals must have the
same value at one point, so that we can prove that they are equal
on the whole intersection.

Since $\left(V_{x}\right)_{x\in[\st{a},\st{b}]_{\R}}$ is an open
cover of the compact set $[\st{a},\st{b}]_{\R}$, we can find a finite
subcover. That is, we can find $x_{1},\ldots,x_{n}\in[\st{a},\st{b}]_{\R}$
such that $\left(V_{x_{i}}\right)_{i=1,\ldots,n}$ is an open cover
of $[\st{a},\st{b}]_{\R}$. We will use simplified notations like
$V_{i}:=V_{x_{i}}$, $\delta_{i}:=\delta_{x_{i}}$, $\alpha_{i}:=\alpha_{x_{i}}$,
etc.

By shrinking each $V_{i}$ if necessary, we may always assume to have
chosen the indices $i=1,\ldots,n$ and the radii $\delta_{i}\in\R_{>0}$
such that
\[
\st{a}=x_{1}<x_{2}<\ldots<x_{n}=\st{b},\text{ {and}}
\]
\[
x_{i}-\delta_{i}<x_{i+1}-\delta_{i+1}<x_{i}+\delta_{i}<x_{i+1}+\delta_{i+1}\quad\forall i=1,\ldots,n-1.
\]
In this way, the intervals $V_{i}=(x_{i}-\delta_{i},x_{i}+\delta_{i})_{\R}$
and $V_{i+1}=(x_{i+1}-\delta_{i+1},x_{i+1}+\delta_{i+1})_{\R}$ intersect
in $(x_{i+1}-\delta_{i+1},x_{i}+\delta_{i})_{\R}$. For each $i=1,\ldots,n-1$,
we choose a point $\bar{x}_{i}\in(x_{i+1}-\delta_{i+1},x_{i}+\delta_{i})_{\R}$
and define, recursively:
\begin{align}
\iota_{1}(q,x,y): & =\Rint_{y}^{x}\alpha_{1}(q,s)\diff{s}\quad\forall q\in U_{x_{1}}\,\forall x,y\in V_{1},\label{eq:defI_1}\\
I_{1}(x): & =\ext{\iota_{1}}(p_{1},x,a)\quad\forall x\in\ext{V_{1}}.\nonumber 
\end{align}

\begin{align}
\iota_{i+1}(q,x): & =\Rint_{\bar{x}_{i}}^{x}\alpha_{i+1}(q,s)\diff{s}\quad\forall q\in U_{x_{i}}\,\forall x\in V_{i+1}\,\forall i=1,\ldots,n-1,\label{eq:defI_i+1}\\
I_{i+1}(x): & =\ext{\iota_{i+1}}(p_{i+1},x)+I_{i}(\bar{x}_{i})\quad\forall x\in\ext{V_{i+1}}\,\forall i=1,\ldots,n-1.\nonumber 
\end{align}
So every $I_{i}$ is a quasi-standard smooth function defined on $\ext{V_{i}}$,
and moreover, from Thm.~\ref{thm:Fermat-Reyes-h-small} it follows
that
\[
I'_{i}(x)=\ext{\alpha_{i}}(p_{i},x)=f(x)\quad\forall x\in\ext{V_{i}}.
\]
Therefore, for each point $x$ in 
\begin{align*}
\ext{V_{i}}\cap\ext{V_{i+1}} & =\ext{(V_{i}\cap V_{i+1})}=\ext{\{(x_{i+1}-\delta_{i+1},x_{i}+\delta_{i})_{\R}\}}\\
 & =\text{int}([x_{i+1}-\delta_{i+1},x_{i}+\delta_{i}]),
\end{align*}
we have
\begin{equation}
I'_{i}(x)=f(x)=I'_{i+1}(x),\text{ {and}}\label{eq:1_existenceOfIntegralForRealBoundaries}
\end{equation}
\[
\left(I_{i}-I_{i+1}\right)(\bar{x}_{i})=0.
\]
So, from the constant function Thm.~\ref{thm:constancyPrinciple}
it follows that $I_{i}=I_{i+1}$ on $[x_{i+1}-\delta_{i+1},x_{i}+\delta_{i}]$.
We can hence use the Fermat open cover $\left(\ext{V_{i}}\cap[a,b]\right)_{i=1,\ldots,n}$
of the space $[a,b]\in\FDiff$ to patch together the functions $I_{i}|_{\ext{V_{i}}\cap[a,b]}$
to obtain a smooth function $I:[a,b]\ra\FR$, by the sheaf property
of smooth functions. This function satisfies the required conditions
because of \eqref{eq:1_existenceOfIntegralForRealBoundaries} and
the equalities $I(a)=I_{1}(a)=0$.

To prove the uniqueness, suppose that $J$ verifies $J'=f$ on $[a,b]$
and $J(u)=0$. Then again by the constant function Thm.~\ref{thm:constancyPrinciple},
we know that $(J-I)|_{[a,b]}$ is constant and equal to zero at $u$,
and hence a zero function.
\end{proof}
The above theorem can be extended to smooth functions defined on an
arbitrary non-infinitesimal interval. 
\begin{thm}
\noindent \label{thm:existenceOfIntegralForInfiniteBoundaries}Let
$a$, $b\in\extFR$ and let $J$ be a non-infinitesimal interval of
$\FR$ with endpoints $a$, $b$. Let $f:J\ra\FR$ be a smooth function,
and let $u\in J$. Then there exists one and only one smooth function
$I:J\ra\FR$ such that $I'=f$ and $I(u)=0$.\end{thm}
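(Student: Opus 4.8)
The plan is to reduce the general statement to the finite-boundary case already established in Thm.~\ref{thm:existenceOfIntegralForFiniteBoundaries} by an exhaustion argument, and to glue the resulting local primitives using the constant function Thm.~\ref{thm:constancyPrinciple}. Concretely, for $n\in\N$ set $J_{n}:=J\cap[-n,n]$. For all $n$ larger than some $n_{0}$ (depending only on $\st{a}$, $\st{b}$ and $\st{u}$), the set $J_{n}$ is a non-infinitesimal interval with \emph{finite} endpoints, we have $u\in J_{n}$, and $J=\bigcup_{n\ge n_{0}}J_{n}$; this one family of truncations handles all four interval types and both finite and infinite endpoints at once. The goal is then to build a primitive $I_{n}$ of $f$ on each $J_{n}$ normalized by $I_{n}(u)=0$, to check that these agree on overlaps, and to patch them into a single smooth $I:J\ra\FR$.

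To produce $I_{n}$, I would first use Lem.~\ref{lem:extensionFcnOutsideBoundaries} to extend $f|_{J_{n}}$ across its (finite) endpoints to a smooth function $\bar{f}_{n}$ on a strictly larger interval; choosing a closed interval $[c_{n},d_{n}]$ with finite endpoints that contains $J_{n}$ and has the endpoints of $J_{n}$ as interior points, Thm.~\ref{thm:existenceOfIntegralForFiniteBoundaries} applied to $\bar{f}_{n}$ with base point $u$ yields a primitive $\bar{I}_{n}$ with $\bar{I}_{n}(u)=0$. I then set $I_{n}:=\bar{I}_{n}|_{J_{n}}$. Since the endpoints of $J_{n}$ are interior to $[c_{n},d_{n}]$, $\bar{I}_{n}$ has a genuine two-sided derivative equal to $f$ there, and by Lem.~\ref{lem:FermatReyesClosedInterval} and the Remark following it this restricts to the correct one-sided derivative of $I_{n}$ at any closed endpoint of $J_{n}$; hence $I_{n}'=f$ on all of $J_{n}$ and $I_{n}(u)=0$.

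Compatibility is where the constant function theorem does the real work. For $m\ge n\ge n_{0}$ the restriction $I_{m}|_{J_{n}}$ is again a primitive of $f$ on $J_{n}$ vanishing at $u$, so $(I_{m}-I_{n})|_{J_{n}}$ has zero derivative on $\text{int}(J_{n})$ and therefore, by Thm.~\ref{thm:constancyPrinciple} applied to the non-infinitesimal interval $J_{n}$, is constant; being $0$ at $u$ it vanishes, giving $I_{m}|_{J_{n}}=I_{n}$. Thus $I(x):=I_{n}(x)$ (for any $n$ with $x\in J_{n}$) is well defined on $J$. Smoothness is local in the Fermat topology, and every $x\in J$ has a relative Fermat neighborhood contained in some $J_{n}$ on which $I=I_{n}$, so $I$ is smooth; the same locality gives $I'=f$ on $J$ (with one-sided derivatives at the closed endpoints of $J$), and $I(u)=I_{n_{0}}(u)=0$. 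For uniqueness I would argue globally: if $I_{1}$ and $I_{2}$ both solve the problem, then $I_{1}-I_{2}$ has vanishing derivative on $\text{int}(J)$, hence is constant on $J$ by Thm.~\ref{thm:constancyPrinciple} (here using that $J$ itself is non-infinitesimal), and the normalization at $u$ forces the constant to be $0$.

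The main obstacle I anticipate is organizational rather than conceptual: making the exhaustion $J_{n}=J\cap[-n,n]$ interact cleanly with every interval type and with possibly infinite endpoints, so that (i) each $J_{n}$ genuinely has finite endpoints and is non-infinitesimal, (ii) the closed enlargement $[c_{n},d_{n}]$ coming from Lem.~\ref{lem:extensionFcnOutsideBoundaries} can be arranged to contain $J_{n}$ with its endpoints interior, and (iii) the one-sided derivatives at a closed endpoint of $J$ are correctly recovered from the two-sided derivatives of the $\bar{I}_{n}$. Once these points are checked, compatibility, smoothness, the identity $I'=f$, and uniqueness all follow formally from the constant function theorem and the locality of smoothness.
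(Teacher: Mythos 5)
Your proposal is correct and follows essentially the same route as the paper: exhaust $J$ by finite-endpoint truncations, apply Thm.~\ref{thm:existenceOfIntegralForFiniteBoundaries} on each piece, glue via the constant function Thm.~\ref{thm:constancyPrinciple} together with the locality (sheaf property) of smoothness, and identify $I'=f$ through the uniqueness of incremental ratios. The only differences are organizational: the paper details only the case $J=\FR$ (truncating by $[\st{u}-k,\st{u}+k]$, where no appeal to Lem.~\ref{lem:extensionFcnOutsideBoundaries} is needed, since those truncations are already closed subintervals of $J$) and declares the remaining cases similar, whereas your uniform truncation $J\cap[-n,n]$ plus the extension lemma handles all interval types at once, and your uniqueness argument applies Thm.~\ref{thm:constancyPrinciple} globally on $J$ instead of reducing to the finite-boundary case.
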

\begin{proof}
\noindent We proceed for the interval $(-\infty,+\infty)$, since
the other cases are similar. Set $r:=\st{u}$, and for each $k\in\N_{>0}$,
define
\[
f_{k}:=f|_{[r-k,r+k]}.
\]
By Thm.~\ref{thm:existenceOfIntegralForFiniteBoundaries}, for each
$k$ there exists a unique smooth function $I_{k}:[r-k,r+k]\ra\FR$
such that $I_{k}'(x)=f_{k}(x)=f(x)$ for $x\in\text{int}[r-k,r+k]=:V_{k}$
and $I_{k}(u)=0$. Note that $\left(V_{k}\right)_{k>0}$ is a Fermat
open cover of $\FR$. Moreover, since $I_{k}'(x)=f(x)=I'_{j}(x)$
for every $x\in V_{k}\cap V_{j}$ and $I_{k}(u)=I_{j}(u)$, by the
constant function Thm.~\ref{thm:constancyPrinciple}, $I_{k}$ and
$I_{j}$ coincide on $V_{k}\cap V_{j}$. By the sheaf property of
smooth functions, we get
\begin{equation}
\exists!\,I\in\FDiff(\FR,\FR):\ I|_{V_{k}}=I_{k}\quad\forall k\in\N_{>0}.\label{eq:uniquenessI}
\end{equation}

\noindent Note that for any $x\in\FR$, there exists $k=k(x)\in\N_{>0}$
such that $x\in\text{int}(V_{k})$. Therefore, for each $x\in\FR$
and for $h\in\FR$ sufficiently small, we have
\[
I(x+h)=I_{k}(x+h)=I(x)+h\cdot I'[x,h]=I_{k}(x)+h\cdot I'_{k}[x,h].
\]
This and the uniqueness Thm.~\ref{thm:uniqueness} imply that the
incremental ratios of $I$ and $I_{k}$ are equal, i.e., $I'[x,h]=I'_{k}[x,h]$.
Thus, $I'(x)=I'_{k}(x)=f(x)$, and finally $I(u)=I_{1}(u)=0$.

This proves the existence part.

\noindent Since the restriction of a primitive of $f$ to $[r-k,r+k]$
is a primitive of $f_{k}$ for each $k$, the uniqueness then follows
from Thm.~\ref{thm:existenceOfIntegralForFiniteBoundaries}.
\end{proof}
We can now define
\begin{defn}
\label{def:integral}Let $J$ be a non-infinitesimal interval of $\FR$.
Let $f:J\ra\FR$ be a smooth function, and let $u\in J$. We define
$\int_{u}^{(-)}f$ to be the unique function verifying the following
properties:
\begin{enumerate}[leftmargin=*,label=(\roman*),align=left ]
\item ${\displaystyle \int_{u}^{(-)}f:=\int_{u}^{(-)}f(s)\,\diff{s}:J\ra\FR}$
is a smooth function;
\item ${\displaystyle \int_{u}^{u}f=0}$;
\item ${\displaystyle \forall x\in J:\ \left(\int_{u}^{(-)}f\right)'(x)=\frac{\diff{}}{\diff{x}}\int_{u}^{x}f(s)\,\diff{s}=f(x)}$.
\end{enumerate}
\end{defn}
\begin{rem}
\noindent \label{rem:afterDefIntegral}\end{rem}
\begin{enumerate}
\item Note that, although we require the interval $J$ to be non-infinitesimal
for the domain of the smooth function $f$, the integral $\int_{u}^{v}f$
is also defined when $v\in J$ and $v\simeq u$. 
\item It is not hard to see that if $f,g:J\ra\FR$ are smooth functions,
$a,b\in J$ with $a<b$, and $f|_{(a,b)}=g|_{(a,b)}$, then $\int_{a}^{b}f=\int_{a}^{b}g$.
We get the same conclusion if $\st{a}<\st{b}$ and we assume only
$f|_{\text{int}(a,b)}=g|_{\text{int}(a,b)}$.
\item \noindent From Def.~\ref{def:integral}, we obtain a generalization
of the usual notion of integral. Indeed, for $a$, $b$, $u\in\R$
with $a<u<b$, let $f\in\Coo([a,b]_{\R},\R)$. We extend $f$ to an
ordinary smooth function defined on an open interval $(a-\delta,b+\delta)_{\R}$,
for some $\delta\in\R_{>0}$. We still use the symbol $f$ to denote
this extension. We consider the quasi-standard smooth function
\[
I:={}^{^{^{^{{\scriptstyle \bullet}}}}}\left(\Rint_{u}^{(-)}f(s)\diff{s}\right):\ext{(a-\delta,b+\delta)_{\R}}\ra\FR.
\]
Now, we have that
\[
[a,b]\subseteq\ext{(a-\delta,b+\delta)_{\R}}
\]
so that we can consider the restriction $I|_{[a,b]}$. It is not hard
to prove that this restriction verifies all the properties of the
previous Def.~\ref{def:integral} for the function $\ext{f}$. Meanwhile,
because $I$ is the extension of an ordinary smooth function, it also
verifies
\begin{equation}
\forall x\in[a,b]_{\R}:\ \int_{u}^{x}\ext{f}(s)\diff{s}=I(x)={}^{^{^{^{{\scriptstyle \bullet}}}}}\left(\Rint_{u}^{x}f(s)\diff{s}\right)=\Rint_{u}^{x}f(s)\diff{s}\in\R.\label{eq:RiemannInt}
\end{equation}

\end{enumerate}
The recursive definitions \eqref{eq:defI_1} and \eqref{eq:defI_i+1}
applied to $f|_{[u,v]}$ yield the following useful result:
\begin{lem}
\label{lem:IntegralAsfiniteSum}Let $a$, $b\in\extFR$ and let $J$
be a non-infinitesimal interval of $\FR$ with endpoints $a$, $b$.
Let $f:J\ra\FR$ be a smooth function, and let $u$, $v\in J$ with
$u\leq v$. Then there exist $n\in\N_{>0}$, $y$, $\bar{y}$, $\delta\in\R^{n}$,
$(\alpha_{i})_{i=1}^{n}$, $(p_{i})_{i=1}^{n}$ such that
\begin{enumerate}[leftmargin=*,label=(\roman*),align=left ]
\item $\alpha_{i}\in\Coo(U_{i}\times V_{i},\R)$ and $p_{i}\in\ext{U_{i}}$,
where $U_{i}\subseteq\R^{{\sf p}_{i}}$, $V_{i}=(y_{i}-\delta_{i},y_{i}+\delta_{i})_{\R}$,
$\delta_{i}\in\R_{>0}$, $\forall i=1,\ldots,n$
\item $f(x)=\ext{\alpha_{i}}(p_{i},x)\quad\forall x\in\ext{V_{i}}\cap J\ \forall i=1,\ldots,n$
\item $\bar{y}_{i}\in(y_{i+1}-\delta_{i+1},y_{i}+\delta_{i})_{\R}=V_{i}\cap V_{i+1}\neq\emptyset\quad\forall i=1,\ldots,n-1$ 
\item $\st{u}=y_{1}<y_{2}<\ldots<y_{n}=\st{v}$ if $\st{u}<\st{v}$ and
$\st{u}=x_{1}=\st{v}$ otherwise
\item $\int_{u}^{v}f=\int_{u}^{\bar{y}_{1}}\ext{\alpha_{1}}(p_{1},s)\diff{s}+\int_{\bar{y}_{1}}^{\bar{y}_{2}}\ext{\alpha_{2}}(p_{2},s)\diff{s}+\ldots+\int_{\bar{y}_{n-1}}^{v}\ext{\alpha_{n}}(p_{n},s)\diff{s},$
and a little-oh polynomial representing $\int_{u}^{v}f$ is given
by
\[
t\in\R_{\ge0}\mapsto\Rint_{u_{t}}^{\bar{y}_{1}}\alpha_{1}((p_{1})_{t},s)\diff{s}+\ldots+\Rint_{\bar{y}_{n-1}}^{v_{t}}\alpha_{n}((p_{n})_{t},s)\diff{s}\in\R.
\]

\end{enumerate}
\end{lem}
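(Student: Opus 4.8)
The plan is to run the construction from the proof of Thm.~\ref{thm:existenceOfIntegralForFiniteBoundaries} on the restriction $f|_{[u,v]}$ and then simply to unwind the recursive definitions \eqref{eq:defI_1} and \eqref{eq:defI_i+1}. Since $J$ is an interval of $\FR$ and $u\le v$ lie in $J$, the total order forces $[u,v]\subseteq J$, so $f|_{[u,v]}$ is a smooth function to which the earlier theorem applies whenever $\st{u}<\st{v}$. First I would dispose of the degenerate case $\st{u}=\st{v}$ (the ``otherwise'' branch of (iv)): here I take $n=1$, a real $\delta_1>0$, an ordinary smooth $\alpha_1\in\Coo(U_1\times V_1,\R)$ with $V_1=(\st{u}-\delta_1,\st{u}+\delta_1)_\R$ and a parameter $p_1\in\ext{U_1}$ such that $f(x)=\ext{\alpha_1}(p_1,x)$ for all $x\in\ext{V_1}\cap J$, which is just the local description of the quasi-standard smooth $f$ near $\st{u}=\st{v}$. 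Then (iii) is vacuous, (iv) holds with $y_1=\st u=\st v$, and (v) reduces to $\int_u^v f=\int_u^v\ext{\alpha_1}(p_1,s)\diff s$, whose little-oh representative is $t\mapsto\Rint_{u_t}^{v_t}\alpha_1((p_1)_t,s)\diff s$ by the very definition of the Fermat integral as extension of a parametrized Riemann integral.

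For $\st u<\st v$, I would apply verbatim the compactness argument of Thm.~\ref{thm:existenceOfIntegralForFiniteBoundaries} to $f|_{[u,v]}$: cover the compact set $[\st u,\st v]_\R$ by the open sets $V_x=(\st x-\delta_x,\st x+\delta_x)_\R$ attached to the local charts of $f$, extract a finite subcover, and, shrinking radii, relabel so that $\st u=y_1<\dots<y_n=\st v$ with the overlap condition $y_i-\delta_i<y_{i+1}-\delta_{i+1}<y_i+\delta_i<y_{i+1}+\delta_{i+1}$. This delivers the data $(\alpha_i,p_i,y_i,\delta_i)$ of (i), the representations (ii) valid on $\ext{V_i}\cap J$ (these come from the smoothness of $f$ on all of $J$, which is why the bound is $J$ and not merely $[u,v]$), the intermediate real points $\bar y_i\in V_i\cap V_{i+1}\neq\emptyset$ of (iii), and the ordering (iv).

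It remains to establish the telescoping identity (v). By Def.~\ref{def:integral} together with the uniqueness part of Thm.~\ref{thm:existenceOfIntegralForFiniteBoundaries}, $\int_u^v f$ equals the value $I(v)$ of the primitive obtained by patching the $I_i$ with base point $u$; since $\st v=y_n\in V_n$ we have $v\in\ext{V_n}$ and hence $I(v)=I_n(v)$. Iterating \eqref{eq:defI_i+1} gives $I_n(v)=\ext{\iota_n}(p_n,v)+I_{n-1}(\bar y_{n-1})$, and each descent peels off one summand, namely $\ext{\iota_i}(p_i,\bar y_i)=\int_{\bar y_{i-1}}^{\bar y_i}\ext{\alpha_i}(p_i,s)\diff s$ for the interior steps, with top term $\ext{\iota_n}(p_n,v)=\int_{\bar y_{n-1}}^{v}\ext{\alpha_n}(p_n,s)\diff s$ and, at the bottom, $\ext{\iota_1}(p_1,\bar y_1,u)=\int_u^{\bar y_1}\ext{\alpha_1}(p_1,s)\diff s$ from \eqref{eq:defI_1}. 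This collapses exactly to the sum displayed in (v). For the little-oh representative, each piece $\ext{\iota_i}$ is, by construction, the extension of a parametrized Riemann integral, so it is represented through representatives by $t\mapsto\Rint_{c_t}^{d_t}\alpha_i((p_i)_t,s)\diff s$; here only $u$ and $v$ contribute $t$-dependent endpoints, while the inner $\bar y_i\in\R$ satisfy $(\bar y_i)_t=\bar y_i$. Since addition in $\FR$ is represented componentwise, summing these representatives yields precisely the stated little-oh polynomial for $\int_u^v f$.

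The whole argument is routine once the earlier construction is invoked; the one point demanding care is the bookkeeping in unwinding the recursion, so that the lower limit of the first integral is the genuinely nonstandard point $u$ (not a real number) while the upper limit of the last is $v$, and correspondingly that only these two extreme limits carry $t$-dependent endpoints in the little-oh representative. I expect no essential obstacle beyond this indexing, since existence, uniqueness, and the local-chart structure are all already supplied by Thm.~\ref{thm:existenceOfIntegralForFiniteBoundaries} and Def.~\ref{def:integral}.
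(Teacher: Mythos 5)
Your proposal is correct and follows essentially the paper's own route: the paper gives no separate proof of this lemma beyond the remark that the recursive definitions \eqref{eq:defI_1} and \eqref{eq:defI_i+1} applied to $f|_{[u,v]}$ yield the result, and your compactness argument plus the unwinding of that recursion (with the base-point bookkeeping at $u$ and $v$) is precisely the intended argument. The only slightly loose point is the degenerate case $\st{u}=\st{v}$, where the identity $\int_u^v f=\ext{\iota_1}(p_1,v,u)$ holds not ``by the very definition'' but by the same uniqueness-of-primitives argument you already use (compare both sides on a non-infinitesimal subinterval of $J$ containing $[u,v]$ and covered by the single chart $V_1$), so nothing essential is missing.
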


\section{\label{sec:Classical-formulas-of}Classical formulas of integral
calculus}

In this section and the first part of next, we will prove the classical
formulas of integral calculus.

The first results concerning the integral calculus are the following:
\begin{thm}
\label{thm:classicalFormulasOfIntegralCalculus}Let $J$ be a non-infinitesimal
interval of $\FR$, let $f,g:J\ra\FR$ be smooth functions, and let
$u,$ $v\in J$. Then we have:
\begin{enumerate}[leftmargin=*,label=(\roman*),align=left ]
\item \label{enu:additivity}$\int_{u}^{v}\left(f+g\right)=\int_{u}^{v}f+\int_{u}^{v}g$
\item \label{enu:homog}$\int_{u}^{v}\lambda f=\lambda\int_{u}^{v}f\quad\forall\lambda\in\FR$
\item \label{enu:foundamental}$\int_{u}^{v}f'=f(v)-f(u)$
\item \label{enu:intByParts}$\int_{u}^{v}(f'\cdot g)=\left[f\cdot g\right]_{u}^{v}-\int_{u}^{v}(f\cdot g')$
\end{enumerate}
\end{thm}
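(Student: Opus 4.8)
The plan is to deduce all four identities from the uniqueness of primitives. By Thm.~\ref{thm:existenceOfIntegralForInfiniteBoundaries} (equivalently, by the constant function Thm.~\ref{thm:constancyPrinciple}), two smooth functions $J\to\FR$ that agree at one point of $J$ and whose derivatives agree on $\text{int}(J)$ must coincide on all of $J$. Hence, to establish an identity $\int_u^{(-)}F=\Phi$ it is enough to exhibit $\Phi\in\FDiff(J,\FR)$ with $\Phi(u)=0$ and $\Phi'=F$ on $\text{int}(J)$. The only genuine computations are then the two elementary rules of differentiation --- linearity $(\lambda f+\mu g)'=\lambda f'+\mu g'$ for constants $\lambda,\mu\in\FR$, and the Leibniz rule $(f\cdot g)'=f'\cdot g+f\cdot g'$ --- both of which I would read off from the Fermat-Reyes Thm.~\ref{thm:Fermat-Reyes-h-small}: writing $f(x+h)=f(x)+h\cdot r_f(x,h)$ and $g(x+h)=g(x)+h\cdot r_g(x,h)$, expanding $(\lambda f+\mu g)(x+h)$ and $(fg)(x+h)$ and evaluating the resulting incremental ratios at $h=0$.

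For \ref{enu:additivity} and \ref{enu:homog}, put $F:=\int_u^{(-)}f$ and $G:=\int_u^{(-)}g$. The combination $\lambda F+\mu G$ is smooth (addition and scalar multiplication are morphisms in $\FDiff$), it vanishes at $u$, and by linearity of the derivative it has derivative $\lambda f+\mu g$ on $\text{int}(J)$; uniqueness of primitives therefore identifies it with $\int_u^{(-)}(\lambda f+\mu g)$. Evaluating at $v$ and specialising $(\lambda,\mu)=(1,1)$, resp.\ $\mu=0$, gives the two identities. For the fundamental theorem \ref{enu:foundamental}, the function $\Phi(x):=f(x)-f(u)$ is smooth, satisfies $\Phi(u)=0$, and has $\Phi'=f'$ on $\text{int}(J)$ because $f(u)$ is constant; hence $\Phi=\int_u^{(-)}f'$, and evaluation at $v$ yields $\int_u^v f'=f(v)-f(u)$.

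Finally \ref{enu:intByParts} is assembled from the previous parts: the product $f\cdot g$ is smooth, so by \ref{enu:foundamental} one has $\int_u^v (f\cdot g)'=[f\cdot g]_u^v$, while the Leibniz rule $(f\cdot g)'=f'\cdot g+f\cdot g'$ lets me split this integral by \ref{enu:additivity}; rearranging gives $\int_u^v(f'\cdot g)=[f\cdot g]_u^v-\int_u^v(f\cdot g')$. I expect no serious obstacle here: the theorem is essentially organizational, the one substantive computation being the Leibniz rule, and the only point demanding a little attention is that uniqueness requires the derivatives to agree merely on $\text{int}(J)$ --- which is fortunate, since it lets me avoid any discussion of one-sided derivatives at the endpoints $a,b$, where only the restricted Fermat-Reyes of Lem.~\ref{lem:FermatReyesClosedInterval} is available.
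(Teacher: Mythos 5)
Your proposal is correct and follows essentially the same route as the paper: both identify each side of the identities as the unique primitive vanishing at $u$ (via the constant function theorem / uniqueness of primitives), with the linearity and Leibniz rules for derivatives as the only computations. The only cosmetic difference is in \ref{enu:intByParts}, where you deduce it from \ref{enu:additivity} and \ref{enu:foundamental} applied to $f\cdot g$, whereas the paper directly verifies that $H(x):=f(x)\cdot g(x)-f(u)\cdot g(u)-\int_{u}^{x}(f\cdot g')$ satisfies the characterizing properties of $\int_{u}^{(-)}(f'\cdot g)$ --- the same argument in a slightly different packaging.
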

We will prove the additivity with respect to the integration intervals,
i.e., $\int_{u}^{v}f+\int_{v}^{w}f=\int_{u}^{w}f$, after the proof
of the commutativity of differentiation and integration in next section.
The latter relies on the quasi-standard smoothness of the function
$x\mapsto\int_{0}^{1}f(s,x)\diff{s}$, which will be proved as a consequence
of the $\FDiff$ smoothness of suitable infinite dimensional integral
operators. 
\begin{rem}
\label{rem:integral-local}In fact, one can prove the above properties
together with the additivity with respect to the integration intervals
using Lem.~\ref{lem:IntegralAsfiniteSum}. As an example, we prove
in this remark that $\int_{u}^{v}f+\int_{v}^{w}f=\int_{u}^{w}f$ for
any smooth function $f$ defined on a non-infinitesimal inteval $J$
of $\FR$ with $u,v,w\in J$. By Lem.~\ref{lem:IntegralAsfiniteSum},
we can write 

\[
\int_{u}^{v}f=\int_{u}^{\bar{y}_{1}}\ext{\alpha_{1}}(p_{1},s)ds+\ldots+\int_{\bar{y}_{n-1}}^{v}\ext{\alpha_{n}}(p_{n},s)ds
\]

\[
\int_{v}^{w}f=\int_{v}^{\bar{z}_{1}}\ext{\beta_{1}}(q_{1},s)ds+\ldots+\int_{\bar{z}_{m-1}}^{w}\ext{\beta_{m}}(q_{m},s)ds.
\]
Since the function $f$ is quasi-standard smooth, we may assume that
$\alpha_{n}=\beta_{1}$, i.e., $\alpha_{n}(p_{1},x)=f(x)=\beta_{1}(q_{1},x)$
for every $x\in(\bar{y}_{n-1},\bar{z}_{1})$. The conclusion then
follows from the additivity of ordinary integral with respect to ordinary
integration intervals. 

This proof uses local expressions of quasi-standard smooth functions.
The proofs given below are of ``global'' nature, which is the point
we try to emphasize in the present work.\end{rem}
\begin{proof}
\noindent In the following, we will use the notations
\[
F:=\int_{u}^{(-)}f:J\ra\FR\quad,\quad G:=\int_{u}^{(-)}g:J\ra\FR.
\]
To prove \ref{enu:additivity}, it suffices to note that $F+G$ is
smooth and
\begin{align*}
\left(F+G\right)(u) & =F(u)+G(u)=0,\\
\left(F+G\right)'(x) & =F'(x)+G'(x)=f(x)+g(x)=\left(f+g\right)(x)\quad\forall x\in J.
\end{align*}
From the uniqueness of Def.~\ref{def:integral}, the conclusion $F+G=\int_{u}^{(-)}\left(f+g\right)$
follows.

For the proofs of the other properties, we can follow the same method:
we define a suitable quasi-standard smooth function $H$ starting
from the right hand side of the equality we need to prove, and we
prove that $H$ verifies the same properties that characterize uniquely
the function on the left hand side. For example, to prove \ref{enu:homog}
we consider $H(x):=\lambda F(x)$; to prove \ref{enu:foundamental}
we define $H(x):=f(x)-f(u)$; and finally to prove \ref{enu:intByParts}
we set $H(x):=f(x)\cdot g(x)-f(u)\cdot g(u)-\int_{u}^{x}(f\cdot g')$.
\end{proof}
Here is the result of \emph{integration by substitution}:
\begin{thm}
\label{thm:integrationByChangeOfVariable}Let $J$, $J_{1}$ be non-infinitesimal
intervals of $\FR$, let $u$, $v\in J_{1}$, and let
\[
J_{1}\xrad{\phi}J\xrad{f}\FR
\]
be smooth functions. Then
\[
\int_{\phi(u)}^{\phi(v)}f(t)\diff{t}=\int_{u}^{v}f\left[\phi(s)\right]\cdot\phi'(s)\diff{s}.
\]
\end{thm}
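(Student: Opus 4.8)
The plan is to follow the strategy used repeatedly in this section: fix $u$, regard both sides as functions of $v\in J_{1}$, build a smooth function out of one side, and identify it, via the uniqueness of primitives, as a primitive of the other side's integrand. Write $F:=\int_{\phi(u)}^{(-)}f:J\ra\FR$ for the primitive of $f$ vanishing at $\phi(u)$ (Thm.~\ref{thm:existenceOfIntegralForInfiniteBoundaries}), so that by construction the left-hand side equals $F(\phi(v))$. On the right, set $g(s):=f[\phi(s)]\cdot\phi'(s)$; this is smooth because $f\circ\phi$ is a composite of smooth maps and $\phi'(s)=r_{\phi}(s,0)$ is smooth by the Fermat--Reyes Thm.~\ref{thm:Fermat-Reyes-h-small}, so $\int_{u}^{(-)}g$ is well defined and the right-hand side equals $\bigl(\int_{u}^{(-)}g\bigr)(v)$.

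It then suffices to prove that $H:=F\circ\phi:J_{1}\ra\FR$ coincides with $\int_{u}^{(-)}g$. Both are smooth and both vanish at $u$ (indeed $H(u)=F(\phi(u))=0$), so by the uniqueness clause of Thm.~\ref{thm:existenceOfIntegralForInfiniteBoundaries} (equivalently, the constant function Thm.~\ref{thm:constancyPrinciple} applied to $H-\int_{u}^{(-)}g$) it is enough to check that $H'(s)=g(s)$ for all $s\in J_{1}$, using $F'=f$ from Def.~\ref{def:integral}. In other words, the whole theorem reduces to the chain rule $(F\circ\phi)'=(F'\circ\phi)\cdot\phi'$, and this is the crux of the argument.

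To establish it I would compose the two Fermat--Reyes expansions. For $s$ in the interior of $J_{1}$ and $h$ sufficiently small, Thm.~\ref{thm:Fermat-Reyes-h-small} gives $\phi(s+h)=\phi(s)+h\cdot r_{\phi}(s,h)$; setting $k:=h\cdot r_{\phi}(s,h)$ and expanding $F$ about $\phi(s)$ yields
\[
H(s+h)=F\bigl(\phi(s)+k\bigr)=F(\phi(s))+k\cdot r_{F}(\phi(s),k)=H(s)+h\cdot R(s,h),
\]
where $R(s,h):=r_{\phi}(s,h)\cdot r_{F}(\phi(s),h\cdot r_{\phi}(s,h))$ is smooth in $(s,h)$. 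By uniqueness of the incremental ratio, $H'(s)=R(s,0)=\phi'(s)\cdot r_{F}(\phi(s),0)=\phi'(s)\cdot f(\phi(s))=g(s)$. The delicate point is that the expansion of $F$ must be legitimate even when $\phi(s)$ is a \emph{boundary} point of $J$: here I would extend $F$ to a Fermat open neighbourhood of $J$ (Lem.~\ref{lem:extensionFcnOutsideBoundaries}) and invoke the boundary version of Fermat--Reyes (Lem.~\ref{lem:FermatReyesClosedInterval}), checking that $(\phi(s),k)$ lies in the relevant thickening. This holds because the whole segment $\phi(s)+\sigma k=(1-\sigma)\phi(s)+\sigma\phi(s+h)$, for $\sigma\in[0,1]_{\R}$, is a real convex combination of the points $\phi(s),\phi(s+h)\in J$, and intervals of $\FR$ are closed under such combinations.

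Once $H=\int_{u}^{(-)}g$ is established, evaluating at $v$ gives $F(\phi(v))=\bigl(\int_{u}^{(-)}g\bigr)(v)$, which is exactly the asserted identity. I expect the main obstacle to be precisely the boundary bookkeeping in the chain rule: matching the ``$\forall^{0}h$'' neighbourhoods for $\phi$ and for $F$ and ensuring the composite increment $k$ stays in the thickening on which $r_{F}$ is defined. The interior case is immediate, so essentially all the real work concentrates at those $s$ where $\phi$ sends an interior point of $J_{1}$ to an endpoint of $J$.
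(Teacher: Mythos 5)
Your proposal is correct and is essentially the paper's own proof: the paper defines the same three functions $F:=\int_{\phi(u)}^{(-)}f$, $H:=F\circ\phi$ and $G:=\int_{u}^{(-)}f\left[\phi(s)\right]\cdot\phi'(s)\diff{s}$, observes $H(u)=G(u)=0$ and $H'(y)=f\left[\phi(y)\right]\cdot\phi'(y)=G'(y)$, and concludes $H=G$ from the uniqueness of primitives. The only divergence is that the paper dispatches the crux you labour over, the chain rule $(F\circ\phi)'=(F'\circ\phi)\cdot\phi'$, by citing \citep[Thm.~29]{Gio10e}, whereas you re-derive it by composing Fermat--Reyes expansions with boundary bookkeeping via Lem.~\ref{lem:extensionFcnOutsideBoundaries} and Lem.~\ref{lem:FermatReyesClosedInterval}; that derivation is sound but can simply be replaced by the citation.
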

\begin{proof}
\textit{\emph{Define
\begin{align*}
F(x) & :=\int_{\phi(u)}^{x}f\quad\forall x\in J\\
H(y) & :=\int_{\phi(u)}^{\phi(y)}f\quad\forall y\in J_{1}\\
G(y) & :=\int_{u}^{y}f\left[\phi(s)\right]\cdot\phi'(s)\diff{s}\quad\forall y\in J_{1}.
\end{align*}
Each one of these functions is smooth because of Def.~\ref{def:integral}
(we recall that $u\in J_{1}$ is fixed) or because it can be written
as composition of smooth functions. We have $H(u)=G(u)=0$, $H(y)=F\left[\phi(y)\right]$
for every $y\in J_{1}$, and by the chain rule (\citep[Thm. 29]{Gio10e})
$H'(y)=F'[\phi(y)]\cdot\phi'(y)=f\left[\phi(y)\right]\cdot\phi'(y)=G'(y)$
for each $y\in J_{1}$. From the uniqueness of Thm.~\ref{thm:existenceOfIntegralForFiniteBoundaries}
the conclusion $H=G$ follows.}}
\end{proof}

\section{\label{sec:Infinite-dimensional-integral}Infinite dimensional integral
operators}

\noindent If we want to prove, in our framework, that
\begin{equation}
\frac{\diff{}}{\diff{x}}\left(\int_{0}^{1}f(s,x)\diff{s}\right)=\int_{0}^{1}\frac{\partial f}{\partial x}(s,x)\diff{s},\label{eq:diffIntSign}
\end{equation}
we need to prove that the function defined by
\begin{equation}
x\in[a,b]\mapsto\int_{0}^{1}f(s,x)\diff{s}\in\FR\label{eq:integrandSmooth}
\end{equation}
is smooth. Of course, in \eqref{eq:diffIntSign} $\frac{\partial f}{\partial x}(s,x):=\left[f(s,-)\right]'(x)$.
The smoothness of \eqref{eq:integrandSmooth} follows as a trivial
consequence of the smoothness of the function defined by
\begin{equation}
I_{a}^{b}:(f,u,v)\in\FDiff([a,b],\FR)\times[a,b]^{2}\mapsto\int_{u}^{v}f\in\FR.\label{eq:integralOperator}
\end{equation}
In fact, the function defined in \eqref{eq:integrandSmooth} can be
written as $I_{0}^{1}(-,0,1)\circ\left(f\circ\tau\right)^{\wedge}$,
where\footnote{\noindent See \citep[Def. 1]{Gio10e} for the notations $(-)^{\wedge}$
and $(-)^{\vee}$.}
\[
\tau:(x,s)\in[a,b]\times[0,1]\mapsto(s,x)\in[0,1]\times[a,b].
\]
Therefore, the smoothness of \eqref{eq:integrandSmooth} follows from
the smoothness of \eqref{eq:integralOperator} and the Cartesian closedness
of the category $\FDiff$ of Fermat spaces. For simplicity, in the
following we will denote by $\FR^{[a,b]}$ the space $\FDiff([a,b],\FR)$.

From Thm.~\ref{thm:integrationByChangeOfVariable}, we have
\[
\int_{u}^{v}f=\int_{0}^{1}f\left[y\cdot(v-u)+u\right]\cdot(v-u)\diff{y}.
\]
Thereby, setting
\begin{align*}
\psi(f,u,v):y & \in[0,1]\mapsto f\left[y\cdot(v-u)+u\right]\cdot(v-u)\in\FR\\
I:f & \in\FR^{[0,1]}\mapsto\int_{0}^{1}f\in\FR,
\end{align*}
it is easy to show that
\[
\psi:\FR^{[a,b]}\times[a,b]^{2}\ra\FR^{[0,1]}\text{ is smooth}
\]
and $I_{a}^{b}(f,u,v)=I\left[\psi(f,u,v)\right]$. Therefore, the
following result suffices to prove that the function $I_{a}^{b}$
is smooth.
\begin{lem}
\label{lem:smoothIntegralOperatorOn_0_1}The function
\[
I:f\in\FR^{[0,1]}\mapsto\int_{0}^{1}f\in\FR
\]
is smooth, i.e.,~it is an arrow of the category $\FDiff$.\end{lem}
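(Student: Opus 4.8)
The plan is to unwind the definition of an arrow in $\FDiff$ and thereby reduce the smoothness of the infinite dimensional operator $I$ to a finite dimensional statement, which can then be handled with the local decomposition of Lem.~\ref{lem:IntegralAsfiniteSum}. Recall that a map between Fermat spaces is an arrow of $\FDiff$ precisely when it sends figures to figures, i.e.\ when its composite with every quasi-standard smooth plot is again quasi-standard smooth. Hence it suffices to fix an arbitrary figure $d\colon S\ra\FR^{[0,1]}$ of $\FR^{[0,1]}$, where $S\subseteq\FR^{\sf s}$, and to prove that $I\circ d\colon S\ra\FR$ is quasi-standard smooth. By the Cartesian closedness of $\FDiff$, the figure $d$ corresponds to its uncurried form $g:=d^{\vee}\in\FDiff(S\times[0,1],\FR)$, and for every $x\in S$ we have $(I\circ d)(x)=\int_{0}^{1}g(x,s)\,\diff{s}$, the integral of the smooth function $g(x,-)\colon[0,1]\ra\FR$ in the sense of Def.~\ref{def:integral}. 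The problem is thus reduced to showing that the finite dimensional map $x\in S\mapsto\int_{0}^{1}g(x,-)\in\FR$ is quasi-standard smooth.

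To establish this locally, I would fix $\bar{x}\in S$ and exploit the quasi-standard smoothness of $g$ together with the compactness of $[0,1]_{\R}$. For each standard $\sigma\in[0,1]_{\R}$ the point $(\bar{x},\sigma)$ admits a local representation $g(x,s)=\ext{\gamma_{\sigma}}(q_{\sigma},x,s)$, valid on a Fermat open set $\ext{(W_{\sigma}\times V_{\sigma})}$ with $\st{\bar{x}}\in W_{\sigma}\subseteq\R^{\sf s}$ and $\sigma\in V_{\sigma}\subseteq\R$, for some ordinary smooth $\gamma_{\sigma}$ and some fixed parameter $q_{\sigma}$. Choosing a finite subcover $V_{\sigma_{1}},\ldots,V_{\sigma_{n}}$ of $[0,1]_{\R}$ and setting $W:=\bigcap_{i}W_{\sigma_{i}}$, all these representations hold simultaneously on $\ext{W}\cap S$, so the decomposition data of Lem.~\ref{lem:IntegralAsfiniteSum} may be chosen \emph{uniformly} for every $x$ with $\st{x}\in W$. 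Applying that lemma to $g(x,-)$, with $(q_{i},x)$ bundled into the parameter (legitimate, since $x$ is itself a Fermat real), yields for all such $x$ both the additive decomposition $\int_{0}^{1}g(x,-)=\sum_{i=1}^{n}\int_{\bar{y}_{i-1}}^{\bar{y}_{i}}\ext{\gamma_{i}}(q_{i},x,s)\,\diff{s}$ (with the convention $\bar{y}_{0}=0$, $\bar{y}_{n}=1$) and an explicit little-oh polynomial representative of each summand.

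The key identity is then immediate from that representative: setting $\Gamma_{i}(q,x'):=\Rint_{\bar{y}_{i-1}}^{\bar{y}_{i}}\gamma_{i}(q,x',s)\,\diff{s}$, which is an ordinary smooth function of $(q,x')$ by the classical theorem on differentiation under the (Riemann) integral sign over the compact interval $[\bar{y}_{i-1},\bar{y}_{i}]_{\R}$, one sees that $\ext{\Gamma_{i}}(q_{i},x)$ and $\int_{\bar{y}_{i-1}}^{\bar{y}_{i}}\ext{\gamma_{i}}(q_{i},x,s)\,\diff{s}$ are represented by the very same little-oh polynomial $t\mapsto\Rint_{\bar{y}_{i-1}}^{\bar{y}_{i}}\gamma_{i}((q_{i})_{t},x_{t},s)\,\diff{s}$, hence coincide in $\FR$. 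Summing over $i$ and writing $\Gamma:=\sum_{i}\Gamma_{i}$ and $q:=(q_{1},\ldots,q_{n})$, one obtains $\int_{0}^{1}g(x,-)=\ext{\Gamma}(q,x)$ for all $x\in\ext{W}\cap S$, which is exactly the local form \eqref{eq:quasi-std-def} of a quasi-standard smooth function. As $\bar{x}\in S$ was arbitrary, $I\circ d$ is quasi-standard smooth, and therefore $I$ is an arrow of $\FDiff$.

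I expect the main obstacle to be the passage from the infinite dimensional domain to the finite dimensional statement and, within the latter, the \emph{uniformity in the parameter $x$} of the patching: one must verify that a single finite choice of local charts $\gamma_{i}$, parameters $q_{i}$ and subdivision points $\bar{y}_{i}$ serves all $x$ in a Fermat neighborhood of $\bar{x}$. A secondary point requiring care is circularity: the additivity over adjacent integration intervals used above must be drawn from Lem.~\ref{lem:IntegralAsfiniteSum} (equivalently Rem.~\ref{rem:integral-local}), which is independent of the present statement, and \emph{not} from the later Cor.~\ref{cor:integralAdditiveDomain}, whose proof rests on this lemma.
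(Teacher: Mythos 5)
Your proof is correct and follows essentially the same route as the paper's: reduce to figures of $\FR^{[0,1]}$ and uncurry via Cartesian closedness, use compactness of $[0,1]_{\R}$ to obtain finitely many local representations valid uniformly for all $x$ in a Fermat neighborhood, apply Lem.~\ref{lem:IntegralAsfiniteSum}, and conclude by the classical differentiation under the integral sign that the resulting map is locally the extension $\ext{\Gamma}(q,-)$ of an ordinary smooth function with parameters. The only difference is cosmetic: you spell out, via the common little-oh polynomial representative, why $\ext{\Gamma_{i}}(q_{i},x)$ equals the Fermat integral $\int_{\bar{y}_{i-1}}^{\bar{y}_{i}}\ext{\gamma_{i}}(q_{i},x,s)\,\diff{s}$, a step the paper leaves implicit in Lem.~\ref{lem:IntegralAsfiniteSum}(v).
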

\begin{proof}
Take a figure of $\FR^{[0,1]}$, i.e., let
\begin{align}
S & \subseteq\FR^{\sf s}\nonumber \\
d & \in\FDiff(S,\FR^{[0,1]}).\label{eq:figureOfExp}
\end{align}
Recall that here we use the simplified symbol $[0,1]$ to denote the
corresponding subspace of $\FR$, i.e.,\ $\left([0,1]\prec\FR\right)$.
From Property 5(i) of Thm.~18 in \citep{Gio10e} we have that $[0,1]=\left([0,1]\prec\FR\right)=\overline{[0,1]}$.
Therefore, from \eqref{eq:figureOfExp}, we know that $d^{\vee}:\bar{S}\times\overline{[0,1]}\ra\FR$
is smooth. We need to prove that the function
\begin{equation}
s\in S\mapsto\int_{0}^{1}d(s)=\int_{0}^{1}d^{\vee}(s,u)\diff{u}\in\FR\label{eq:thesis}
\end{equation}
is smooth, i.e.,\ that it belongs to $\SFR(S,\FR)$. From Properties
5(h) and 5(i) of Thm.~18 in \citep{Gio10e}, we have that
\begin{align*}
\bar{S}\times\overline{[0,1]} & =\left(S\prec\FR^{\sf s}\right)\times\left([0,1]\prec\FR\right)\\
 & =\left((S\times[0,1])\prec\FR^{\sf s+1}\right)\\
 & =\overline{S\times[0,1]}.
\end{align*}
Hence, $d^{\vee}:\overline{S\times[0,1]}\ra\FR$ is smooth, i.e.,
\begin{equation}
d^{\vee}:S\times[0,1]\ra\FR\text{ is an arrow of }\SFR,\label{eq:2_d_cohat}
\end{equation}
because $\SFR$ is fully embedded in the category $\FDiff$ of Fermat
spaces (see \citep[Thm. 18(4)]{Gio10e}). Property \eqref{eq:2_d_cohat},
for every $(s,u)\in S\times[0,1]$, yields the existence of
\begin{align*}
p_{s,u} & \in\FR^{{\sf p}_{s,u}}\\
\mathcal{U}_{s,u} & \text{ Fermat open neighbourhood of }p_{s,u}\text{ defined by }U_{s,u}\\
\mathcal{V}_{s,u} & \text{ Fermat open neighbourhood of }(s,u)\text{ defined by \ensuremath{V_{s,u}}}\text{ in }S\times[0,1]\\
\alpha_{s,u} & \in\mathcal{C}^{\infty}(U_{s,u}\times V_{s,u},\R)
\end{align*}
such that $d^{\vee}|_{\mathcal{V}_{s,u}}=\ext{\alpha_{s,u}}(p_{s,u},-)|_{\mathcal{V}_{s,u}}$.
Now we fix $s\in S$, and everything above only depends on $u\in[0,1]$.
Since $(s,u)\in\mathcal{V}_{u}=\ext{V_{u}}\cap\left(S\times[0,1]\right)$
and $V_{u}$ is open in $\R^{\sf s+1}$, we have that $(\st{s},\st{u})\in V_{u}$.
Thus, we may always assume that $V_{u}=A_{u}\times B_{u}$, for some
open subsets $A_{u}$ in $\R^{\sf s}$ and $B_{u}$ in $\R$, with
$(\st{s},\st{u})\in A_{u}\times B_{u}$. Hence, $(s,u)\in\ext{\left(A_{u}\times B_{u}\right)}=\ext{A_{u}}\times\ext{B_{u}}=\ext{V_{u}}$.
In this way, we obtain an open cover $\left(B_{u}\right)_{u\in[0,1]_{\R}}$
of $[0,1]_{\R}$, and from it we can extract a finite subcover. Therefore,
we can find $u_{1},\dots,u_{n}\in[0,1]_{\R}$ such that $\left(B_{u_{i}}\right)_{i=1}^{n}$
is an open cover of $[0,1]_{\R}$. As in Thm.~\ref{thm:existenceOfIntegralForFiniteBoundaries},
we may assume to have chosen the indices $i=1,\dots,n$ and have found
suitable $\delta_{i}$'s in $\R_{>0}$ so that
\begin{align*}
 & 0=u_{1}<u_{2}<\dots<u_{n}=1\\
 & B_{u_{i}}=(u_{i}-\delta_{i},u_{i}+\delta_{i})_{\R}\\
 & B_{u_{i}}\cap B_{u_{i+1}}=(u_{i+1}-\delta_{i+1},u_{i}+\delta_{i})_{\R}\neq\emptyset\quad\forall i=1,\dots,n-1.
\end{align*}
Finally, set $A:=A_{u_{1}}\cap\dots\cap A_{u_{n}}$, which is an open
neighbourhood of $\st{s}$, so that $s\in\ext{A}$. Take a point $\bar{u}_{i}\in(u_{i+1}-\delta_{i+1},u_{i}+\delta_{i})_{\R}$.
For every $x\in\ext{A}\cap S$, we have
\[
d^{\vee}(x,u)=\ext{\alpha_{u_{i}}}(p_{u_{i}},x,u)\quad\forall u\in\ext{B_{u_{i}}}\cap[0,1],
\]
and hence Lem.\ \ref{lem:IntegralAsfiniteSum} implies that
\begin{align*}
\int_{0}^{1}d^{\vee}(x,u)\diff{u} & =\int_{0}^{\bar{u}_{1}}\ext{\alpha_{u_{1}}}(p_{u_{1}},x,s)\diff{s}+\int_{\bar{u}_{1}}^{\bar{u}_{2}}\ext{\alpha_{u_{2}}}(p_{u_{2}},x,s)\diff{s}+\dots\\
 & \phantom{=}+\int_{\bar{u}_{n-1}}^{1}\ext{\alpha_{u_{n}}}(p_{u_{n}},x,s)\diff{s}=:\ext{\gamma}(\bar{p},x),
\end{align*}
where $\bar{p}:=\left(p_{u_{1}},\dots,p_{u_{n}}\right)\in\mathcal{U}_{u_{1}}\times\dots\times\mathcal{U}_{u_{n}}\subseteq\FR^{{\sf p}_{u_{1}}+\dots+{\sf p}_{u_{n}}}$.
From the classical version of the differentiation under the integral
sign, we have that $\gamma\in\mathcal{C}^{\infty}(U\times A,\R)$,
where $U:=U_{u_{1}}\times\dots\times U_{u_{n}}$, and hence we get
the conclusion.\end{proof}
\begin{cor}
\label{cor:smoothnessOfIntegralOperators}Let $J$ be a non-infinitesimal
interval of $\FR$ . Then the function
\[
(f,u,v)\in\FDiff(J,\FR)\times J^{2}\mapsto\int_{u}^{v}f\in\FR
\]
is smooth.
\end{cor}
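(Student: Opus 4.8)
The plan is to reduce the general operator $I_J\colon(f,u,v)\mapsto\int_u^v f$ to the already-established operator $I\colon\FR^{[0,1]}\to\FR$ of Lem.~\ref{lem:smoothIntegralOperatorOn_0_1} by an affine change of variables, exactly as was done for the compact interval $[a,b]$ in the discussion preceding that lemma. The key observation is that this reduction is insensitive to whether the endpoints of $J$ are finite or infinite, since $u,v\in J$ are always (finite) points of $\FR$ and the straight segment joining them lies in $J$ by convexity of intervals; the infinite endpoints never enter the integration over the fixed interval $[0,1]$.

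First I would set, for $(f,u,v)\in\FDiff(J,\FR)\times J^2$,
\[
\psi(f,u,v)\colon y\in[0,1]\mapsto f\bigl[y\cdot(v-u)+u\bigr]\cdot(v-u)\in\FR,
\]
which makes sense because $y(v-u)+u\in J$ for $y\in[0,1]$ (here I use that an interval of $\FR$ is convex, so $f$ may be evaluated along the affine segment from $u$ to $v$). Applying Thm.~\ref{thm:integrationByChangeOfVariable} to the smooth reparametrization $\phi(y):=y(v-u)+u$, which satisfies $\phi(0)=u$, $\phi(1)=v$ and $\phi'(y)=v-u$, yields
\[
\int_u^v f=\int_0^1 f[\phi(y)]\cdot\phi'(y)\diff{y}=\int_0^1\psi(f,u,v)=I\bigl[\psi(f,u,v)\bigr].
\]
Thus $I_J=I\circ\psi$, and it suffices to show that $\psi\colon\FDiff(J,\FR)\times J^2\to\FR^{[0,1]}$ is an arrow of $\FDiff$; then $I_J$ is smooth as a composite of smooth maps.

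The heart of the argument is the smoothness of $\psi$, and this is where the Cartesian closedness of $\FDiff$ does the work. Since $\FR^{[0,1]}=\FDiff([0,1],\FR)$ is an exponential object, a map into it is smooth if and only if its transpose
\[
\psi^\vee\colon(f,u,v,y)\in\FDiff(J,\FR)\times J^2\times[0,1]\mapsto f\bigl[y(v-u)+u\bigr]\cdot(v-u)\in\FR
\]
is smooth. But $\psi^\vee$ factors through arrows of $\FDiff$: the evaluation $\mathrm{ev}\colon\FDiff(J,\FR)\times J\to\FR$ is smooth by Cartesian closedness; the affine map $(u,v,y)\mapsto y(v-u)+u$ is smooth because the ring operations of $\FR$ are smooth (extensions of ordinary smooth operations) and it takes values in $J$; and multiplication $\FR\times\FR\to\FR$ is smooth. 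Hence $\psi^\vee(f,u,v,y)=\mathrm{ev}\bigl(f,\,y(v-u)+u\bigr)\cdot(v-u)$ is a product of composites of smooth maps, so it is smooth, and therefore so is $\psi$.

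I expect the main (and essentially only) obstacle to be the bookkeeping around the exponential adjunction: verifying that $\psi$ is well-defined, i.e.\ that each $\psi(f,u,v)$ is genuinely an arrow $[0,1]\to\FR$ (which follows from the smoothness of $\psi^\vee$), and that the transpose computed via Cartesian closedness is precisely the claimed operator. Everything else---the change of variables and the reduction to $[0,1]$---is immediate once Lem.~\ref{lem:smoothIntegralOperatorOn_0_1} is in hand, and in particular no new analysis is required to accommodate the infinite endpoints of $J$.
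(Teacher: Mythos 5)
Your proposal is correct and follows essentially the same route as the paper: the paper reduces the operator to the case of Lem.~\ref{lem:smoothIntegralOperatorOn_0_1} via exactly the same substitution map $\psi(f,u,v)(y)=f[y(v-u)+u]\cdot(v-u)$ and the identity $\int_u^v f=I[\psi(f,u,v)]$ from Thm.~\ref{thm:integrationByChangeOfVariable}, dismissing the smoothness of $\psi$ as easy. Your only addition is to spell out that smoothness check via Cartesian closedness (transposing to $\psi^\vee$ and writing it as a composite of evaluation, ring operations, and the inclusion into $J$), together with the correct observation that finiteness of the endpoints of $J$ is irrelevant since only the segment from $u$ to $v$ enters.
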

We can now prove the differentiation under the integral sign:
\begin{lem}
\label{lem:derivationUnderIntegralSign}Let $J$ be a non-infinitesimal
interval of $\FR$, and let $f:[0,1]\times J\ra\FR$ be a smooth function.
Then
\[
\frac{\diff{}}{\diff{x}}\left(\int_{0}^{1}f(s,x)\diff{s}\right)=\int_{0}^{1}\frac{\partial f}{\partial x}(s,x)\diff{s}\quad\forall x\in J.
\]
\end{lem}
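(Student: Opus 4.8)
The plan is to read the claimed identity as an equation between the derivative of the smooth function $F(x):=\int_0^1 f(s,x)\diff{s}$ and a second integral, and to prove it by exhibiting that second integral as the incremental ratio of $F$, then invoking the uniqueness built into the Fermat--Reyes Thm.~\ref{thm:Fermat-Reyes-h-small}. The smoothness of $F$ on $J$ is already in hand: it is the smoothness of the integrand map~\eqref{eq:integrandSmooth}, which follows from Lem.~\ref{lem:smoothIntegralOperatorOn_0_1} (equivalently Cor.~\ref{cor:smoothnessOfIntegralOperators}) together with Cartesian closedness. Thus $F$ has a well-defined smooth incremental ratio $r$ with $F(x+h)=F(x)+h\cdot r(x,h)$ and $F'(x)=r(x,0)$, and the entire task is to compute $r(x,0)$.

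First I would apply the Fermat--Reyes theorem to $f$ in its second variable, carrying $s\in[0,1]$ as an extra parameter. Writing $f$ locally as $f(s,x)=\ext{\alpha}(p,s,x)$ and using the integral expression for the incremental ratio established in the proof of Thm.~\ref{thm:Fermat-Reyes-h-small}, one obtains a \emph{jointly} smooth incremental ratio, which I denote $\tfrac{\partial f}{\partial x}[s,x,h]$, defined on $[0,1]$ times a thickening of $J$, such that
\[
f(s,x+h)=f(s,x)+h\cdot\tfrac{\partial f}{\partial x}[s,x,h],\qquad \tfrac{\partial f}{\partial x}[s,x,0]=\tfrac{\partial f}{\partial x}(s,x).
\]
At the endpoints of $J$ one uses the one-sided version from Lem.~\ref{lem:FermatReyesClosedInterval}, and the local representatives glue by the sheaf property.

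Next I would set $\tilde r(x,h):=\int_0^1 \tfrac{\partial f}{\partial x}[s,x,h]\diff{s}$. By Cartesian closedness the assignment $(x,h)\mapsto\tfrac{\partial f}{\partial x}[-,x,h]\in\FR^{[0,1]}$ is smooth, so composing with the smooth integral operator of Lem.~\ref{lem:smoothIntegralOperatorOn_0_1} shows $\tilde r$ is smooth. Then, using additivity~\ref{enu:additivity} and homogeneity~\ref{enu:homog} of the integral from Thm.~\ref{thm:classicalFormulasOfIntegralCalculus} (the scalar $h\in\FR$ is constant in $s$),
\begin{align*}
F(x+h)-F(x) &=\int_0^1\bigl[f(s,x+h)-f(s,x)\bigr]\diff{s}\\
&=\int_0^1 h\cdot\tfrac{\partial f}{\partial x}[s,x,h]\diff{s}=h\cdot\tilde r(x,h).
\end{align*}
Hence $\tilde r$ is a smooth incremental ratio of $F$, so by the uniqueness clause of Thm.~\ref{thm:Fermat-Reyes-h-small} it coincides with $r$. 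Setting $h=0$ gives $F'(x)=r(x,0)=\tilde r(x,0)=\int_0^1\tfrac{\partial f}{\partial x}(s,x)\diff{s}$, which is the claim.

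The hard part is the second step: securing the \emph{joint} smoothness in $(s,x,h)$ of the partial incremental ratio together with the identity $\tfrac{\partial f}{\partial x}[s,x,0]=\tfrac{\partial f}{\partial x}(s,x)$. This is really a parametrized version of Thm.~\ref{thm:Fermat-Reyes-h-small} rather than a bare application of it: I expect it to follow cleanly from the local representation $f(s,x)=\ext{\alpha}(p,s,x)$ and the explicit integral formula for the incremental ratio, but one must check that the thickenings can be chosen uniformly enough over the (compact) real parameter range of $s$ for the local pieces to assemble, via the sheaf property, into a single smooth function on $[0,1]$ times one thickening of $J$. This uniformity bookkeeping, rather than any conceptual difficulty, is where the care is needed; everything else --- additivity, homogeneity, smoothness of the integral operator, and uniqueness of the incremental ratio --- is already available from the preceding results.
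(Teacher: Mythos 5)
Your proposal is correct and follows essentially the same route as the paper's own proof: establish smoothness of $F(x):=\int_0^1 f(s,x)\diff{s}$ via Lem.~\ref{lem:smoothIntegralOperatorOn_0_1} and Cartesian closedness, introduce the incremental ratio $\partial_2 f[s,x;h]$ of $f$ with respect to its second variable, pull $h$ out of the integral by linearity, and conclude by the uniqueness of the incremental ratio of $F$. If anything, you are more explicit than the paper about the two points it leaves implicit, namely the joint smoothness in $(s,x,h)$ of the partial incremental ratio and the smoothness of the candidate ratio $\tilde r(x,h)=\int_0^1\partial_2 f[s,x;h]\diff{s}$, both of which are indeed needed before the uniqueness clause of Thm.~\ref{thm:Fermat-Reyes-h-small} can be invoked.
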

\begin{proof}
\noindent From Cartesian closedness, it follows that the map $x\in J\mapsto f(-,x)\in\FR^{[0,1]}$
is smooth. Together with Lem.\ \ref{lem:smoothIntegralOperatorOn_0_1},
we know that 
\[
x\in J\mapsto\int_{0}^{1}f(s,x)\diff{s}\in\FR
\]
is smooth. Let $\partial_{2}f[-,-;-]$ be the incremental ratio of
$f$ with respect to its second variable. Then for each $x\in\text{int(}J)$
and for $h\in\FR$ sufficiently small, we have
\begin{align*}
\int_{0}^{1}f(s,x+h)\diff{s} & =\int_{0}^{1}\left\{ f(s,x)+h\cdot\partial_{2}f[s,x;h]\right\} \diff{s}\\
 & =\int_{0}^{1}f(s,x)\diff{s}+h\cdot\int_{0}^{1}\partial_{2}f[s,x;h]\diff{s}.
\end{align*}
Therefore, the uniqueness of the incremental ratio $R[-,-]$ of the
function $\int_{0}^{1}f(s,-)\diff{s}$ yields that
\[
R[x,h]=\int_{0}^{1}\partial_{2}f[s,x;h]\diff{s}\quad\forall^{0}h.
\]
Hence,
\begin{align*}
\frac{\diff{}}{\diff{x}}\left(\int_{0}^{1}f(s,x)\diff{s}\right) & =R[x,0]\\
 & =\int_{0}^{1}\partial_{2}f[s,x;0]\diff{s}\\
 & =\int_{0}^{1}\frac{\partial f}{\partial x}(s,x)\diff{s},
\end{align*}
as expected.
\end{proof}
The following lemma is needed for the ``global'' proof of the additivity
of integral with respect to the integration intervals:
\begin{lem}
\label{lem:differentiationIntegral-x-Under}Let $J$ be a non-infinitesimal
interval of $\FR$, let $f:J\ra\FR$ be a smooth function, and let
$v\in J$. Then
\[
\frac{\diff{}}{\diff{x}}\left(\int_{x}^{v}f\right)=-f(x).
\]
\end{lem}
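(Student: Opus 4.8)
The plan is to reduce the computation to a \emph{fixed} integration interval by the substitution already derived from Thm.~\ref{thm:integrationByChangeOfVariable}, and then to differentiate under the integral sign. Writing $G(x):=\int_{x}^{v}f$ and using the change of variable $t=x+s(v-x)$, I would first record
\[
G(x)=\int_{0}^{1}f\bigl(x+s(v-x)\bigr)\cdot(v-x)\diff{s}\qquad\forall x\in J.
\]
Here the integrand is a genuine smooth function of the pair $(s,x)\in[0,1]\times J$: since $J$ is an interval it is convex (for $s\in[0,1]$ and $x,v\in J$, the combination $x+s(v-x)=(1-s)x+sv$ lies between $x$ and $v$, hence in $J$), and $f$ is defined on all of $J$, so the composition and the factor $(v-x)$ are smooth on the full product $[0,1]\times J$.

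Next I would apply the differentiation under the integral sign (Lem.~\ref{lem:derivationUnderIntegralSign}) to the smooth integrand $g(s,x):=(v-x)\cdot f\bigl(x+s(v-x)\bigr)$. By the product rule together with the chain rule (\citep[Thm.~29]{Gio10e}) one computes
\[
\frac{\partial g}{\partial x}(s,x)=(1-s)(v-x)\cdot f'\bigl(x+s(v-x)\bigr)-f\bigl(x+s(v-x)\bigr),
\]
and therefore
\[
G'(x)=\int_{0}^{1}(1-s)(v-x)\cdot f'\bigl(x+s(v-x)\bigr)\diff{s}-\int_{0}^{1}f\bigl(x+s(v-x)\bigr)\diff{s}.
\]

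The key simplification is to observe, again by the chain rule, that $(v-x)\cdot f'\bigl(x+s(v-x)\bigr)=\frac{\partial}{\partial s}\bigl\{f\bigl(x+s(v-x)\bigr)\bigr\}$, so the first integral has the form $\int_{0}^{1}(1-s)\cdot\frac{\partial}{\partial s}\bigl\{f(x+s(v-x))\bigr\}\diff{s}$. Integrating by parts in $s$ (Thm.~\ref{thm:classicalFormulasOfIntegralCalculus}~\ref{enu:intByParts}), the boundary term $\bigl[(1-s)f(x+s(v-x))\bigr]_{0}^{1}$ equals $-f(x)$ (the $s=1$ contribution vanishes), while the leftover integral is exactly $\int_{0}^{1}f\bigl(x+s(v-x)\bigr)\diff{s}$, which cancels the second integral. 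This leaves $G'(x)=-f(x)$, as required, and the identity holds for every $x\in J$.

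The main point of care, and the reason I route the argument through integration by parts rather than through the fundamental theorem in the naive form, is that $v\simeq x$ is allowed, so $v-x$ need not be invertible; every manipulation above is purely multiplicative and never divides by $v-x$. The only other thing to check carefully is that Lem.~\ref{lem:derivationUnderIntegralSign} genuinely applies, i.e.\ that $g$ is smooth on all of $[0,1]\times J$ \emph{including} the non-interior points of $J$, which is precisely what the convexity of $J$ and the global definedness of $f$ on $J$ secure.
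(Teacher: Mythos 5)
Your proposal is correct and follows essentially the same route as the paper: substitution to the fixed interval $[0,1]$ (Thm.~\ref{thm:integrationByChangeOfVariable}), differentiation under the integral sign (Lem.~\ref{lem:derivationUnderIntegralSign}) combined with the product and chain rules, and then integration by parts. The only difference is cosmetic and slightly streamlines the finish: you integrate by parts directly against the weight $(1-s)$, so the boundary term is immediately $-f(x)$ and the remaining integrals cancel, whereas the paper splits $(1-s)$ into $1$ and $s$, converts the first piece back to $\int_{x}^{v}f'$ by substitution, and invokes Thm.~\ref{thm:classicalFormulasOfIntegralCalculus}\ref{enu:foundamental} before reaching the same cancellation.
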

\begin{proof}
Set $\phi(s):=x+(v-x)s$ for each $s\in[0,1]$. From Thm.~\ref{thm:integrationByChangeOfVariable}
(integration by substitution), we get
\[
\int_{x}^{v}f(t)\diff{t}=\int_{0}^{1}f\left[x+(v-x)s\right]\cdot(v-x)\diff{s}.
\]
From this, using Lem.\ \ref{lem:derivationUnderIntegralSign} (differentiation
under the integral sign), the product rule (\citep[Thm. 28]{Gio10e})
and the chain rule (\citep[Thm. 29]{Gio10e}), we get
\begin{align}
\frac{\diff{}}{\diff{x}}\left(\int_{x}^{v}f\right) & =-\int_{0}^{1}f\left[x+(v-x)s\right]\diff{s}+\nonumber \\
 & \phantom{=}+(v-x)\int_{0}^{1}f'\left[x+(v-x)s\right]\cdot(1-s)\diff{s}\nonumber \\
 & =-\int_{0}^{1}f\left[x+(v-x)s\right]\diff{s}+\nonumber \\
 & \phantom{=}+\int_{x}^{v}f'(t)\diff{t}-\int_{0}^{1}s\cdot f'\left[\phi(s)\right]\cdot\phi'(s)\diff{s}.\label{eq:1-derivInt-x-under}
\end{align}
Integration by parts (Thm.~\ref{thm:classicalFormulasOfIntegralCalculus}\ref{enu:intByParts})
applied to the last integral yields
\begin{align*}
\int_{0}^{1}s\cdot f'\left[\phi(s)\right]\cdot\phi'(s)\diff{s} & =\left[(f\circ\phi)(s)\cdot s\right]_{0}^{1}-\int_{0}^{1}(f\circ\phi)(s)\diff{s}\\
 & =f(v)-\int_{0}^{1}f\left[x+(v-x)s\right]\diff{s}.
\end{align*}
Substituting in \eqref{eq:1-derivInt-x-under} and using Thm.~\ref{thm:classicalFormulasOfIntegralCalculus}\ref{enu:foundamental},
we obtain
\begin{align*}
\frac{\diff{}}{\diff{x}}\left(\int_{x}^{v}f\right) & =-\int_{0}^{1}f\left[x+(v-x)s\right]\diff{s}+f(v)-f(x)-f(v)+\\
 & \phantom{=}+\int_{0}^{1}f\left[x+(v-x)s\right]\diff{s}\\
 & =-f(x),
\end{align*}
as desired.
\end{proof}
Now we are led to the ``global'' proof of the additivity of integral
with respect to the integration intervals:
\begin{cor}
\label{cor:integralAdditiveDomain}Let $J$ be a non-infinitesimal
interval of $\FR$, let $f:J\ra\FR$ be a smooth function, and let
$u,$ $v$, $w\in J$. Then we have:
\[
\int_{u}^{v}f+\int_{v}^{w}f=\int_{u}^{w}f.
\]
\end{cor}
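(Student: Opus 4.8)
The plan is to treat the left-hand side as a function of the middle point $v$, holding the outer endpoints $u$ and $w$ fixed, and to show that this function does not actually depend on $v$. Concretely, I would set
\[
L(v):=\int_{u}^{v}f+\int_{v}^{w}f\qquad\forall v\in J,
\]
and prove that $L$ is constant on $J$ by verifying $L'(v)=0$ for every $v\in\text{int}(J)$. Both summands are smooth in $v$: the first by Def.~\ref{def:integral}(i), and the second by Lem.~\ref{lem:differentiationIntegral-x-Under}, whose proof (via integration by substitution and differentiation under the integral sign) already establishes that $x\mapsto\int_{x}^{w}f$ is smooth. Hence $L$ is a sum of smooth functions, and its incremental ratio is additive, so the derivative splits accordingly.

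The decisive computation is then a one-line cancellation. By Def.~\ref{def:integral}(iii), the derivative of the first term with respect to its upper limit is
\[
\frac{\diff{}}{\diff{v}}\int_{u}^{v}f=f(v),
\]
while Lem.~\ref{lem:differentiationIntegral-x-Under}, applied with $w$ as the fixed upper endpoint and $v$ as the variable lower limit, gives
\[
\frac{\diff{}}{\diff{v}}\int_{v}^{w}f=-f(v).
\]
Adding these yields $L'(v)=f(v)-f(v)=0$ for all $v\in\text{int}(J)$, which is exactly what is needed to trigger the constancy argument.

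To finish, I would invoke the constant function Thm.~\ref{thm:constancyPrinciple}: since $J$ is non-infinitesimal (so $\text{int}(J)\neq\emptyset$) and the smooth function $L$ has vanishing derivative throughout $\text{int}(J)$, it is constant on all of $J$. Evaluating at $v=u$ and using $\int_{u}^{u}f=0$ from Def.~\ref{def:integral}(ii), we obtain $L(u)=\int_{u}^{w}f$, and therefore $L(v)=\int_{u}^{w}f$ for every $v\in J$, which is the asserted identity. Note that no hypothesis on the relative order of $u$, $v$, $w$ is required: $v$ ranges freely over $J$, and the constant function theorem delivers constancy on the whole interval at once.

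The main obstacle is not the final constancy step but the preceding formula $\frac{\diff{}}{\diff{v}}\int_{v}^{w}f=-f(v)$ together with the smoothness of $v\mapsto\int_{v}^{w}f$, i.e.~differentiating the \emph{lower} limit of an integral. This is precisely the content of Lem.~\ref{lem:differentiationIntegral-x-Under}, which itself rests on the change-of-variables formula (Thm.~\ref{thm:integrationByChangeOfVariable}) and on differentiation under the integral sign (Lem.~\ref{lem:derivationUnderIntegralSign}). Once that lemma is available, this ``global'' proof reduces to the derivative cancellation above followed by the constant function theorem, and one never needs to unwind the local expressions of $f$ as in the alternative argument of Rem.~\ref{rem:integral-local}.
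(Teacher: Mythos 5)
Your proof is correct and is essentially the paper's own argument: the paper likewise sets $F(x):=\int_{u}^{x}f+\int_{x}^{w}f$, obtains $F'(x)=f(x)-f(x)=0$ from Lem.~\ref{lem:differentiationIntegral-x-Under}, and concludes by constancy, evaluating at $x=u$ where $\int_{u}^{u}f=0$. The only cosmetic difference is that the paper cites Lem.~\ref{lem:intervalsConnected} for the constancy step, whereas you invoke the constant function Thm.~\ref{thm:constancyPrinciple} (which rests on that lemma) -- arguably the more precise reference.
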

\begin{proof}
Set $F(x):=\int_{u}^{x}f+\int_{x}^{w}f$ for all $x\in J$. Then Lem.\ \ref{lem:differentiationIntegral-x-Under}
yields that $F'(x)=f(x)-f(x)=0$ at each $x\in J$. Hence by Lem.~\ref{lem:intervalsConnected},
$F$ is constant and $F(v)=F(u)$, which is our conclusion since $\int_{u}^{u}f=0$.
\end{proof}
Using Cor.~\ref{cor:integralAdditiveDomain}, we can calculate the
incremental ratio of the integral function $\int_{u}^{(-)}f$ and
get a form of mean value theorem called Hadamard's lemma as follows:
\begin{cor}
\label{cor:incrementalRatioIntegralFunction}Let $J$ be a non-infinitesimal
interval of $\FR$, let $f:J\ra\FR$ be a smooth function, and let
$u\in J$. Then
\begin{enumerate}[leftmargin=*,label=(\roman*),align=left ]
\item \label{enu:int_x_x+h}for each $x\in J$ and $h\in\FR$, if $x+h\in J$,
then $\int_{x}^{x+h}f(s)\diff{s}=h\cdot\int_{0}^{1}f(x+hs)\diff{s}$.
Therefore, the incremental ratio of the smooth function $\int_{u}^{(-)}f$
is
\[
\left(\int_{u}^{(-)}f\right)'[x,h]=\int_{0}^{1}f(x+hs)\diff{s}\quad\forall x\in\text{\emph{int}}(J)\,\forall^{0}h\in\FR;
\]

\item \label{enu:Hadamard}$\forall x\in\text{\emph{int}}(J)\,\forall^{0}h\in\FR:\ f(x+h)-f(x)=h\cdot\int_{0}^{1}f'(x+hs)\diff{s}$. 
\end{enumerate}
\end{cor}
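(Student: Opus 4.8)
The plan is to establish \ref{enu:int_x_x+h} by a single change of variables, read off the incremental ratio via the uniqueness built into the Fermat--Reyes theorem, and then deduce \ref{enu:Hadamard} by applying \ref{enu:int_x_x+h} to $f'$ together with the fundamental theorem. First I would prove the identity $\int_x^{x+h}f(s)\diff{s}=h\cdot\int_0^1 f(x+hs)\diff{s}$. Set $\phi(s):=x+hs$ for $s\in[0,1]$; this is smooth, and since $x,x+h\in J$ and each $x+hs$ with $s\in[0,1]_\R$ is the convex combination $(1-s)x+s(x+h)$, the total order on $\FR$ guarantees $x+hs\in J$, so $\phi:[0,1]\ra J$ is a genuine morphism of $\FDiff$. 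Integration by substitution (Thm~\ref{thm:integrationByChangeOfVariable}) with endpoints $0,1$ gives $\int_{\phi(0)}^{\phi(1)}f(t)\diff{t}=\int_0^1 f[\phi(s)]\cdot\phi'(s)\diff{s}$, and since $\phi(0)=x$, $\phi(1)=x+h$ and $\phi'(s)=h$, this is exactly the claimed identity. Note that it holds even for infinitesimal $h$, because Def.~\ref{def:integral} permits integration over infinitesimal intervals.

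Next, to obtain the incremental ratio, I would write $F:=\int_u^{(-)}f$ and use additivity with respect to the integration interval (Cor.~\ref{cor:integralAdditiveDomain}) to get $F(x+h)-F(x)=\int_x^{x+h}f=h\cdot\int_0^1 f(x+hs)\diff{s}$, valid for $x\in\text{int}(J)$ and $h$ sufficiently small. It then remains to recognize the right-hand coefficient as the incremental ratio, for which I must show that $(x,h)\mapsto\int_0^1 f(x+hs)\diff{s}$ is smooth: the assignment $(x,h)\mapsto[s\mapsto f(x+hs)]$ is a morphism $\text{int}(J)\times\FR\ra\FR^{[0,1]}$ by Cartesian closedness of $\FDiff$, and post-composing with the smooth operator $I$ of Lem.~\ref{lem:smoothIntegralOperatorOn_0_1} (equivalently Cor.~\ref{cor:smoothnessOfIntegralOperators}) yields smoothness. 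The equation $F(x+h)=F(x)+h\cdot\int_0^1 f(x+hs)\diff{s}$ then exhibits a smooth map in the role of the incremental ratio, so the uniqueness clause of the Fermat--Reyes Thm.~\ref{thm:Fermat-Reyes-h-small} forces $\left(\int_u^{(-)}f\right)'[x,h]=\int_0^1 f(x+hs)\diff{s}$.

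Finally, for \ref{enu:Hadamard} I would apply \ref{enu:int_x_x+h} to the smooth function $f'$ (available on $\text{int}(J)$ by Thm.~\ref{thm:Fermat-Reyes-h-small} and on all of $J$ by Lem.~\ref{lem:FermatReyesClosedInterval}), obtaining $\int_x^{x+h}f'(s)\diff{s}=h\cdot\int_0^1 f'(x+hs)\diff{s}$ for $x\in\text{int}(J)$ and $h$ small enough that the relevant points remain interior. On the other hand, the fundamental theorem of calculus (Thm.~\ref{thm:classicalFormulasOfIntegralCalculus}\ref{enu:foundamental}) gives $\int_x^{x+h}f'(s)\diff{s}=f(x+h)-f(x)$. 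Combining the two equalities yields the Hadamard identity.

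The main obstacle I anticipate is the smoothness verification in the second step --- confirming that $(x,h)\mapsto\int_0^1 f(x+hs)\diff{s}$ is an arrow of $\FDiff$ --- since it is precisely this smoothness that licenses the appeal to uniqueness in the Fermat--Reyes theorem; everything else reduces to assembling results already proved. A minor but essential point is checking that the substitution map $\phi$ really lands in $J$, which is where the total order on $\FR$ enters.
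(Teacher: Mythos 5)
Your proposal is correct and follows essentially the same route as the paper's own proof: integration by substitution (Thm.~\ref{thm:integrationByChangeOfVariable}) for the identity in \ref{enu:int_x_x+h}, then additivity (Cor.~\ref{cor:integralAdditiveDomain}) together with the uniqueness in the Fermat--Reyes Thm.~\ref{thm:Fermat-Reyes-h-small} to identify the incremental ratio, and finally \ref{enu:int_x_x+h} applied to $f'$ plus Thm.~\ref{thm:classicalFormulasOfIntegralCalculus}\ref{enu:foundamental} for \ref{enu:Hadamard}. The only difference is that you spell out details the paper leaves implicit --- the order-convexity check that $\phi$ lands in $J$ and the $\FDiff$-smoothness of $(x,h)\mapsto\int_0^1 f(x+hs)\diff{s}$ via Cartesian closedness and Lem.~\ref{lem:smoothIntegralOperatorOn_0_1} --- which is a sound and welcome elaboration, not a deviation.
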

\begin{proof}
\ref{enu:int_x_x+h}: The first statement follows from Thm.~\ref{thm:integrationByChangeOfVariable},
and the second statement then follows from Cor.\ \ref{cor:integralAdditiveDomain}
and Thm.\ \ref{thm:Fermat-Reyes-h-small}.

\noindent \ref{enu:Hadamard}: It follows by applying \ref{enu:int_x_x+h}
to the smooth function $f':\text{int}(J)\ra\FR$ together with Thm.~\ref{thm:classicalFormulasOfIntegralCalculus}\ref{enu:foundamental}.
\end{proof}

\subsection{Standard and infinitesimal parts of an integral}

To find a useful formula for the standard and the infinitesimal parts
of an integral, we introduce the following:
\begin{defn}
\label{def:stdInfParts}Let $S\subseteq\FR^{n}$ and let $f:S\ra\FR^{d}$
be a smooth function. Then
\begin{enumerate}[leftmargin=*,label=(\roman*),align=left ]
\item $\st{S}:=\left\{ \st{x}\in\R^{n}\mid x\in S\right\} $.
\item If $\st{S}\subseteq S$, then we set $\st{\!f}:r\in\st{S}\mapsto\st{\left[f(r)\right]}\in\R^{d}$,
which is called the \emph{standard part} of $f$.
\item Assume that $\st{S}$ is an open subset of $\R^{n}$, and $\ext{\left(\st{S}\right)}=S$.
Then we set $\sh{\!f}:=\ext{\left(\st{\!f}\right)}$ and $\delta f:=f-\sh{\!f}$,
which are called, respectively, the \emph{shadow} and the \emph{infinitesimal}
part of the smooth function $f$. Analogously, we set $\delta x:=x-\st{x}$
for any $x\in\FR$. Note that $\sh{\!f}(r)=\st{\!f}(r)$ for each
$r\in\st{S}$.
\end{enumerate}
\end{defn}
\noindent For example, if $S:=\text{int}(a,b)$, then $\st{S}=(\st{a},\st{b})_{\R}\subseteq S$,
$\st{S}$ is an open subset of $\R$, and $\ext{\left(\st{S}\right)}=S$.
Note that the standard part $\st{\!f}$ is a function defined and
with values in standard points, whereas the shadow $\sh{\!f}$ can
have nonstandard values for nonstandard points in the domain. Therefore,
$(\delta f)(x)$ is not necessarily equal to $\delta(f(x))$, unless
$x\in\st{S}\subseteq S$; see \ref{enu:stdFunctionAtStdNum} of the
following lemma. The correctness of the definition of shadow is proved
in \ref{enu:shadowAndInfPartAreSmooth} of the following lemma:
\begin{lem}
\label{lem:stdPartAndShadowOfFunction}Let $S\subseteq\FR^{n}$. Assume
that $\st{S}\subseteq S$, and let $f:S\ra\FR^{d}$ be a smooth function.
Then
\begin{enumerate}[leftmargin=*,label=(\roman*),align=left ]
\item \label{enu:stdFunctionAtStdNum}$\left(\st{\!f}\right)(\st{x})=\st{\left[f(x)\right]}\quad\forall x\in S$.
\item \label{enu:stdFcnIsSmooth}If $\st{S}$ is an open subset of $\R^{n}$,
then $\st{\!f}\in\Coo(\st{S},\R^{d})$.
\item \label{enu:shadowAndInfPartAreSmooth}If $\st{S}$ is an open subset
of $\R^{n}$ and $\ext{\left(\st{S}\right)}=S$, then $\sh{\!f}\in\FDiff(S,\FR^{d})$
and $\delta f\in\FDiff(S,D_{\infty}^{d})$.
\item \label{enu:stdAndInfPartsIntegral}Under the assumptions of \ref{enu:shadowAndInfPartAreSmooth},
if we further assume that $n=1$, $S\supseteq\text{\emph{int}}(a,b)$
and $u$, $v\in\text{\emph{int}}(a,b)$, then the standard part of
the integral $\int_{u}^{v}f$ is given by
\[
\st{\!\left(\int_{u}^{v}f\right)}=\Rint_{\st{u}}^{\st{v}}\st{\!f}=\int_{\st{u}}^{\st{v}}\sh{\!f},
\]

and the infinitesimal part is given by
\begin{align*}
\delta\left(\int_{u}^{v}f\right) & =\int_{u}^{\st{u}}\sh{\!f}+\int_{\st{v}}^{v}\sh{\!f}+\int_{u}^{v}\delta f\\
 & =\sum_{i=1}^{[\omega(v)]}\frac{\sh{\!f}^{(i-1)}(\st{v})}{i!}\left(\delta v\right)^{i}-\sum_{j=1}^{[\omega(u)]}\frac{\sh{\!f}^{(j-1)}(\st{u})}{j!}\left(\delta u\right)^{j}+\int_{u}^{v}\delta f\\
 & \in D_{\infty},
\end{align*}
 where $[\omega(u)]$ denotes the integer part of $\omega(u)$.

\end{enumerate}

As trivial consequences of \ref{enu:stdAndInfPartsIntegral}, any
integral over an infinitesimal integration interval is infinitesimal,
and any integral of an infinitesimal-valued smooth function is infinitesimal.

\end{lem}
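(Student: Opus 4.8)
The plan is to treat the four claims in order, reducing each to a local expression $f(x)=\ext{\alpha}(p,x)$ valid on $\ext{V}\cap S$ and to the behaviour of the standard part under $\ext{(-)}$. For (i), I would choose the local chart so that, by $\st{S}\subseteq S$, both $x$ and $\st{x}$ lie in $\ext{V}\cap S$; writing $x=\st{x}+\delta x$ with $\delta x$ infinitesimal and applying Taylor's formula \eqref{eq:TaylorStd} to $\ext{\alpha}(p,\st{x}+\delta x)$ shows its standard part is the constant term $\alpha(\st{p},\st{x})$, and the same computation at the standard point $\st{x}$ gives $(\st{\!f})(\st{x})=\alpha(\st{p},\st{x})$ as well, so $\st{[f(x)]}=(\st{\!f})(\st{x})$. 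For (ii), the same chart gives $(\st{\!f})(r)=\alpha(\st{p},r)$ for standard $r\in\st{S}$, an ordinary smooth function of $r$; since smoothness is local, $\st{\!f}\in\Coo(\st{S},\R^{d})$. For (iii), as $\st{\!f}$ is ordinary smooth and $\ext{(\st{S})}=S$, its extension $\sh{\!f}=\ext{(\st{\!f})}$ is a smooth map $S\ra\FR^{d}$, and $\delta f=f-\sh{\!f}$ is smooth; using (i) and the fact that the standard part of $\ext{(\st{\!f})}(x)$ is $(\st{\!f})(\st{x})$, I get $\st{[\delta f(x)]}=\st{[f(x)]}-(\st{\!f})(\st{x})=0$, so $\delta f$ is $D_{\infty}^{d}$-valued.

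The substance is (iv); I may assume $u\le v$, the other case following by antisymmetry of the integral (Cor.~\ref{cor:integralAdditiveDomain}). First I would split $f=\sh{\!f}+\delta f$ via (iii) and use additivity of the integral (Thm.~\ref{thm:classicalFormulasOfIntegralCalculus}) to write $\int_{u}^{v}f=\int_{u}^{v}\sh{\!f}+\int_{u}^{v}\delta f$. Let $G$ be an ordinary primitive of $g:=\st{\!f}$ on a real interval containing $\st{u}$ and $\st{v}$; since $(\ext{G})'=\ext{(G')}=\ext{g}=\sh{\!f}$, the fundamental theorem of calculus (Thm.~\ref{thm:classicalFormulasOfIntegralCalculus}) gives $\int_{u}^{v}\sh{\!f}=\ext{G}(v)-\ext{G}(u)$, while over standard endpoints \eqref{eq:RiemannInt} gives $\int_{\st{u}}^{\st{v}}\sh{\!f}=\Rint_{\st{u}}^{\st{v}}g$. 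The key auxiliary fact is $\int_{u}^{v}\delta f\in D_{\infty}$: writing $\delta f$ locally as $\ext{\gamma_{i}}(q_{i},-)$, part (iii) forces $\gamma_{i}(\st{q_{i}},s)=0$ for standard $s$, so in the little-oh representative of $\int_{u}^{v}\delta f$ furnished by Lem.~\ref{lem:IntegralAsfiniteSum} every integrand tends to $0$ as $t\to0^{+}$, whence the standard part of $\int_{u}^{v}\delta f$ vanishes. Taking standard parts of $\int_{u}^{v}f=\ext{G}(v)-\ext{G}(u)+\int_{u}^{v}\delta f$ then yields $\st{\!\left(\int_{u}^{v}f\right)}=G(\st{v})-G(\st{u})=\Rint_{\st{u}}^{\st{v}}\st{\!f}=\int_{\st{u}}^{\st{v}}\sh{\!f}$.

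For the infinitesimal part, $\delta\!\left(\int_{u}^{v}f\right)=\int_{u}^{v}f-\st{\!\left(\int_{u}^{v}f\right)}$ equals $[\ext{G}(v)-G(\st{v})]-[\ext{G}(u)-G(\st{u})]+\int_{u}^{v}\delta f$, and recognising $\ext{G}(\st{u})-\ext{G}(u)=\int_{u}^{\st{u}}\sh{\!f}$ and $\ext{G}(v)-\ext{G}(\st{v})=\int_{\st{v}}^{v}\sh{\!f}$ yields the first displayed form. To reach the explicit sum I would expand $\ext{G}(\st{v}+\delta v)$ by Taylor's formula \eqref{eq:TaylorStd}: since $\omega(\delta v)=\omega(v)$, one has $(\delta v)^{i}\neq0$ exactly for $1\le i\le[\omega(v)]$, so $\ext{G}(v)-G(\st{v})=\sum_{i=1}^{[\omega(v)]}\frac{G^{(i)}(\st{v})}{i!}(\delta v)^{i}$, and since $G^{(i)}(\st{v})=g^{(i-1)}(\st{v})=\sh{\!f}^{(i-1)}(\st{v})$ this is the stated sum, the $u$-term being identical up to sign. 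Finiteness of the sums together with $\int_{u}^{v}\delta f\in D_{\infty}$ gives membership in $D_{\infty}$. The two trivial consequences then specialise: if $\st{u}=\st{v}$ then $\Rint_{\st{u}}^{\st{v}}\st{\!f}=0$, and if $f$ is $D_{\infty}$-valued then $\st{\!f}=0$, so in either case $\st{\!\left(\int_{u}^{v}f\right)}=0$.

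The main obstacle I anticipate is the bookkeeping in (iv): matching the exact number of surviving Taylor terms to $[\omega(v)]$ (which rests on $(\delta v)^{i}=0\iff\omega(v)<i$) and identifying $G^{(i)}$ with $\sh{\!f}^{(i-1)}$, together with proving $\int_{u}^{v}\delta f\in D_{\infty}$ without circularly invoking the standard-part formula being established — which is why I route that step through the explicit little-oh representative of Lem.~\ref{lem:IntegralAsfiniteSum} rather than through the formula for $\st{\!\left(\int_{u}^{v}f\right)}$ itself.
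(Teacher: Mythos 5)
Your proof is correct, and for the substantive part \ref{enu:stdAndInfPartsIntegral} it takes a genuinely different route from the paper; parts \ref{enu:stdFunctionAtStdNum}--\ref{enu:shadowAndInfPartAreSmooth} essentially coincide with the paper's argument (local writing $f=\ext{\alpha}(p,-)$ and the fact that $\st{\left[\ext{\alpha}(p,x)\right]}=\alpha(\st{p},\st{x})$), except that you re-derive that fact via Taylor's formula instead of quoting it. For \ref{enu:stdAndInfPartsIntegral}, the paper applies Lem.~\ref{lem:IntegralAsfiniteSum} to $f$ itself and takes the standard part of the resulting explicit little-oh representative (dominated convergence term by term), obtaining $\Rint_{\st{u}}^{\st{v}}\st{\!f}$ directly; it then gets the infinitesimal part by splitting $\int_{u}^{v}$ into $\int_{u}^{\st{u}}+\int_{\st{u}}^{\st{v}}+\int_{\st{v}}^{v}$ via Cor.~\ref{cor:integralAdditiveDomain} and Taylor-expanding the two boundary integrals. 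You instead split the integrand, $f=\sh{\!f}+\delta f$, evaluate $\int_{u}^{v}\sh{\!f}=\ext{G}(v)-\ext{G}(u)$ ``globally'' by the fundamental theorem (Thm.~\ref{thm:classicalFormulasOfIntegralCalculus}\ref{enu:foundamental}) applied to an ordinary primitive $G$ of $\st{\!f}$, and confine all representative-based work to the single auxiliary fact $\int_{u}^{v}\delta f\in D_{\infty}$, proved from Lem.~\ref{lem:IntegralAsfiniteSum} together with the vanishing of the local coefficients $\gamma_{i}(\st{q_{i}},-)$ at standard points. The two proofs converge in the last step: your Taylor expansion of $\ext{G}$ is exactly the paper's Taylor expansion of the boundary integrals, since $\ext{G}$ is a primitive of $\sh{\!f}$. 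Your route buys modularity and the ``global'' flavour the paper advocates elsewhere (both formulas drop out of one identity, and the second ``trivial consequence'' is in fact established independently, as a lemma, rather than read off at the end); the paper's route buys a single self-contained computation needing only Lem.~\ref{lem:IntegralAsfiniteSum}, Cor.~\ref{cor:integralAdditiveDomain} and Taylor's formula, with no detour through the FTC and a primitive.

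Two minor repairs you should make. First, in \ref{enu:stdFunctionAtStdNum}, Taylor's formula \eqref{eq:TaylorStd} expands around a \emph{standard} base point, while your parameter $p$ is itself nonstandard; expand jointly in $(p,x)$ around $(\st{p},\st{x})$ with increment $(\delta p,\delta x)$ (or simply quote the standard-part property of $\ext{(-)}$, as the paper does). Second, neither $S$ nor $\text{int}(a,b)$ is an interval of $\FR$, so before invoking Def.~\ref{def:integral}, Thm.~\ref{thm:classicalFormulasOfIntegralCalculus}\ref{enu:foundamental} or Lem.~\ref{lem:IntegralAsfiniteSum} you should fix $c$, $d\in\R$ with $\st{a}<c<\min(\st{u},\st{v})$ and $\max(\st{u},\st{v})<d<\st{b}$, and work with the restrictions of $f$, $\sh{\!f}$, $\delta f$ and $\ext{G}$ to the non-infinitesimal interval $(c,d)\subseteq\text{int}(a,b)$; this is a shared imprecision (the paper's own proof performs the same restriction silently), not a gap specific to your argument.
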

\begin{proof}
\ref{enu:stdFunctionAtStdNum}: We can write $f(y)=\ext{\alpha}(p,y)$
for each $y\in\ext{V}\cap S$, where $\st{x}\in V$ and $V$ is an
open subset of $\R^{n}$. Therefore, $\left(\st{\!f}\right)(\st{x})=\st{\left[f(\st{x})\right]}=\st{\left[\ext{\alpha}(p,\st{x})\right]}=\alpha(\st{p},\st{x})$.
On the other side, $\st{\left[f(x)\right]}=\st{\left[\ext{\alpha}(p,x)\right]}=\alpha(\st{p},\st{x})$.

\noindent \ref{enu:stdFcnIsSmooth}: We write again $f|_{\ext{V}\cap S}=\ext{\alpha}(p,-)|_{\ext{V}\cap S}$,
where now $V$ is an open neighborhood of $r\in\st{S}\subseteq S$.
But $V\cap\st{S}\subseteq\ext{V}\cap S$, so $\st{\!f}|_{V\cap\st{\!S}}=\alpha(\st{p},-)|_{V\cap\st{\!S}}$,
$V\cap\st{S}$ is an open neighborhood of $r$ and $\alpha(\st{p},-)|_{V\cap\st{S}}$
is a smooth function. This proves our claim.

\noindent \ref{enu:shadowAndInfPartAreSmooth}: This is clear from
the property of the extension $\ext{(-)}$ of ordinary smooth functions
and from the assumption $\ext{\left(\st{S}\right)}=S$. Observe that,
using the previous notations, $\sh{\!f}(x)=\ext{\left(\st{\!f}\right)}(x)=\ext{\alpha}(\st{p},x)$
for $x\in S$ (with $\alpha$ and $p$ depending on $x$). Hence,
$(\delta f)(x)=f(x)-\sh{\,f}(x)=\ext{\alpha}(p,x)-\ext{\alpha}(\st{p},x)\in D_{\infty}^{d}$.

\noindent \ref{enu:stdAndInfPartsIntegral}: Lem.\ \ref{lem:IntegralAsfiniteSum}
yields
\[
\st{\,\left(\int_{u}^{v}f\right)}=\st{\left[\int_{u}^{\bar{x}_{1}}\ext{\alpha_{1}}(p_{1},s)\diff{s}+\ldots+\int_{\bar{x}_{n-1}}^{v}\ext{\alpha_{n}}(p_{n},s)\diff{s}\right]}.
\]
The inside of the brackets of the right hand side of this equality
is a smooth function of $u$, $\bar{x}_{1},\ldots,\bar{x}_{n-1}$,
$v$, $p_{1},\ldots,p_{n}$. Taking the standard parts (that, as usual,
includes the use of the dominated convergence theorem for $t\to0^{+}$
in $\R$) and using \eqref{eq:RiemannInt}, we obtain that
\begin{align*}
\st{\,\left(\int_{u}^{v}f\right)} & =\int_{\st{u}}^{\bar{x}_{1}}\alpha_{1}(\st{p_{1}},s)\diff{s}+\ldots+\int_{\bar{x}_{n-1}}^{\st{v}}\alpha_{n}(\st{p_{n}},s)\diff{s}\\
 & =\int_{\st{u}}^{\st{v}}\sh{\!f}\\
 & =\Rint_{\st{u}}^{\bar{x}_{1}}\alpha_{1}(\st{p_{1}},s)\diff{s}+\ldots+\Rint_{\bar{x}_{n-1}}^{\st{v}}\alpha_{n}(\st{p_{n}},s)\diff{s}\\
 & =\Rint_{\st{u}}^{\st{v}}\st{\!f}.
\end{align*}
Now, we can compute the infinitesimal part by
\begin{align*}
\delta\left(\int_{u}^{v}f\right) & =\int_{u}^{v}f-\int_{\st{u}}^{\st{v}}\sh{\!f}\\
 & =\int_{u}^{\st{u}}(\sh{\!f}+\delta f)+\int_{\st{u}}^{\st{v}}(\sh{\!f}+\delta f)+\int_{\st{v}}^{v}(\sh{\!f}+\delta f)-\int_{\st{u}}^{\st{v}}\sh{\!f}\\
 & =\int_{\st{u}+\delta u}^{\st{u}}\sh{\!f}+\int_{\st{v}}^{\st{v}+\delta v}\sh{\!f}+\int_{u}^{v}\delta f\\
 & =-\sum_{j=1}^{[\omega(u)]}\frac{\sh{\!f}^{(j-1)}(\st{u})}{j!}\left(\delta u\right)^{j}+\sum_{i=1}^{[\omega(v)]}\frac{\sh{\!f}^{(i-1)}(\st{v})}{i!}\left(\delta v\right)^{i}+\int_{u}^{v}\delta f,
\end{align*}
where, in the last step, we have used the infinitesimal Taylor's formula
\eqref{eq:TaylorStd} for (the extension of) ordinary smooth functions.
\end{proof}
The standard and infinitesimal parts of general Fermat spaces will
be discussed in \citep[Sec. 4.2]{GioWu15b}.

\subsection*{Example: Divergence and curl}

Now we use suitable infinitesimals in Fermat reals to revisit the
classical concepts of divergence and curl.

Traditionally, $\text{div}\vec{A}(x)$ is understood as the density
of the flux of a vector field $\vec{A}\in\Coo(U,\R^{3})$ through
an ``infinitesimal parallelepiped'' centered at the point $x$ in
an open set $U\subseteq\R^{3}$. To formalize this concept, we take
three vectors $\vec{h}_{1}$, $\vec{h}_{2}$, $\vec{h}_{3}\in\FR^{3}$
and express them with respect to a fixed base $\vec{e}_{1}$, $\vec{e}_{2}$,
$\vec{e}_{3}\in\R^{3}$ as 
\[
\vec{h}_{i}=k_{i}^{1}\cdot\vec{e}_{1}+k_{i}^{2}\cdot\vec{e}_{2}+k_{i}^{3}\cdot\vec{e}_{3},\quad\text{where }k_{i}^{j}\in\FR.
\]
We say that $P:=(x,\vec{h}_{1},\vec{h}_{2},\vec{h}_{3})$ is a \emph{(third
order) infinitesimal parallelepiped} if
\[
x\in\R^{3}\text{ and each }k_{i}^{j}\in D_{3}.
\]
Here the requirement that each $k_{i}^{j}\in D_{3}$ is to make sure
that the multiplication of any four (and hence more) such $k_{i}^{j}$'s
is zero. The flux of the vector field $\vec{A}$ through such a parallelepiped
(toward the outer) is, by definition, the sum of the fluxes through
every ``face'': 
\begin{align*}
\int_{P}\vec{A}\boldsymbol{\cdot}\vec{n}\,\diff{S} & :=\int_{-\frac{1}{2}}^{\frac{1}{2}}\diff{t}\int_{-\frac{1}{2}}^{\frac{1}{2}}\ext{\vec{A}}(x-\frac{1}{2}\vec{h}_{3}+t\vec{h}_{1}+s\vec{h}_{2})\boldsymbol{\cdot}\vec{(h}_{2}\times\vec{h}_{1})\,\diff{s}+\\
{} & +\int_{-\frac{1}{2}}^{\frac{1}{2}}\diff{t}\int_{-\frac{1}{2}}^{\frac{1}{2}}\ext{\vec{A}}(x+\frac{1}{2}\vec{h}_{3}+t\vec{h}_{1}+s\vec{h}_{2})\boldsymbol{\cdot}(\vec{h}_{1}\times\vec{h}_{2})\,\diff{s}+\cdots,
\end{align*}
where the fluxes through the two opposite faces spanned by $\vec{h}_{1}$
and $\vec{h}_{2}$ are explicit in the above formula, $\times$ is
the cross product, and the dots $\cdots$ indicate similar terms for
the other faces of the parallelepiped. Let us note that e.g.\ the
function $s\mapsto\ext{\vec{A}}(x-\frac{1}{2}\vec{h}_{3}+t\vec{h}_{1}+s\vec{h}_{2})$
is a quasi-standard smooth function. In this case, the fixed parameter
is $p=(x,t,\vec{h}_{1},\vec{h}_{2},\vec{h}_{3})\in\FR^{13}$. It is
easy to prove that if $\vec{A}\in\Coo(U,\R^{3})$ and the oriented
volume ${\rm Vol}(\vec{h}_{1},\vec{h}_{2},\vec{h}_{3})$ of the infinitesimal
parallelepiped $P=(x,\vec{h}_{1},\vec{h}_{2},\vec{h}_{3})$, is not
zero, then the following ratio between infinitesimals exists and is
independent of $\vec{h}_{1}$, $\vec{h}_{2}$, $\vec{h}_{3}$ (see
\citep{Gio10a} for the notion of ratio between infinitesimals): 
\[
{\rm div}\vec{A}(x):=\frac{1}{{\rm Vol}(\vec{h}_{1},\vec{h}_{2},\vec{h}_{3})}\cdot\int_{P}\vec{A}\boldsymbol{\cdot}\vec{n}\,\diff{S}\in\R.
\]
To define the curl of a vector field $\vec{A}\in\Coo(U,\R^{3})$,
we say that $C:=(x,\vec{h}_{1},\vec{h}_{2})$ is a \emph{(second order)
infinitesimal cycle }if 
\[
x\in U\text{ and each }k_{i}^{j}\in D_{2}.
\]
Here the requirement that each $k_{i}^{j}\in D_{2}$ is to make sure
that the multiplication of any three (and hence more) such $k_{i}^{j}$'s
is zero. The circulation of the vector field $\vec{A}$ on this cycle
$C$ is defined as the sum of the ``line integrals'' over each ``side'':
\[
\int_{C}\vec{A}\boldsymbol{\cdot}\vec{t}\,\diff{l}:=\int_{-\frac{1}{2}}^{\frac{1}{2}}\ext{\vec{A}}(x-\frac{1}{2}\vec{h}_{2}+t\vec{h}_{1})\boldsymbol{\cdot}\vec{h}_{1}\,\diff{t}-\int_{-\frac{1}{2}}^{\frac{1}{2}}\ext{\vec{A}}(x+\frac{1}{2}\vec{h}_{2}+t\vec{h}_{1})\boldsymbol{\cdot}\vec{h}_{1}\,\diff{t}+\cdots,
\]
where the line integrals of the two opposite sides spanned by $\vec{h}_{1}$
are explicit in the above formula, and the dots $\cdots$ indicate
similar terms for the other sides of the cycle $C$ (noting the consistency
of the orientation on each side). Once again, using exactly the calculations
frequently done in elementary courses of physics, one can prove that
there exists one and only one vector, $\text{{\rm curl}}\,\vec{A}(x)\in\R^{3}$,
such that 
\[
\int_{C}\vec{A}\boldsymbol{\cdot}\vec{t}\,\diff{l}=\text{{\rm curl}}\vec{A}(x)\boldsymbol{\cdot}(\vec{h}_{1}\times\vec{h}_{2})
\]
for every infinitesimal cycle $C=(x,\vec{h}_{1},\vec{h}_{2})$, representing
thus the (vector) density of the circulation of $\vec{A}$.

\section{\label{sec:Inequalities-for-integrals}Inequalities for integrals}

In this section, we derive similar inequalities for integrals of smooth
functions to the classical ones for integrals of ordinary smooth functions.
Thanks to the total order on $\FR$, we are able to prove the monotonicity
of integrals, a replacement of the inequality for absolute value of
smooth functions, and the Cauchy-Schwarz inequality, with respect
to arbitrary integration interval. The strategy is, we first prove
inequality for real integration interval, and then use integration
by substitution for arbitrary integration interval. 

Recall (see e.g.\ \citep{Gio10b}) that if $x$, $y\in\FR$, then
$x\le y$ if and only if for any $x_{t},y_{t}\in\R_{o}[t]$ with $x=[x_{t}]$
and $y=[y_{t}]$, there exists $z_{t}\in\R_{o}[t]$ such that $[z_{t}]=0$
in $\FR$ and $x_{t}\le y_{t}+z_{t}$ for $t\in\R_{\ge0}$ sufficiently
small. In proving these integral inequalities, we will use \citep[Thm. 28]{Gio10b}:
if $(x_{t})_{t}$ is any little-oh polynomial representing $x\in\FR$,
then $x\ge0$ is equivalent to $x_{t}\ge0$ for $t$ sufficiently
small, or $x=0$ in $\FR$ (i.e.,~$x_{t}=o(t)$ as $t\to0^{+}$ in
$\R$).

The first two results concerns the monotonicity of integrals:
\begin{lem}
\label{lem:monotoneIntegral}Let $J$ be a non-infinitesimal interval
of $\FR$, and let $f$, $g:J\ra\FR$ be smooth functions. Let $a$,
$b\in J\cap\R$, with $a<b$. Then the assumption
\begin{equation}
g(x)\le f(x)\quad\forall x\in[a,b]_{\R}\label{eq:g-le-f}
\end{equation}
implies $\int_{a}^{b}g\le\int_{a}^{b}f$.\end{lem}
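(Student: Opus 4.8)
The plan is to reduce to a single nonnegative integrand and then track the order structure of $\FR$ coefficient by coefficient. First I would set $\phi:=f-g$, which is smooth on $J$, and use the additivity in Thm~\ref{thm:classicalFormulasOfIntegralCalculus}\ref{enu:additivity} to rewrite the claim $\int_a^b g\le\int_a^b f$ as $\int_a^b\phi\ge 0$; the hypothesis \eqref{eq:g-le-f} becomes $\phi(s)\ge 0$ for every real $s\in[a,b]_\R$. Since $a,b\in\R$, Lem~\ref{lem:IntegralAsfiniteSum} furnishes a concrete little-oh polynomial representing $\int_a^b\phi$, namely a finite sum of ordinary Riemann integrals over real $s$ of the real local expressions of $\phi$. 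By \citep[Thm. 28]{Gio10b}, to conclude $\int_a^b\phi\ge 0$ it suffices to show that this representative is eventually $\ge 0$, or that $\int_a^b\phi=0$.

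The natural first step is to pass to standard parts. Taking $t\to 0^+$ in the representative of Lem~\ref{lem:IntegralAsfiniteSum} identifies $\st{\left(\int_a^b\phi\right)}$ with the ordinary integral $\Rint_a^b\st\phi$ of the shadow integrand, which is smooth by Lem~\ref{lem:stdPartAndShadowOfFunction}\ref{enu:stdFcnIsSmooth}; since $\phi(s)\ge 0$ forces $\st\phi(s)\ge 0$, this real number is $\ge 0$. If it is strictly positive then $\int_a^b\phi>0$ and we are done. The delicate case is $\Rint_a^b\st\phi=0$: a nonnegative continuous function with vanishing integral is identically zero, so $\st\phi\equiv 0$ on $[a,b]_\R$, whence $\phi$ becomes $D_{\infty}$-valued there and the sign of $\int_a^b\phi$ is decided purely by its infinitesimal coefficients.

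To handle this case I would expand $\phi(s)$, for real $s$, into its quasi-decomposition $\phi(s)=\sum_{\ell=1}^{H}C_\ell(s)\,\diff{t}_{w_\ell}$ with $w_1>\dots>w_H\ge 0$, exactly as in the argument leading to \eqref{eq:f(a+r)} in Thm~\ref{thm:uniqueness}; the coefficients $C_\ell$ are ordinary smooth functions of $s$, and, after taking a common finite set of exponents $w_\ell$ over the finitely many local pieces, they patch to globally continuous functions on $[a,b]_\R$ because they are coefficients of the single Fermat real $\phi(s)$. Integrating termwise via \eqref{eq:RiemannInt} gives a quasi-decomposition $\int_a^b\phi=\sum_\ell\left(\Rint_a^b C_\ell\right)\diff{t}_{w_\ell}$. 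Then I would run a finite induction on $\ell$ in the order in which the total order of $\FR$ reads off signs (standard part first, then the largest infinitesimal, and so on): assuming $C_1\equiv\dots\equiv C_{\ell-1}\equiv 0$, the pointwise relation $\phi(s)\ge 0$ together with the sign rule of $\FR$ forces the leading surviving coefficient $C_\ell(s)$ to be $\ge 0$ for every real $s$; hence $\Rint_a^b C_\ell\ge 0$, and if this is strictly positive then $\int_a^b\phi>0$ and we stop, while if it is zero the continuity and nonnegativity of $C_\ell$ give $C_\ell\equiv 0$, advancing the induction. Since there are finitely many $w_\ell$, the process terminates either with a strictly positive leading coefficient or with all $C_\ell\equiv 0$, in which case $\int_a^b\phi=0$; in both cases $\int_a^b\phi\ge 0$.

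The main obstacle is precisely the degenerate case $\Rint_a^b\st\phi=0$: nonnegativity in $\FR$ is governed by the leading nonzero coefficient of the decomposition, whose order may vary with $s$, so $\phi(s)\ge 0$ does not transfer to the integral in one stroke and the pointwise thresholds of \citep[Thm. 28]{Gio10b} are non-uniform in $s$. The device that overcomes this is the coefficient-by-coefficient induction above, whose key mechanism is that a nonnegative continuous coefficient with vanishing Riemann integral must vanish identically, which simultaneously rules out hidden cancellation and forces the next order to inherit the nonnegativity.
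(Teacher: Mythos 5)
Your proposal is correct, but it takes a genuinely different route from the paper's own proof. The paper works at the level of little-oh polynomial representatives: after the same reduction to a single nonnegative integrand and the same appeal to Lem.~\ref{lem:IntegralAsfiniteSum}, it invokes \citep[Thm.~28]{Gio10b} pointwise in $s$ (either the representative $\alpha_i((p_i)_t,s)$ is eventually nonnegative, or the value is null in $\FR$), absorbs the non-uniformity of the threshold in $s$ into a ``defect'' function $w_i(t,s)$ that is pointwise $o(t)$, and then uses the Lebesgue dominated convergence theorem to show that $\Rint_{\bar{x}_{i-1}}^{\bar{x}_i}w_i(t,s)\diff{s}=o(t)$, so that the representative of the integral dominates a representative of $0$. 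You instead stay at the level of decompositions: you expand $\phi=f-g$ at real points into finitely many coefficient functions (as in the proof of Thm.~\ref{thm:uniqueness}), patch them globally, integrate termwise, and read off the sign of $\int_a^b\phi$ coefficient-by-coefficient in the sign-reading order of $\FR$, the engine being that a nonnegative continuous function with vanishing Riemann integral is identically zero. Your route buys independence from dominated convergence (only elementary facts about continuous functions are used) and a sharper dichotomy: either $\int_a^b\phi>0$, or $\phi$ vanishes at every real point of $[a,b]$ and $\int_a^b\phi=0$; note that your induction confronts exactly the same difficulty (the order of the leading nonzero coefficient of $\phi(s)$ varies with $s$) that the paper's dominated-convergence step is designed to tame. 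Two steps in your sketch should be written out to make the argument complete, though both are routine: (a) the global patching of the $C_\ell$, which follows from Lem.~\ref{lem:quasi-decomposition} once a common finite exponent set is fixed across the finitely many local pieces; and (b) the termwise integration formula $\int_a^b\phi=\sum_\ell\bigl(\Rint_a^b C_\ell\bigr)\diff{t}_{w_\ell}$, which is not quite a citation of \eqref{eq:RiemannInt}: it follows by applying Taylor's formula \eqref{eq:TaylorStd} to the ordinary smooth functions $q\mapsto\Rint_{\bar{x}_{i-1}}^{\bar{x}_i}\alpha_i(q,s)\diff{s}$ at the standard point $\st{p_i}$ with nilpotent increment $p_i-\st{p_i}$, and then commuting the $q$-derivatives with the Riemann integral (classical differentiation under the integral sign), so that the coefficients of the integral are exactly the Riemann integrals of the coefficients of $\phi$.
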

\begin{proof}
It suffices to prove the conclusion for $g=0$. By Lem.~\ref{lem:IntegralAsfiniteSum},
we can write 
\[
\int_{a}^{b}f=\sum_{i=1}^{n}\int_{\bar{x}_{i-1}}^{\bar{x}_{i}}\ext{\alpha_{i}}(p_{i},s)\diff{s},
\]
 where $\alpha_{i}\in\Coo(U_{i}\times V_{i},\R)$, $U_{i}\subseteq\R^{{\sf p}_{i}}$,
$p_{i}\in\ext{U_{i}}$, $V_{i}=(x_{i}-\delta_{i},x_{i}+\delta_{i})_{\R}$,
$\delta_{i}\in\R_{>0}$, $\bar{x}_{i}\in(x_{i+1}-\delta_{i+1},x_{i}+\delta_{i})_{\R}=V_{i}\cap V_{i+1}\subset(x_{i},x_{i+1})_{\R}$,
and
\begin{gather}
f(x)=\ext{\alpha_{i}}(p_{i},x)\quad\forall x\in\ext{V_{i}}\cap J,\label{eq:f-alpha-V_i}\\
a=\bar{x}_{0}=x_{1}<x_{2}<\ldots<x_{n}=\bar{x}_{n}=b.\nonumber 
\end{gather}
For each $i=1,\ldots,n$ and each $s\in[\bar{x}_{i-1},\bar{x}_{i}]_{\R}$,
we have $a=\bar{x}_{0}\le\bar{x}_{i-1}\le s\le\bar{x}_{i}\le\bar{x}_{n}=b$,
so $s\in[a,b]_{\R}$. Moreover, $\bar{x}_{i-1}\in V_{i}$ and $\bar{x}_{i}\in V_{i}$
yield that $x_{i}-\delta_{i}<\bar{x}_{i-1}\le s\le\bar{x}_{i}<x_{i}+\delta_{i}$,
so $s\in V_{i}\cap J$. Thereby, assumption \eqref{eq:g-le-f} and
equality \eqref{eq:f-alpha-V_i} imply that $f(s)=\ext{\alpha_{i}}(p_{i},s)\ge0$.
The mentioned result \citep[Thm.\ 28]{Gio10b} yields that
\begin{gather}
\forall^{0}t\ge0:\ \alpha_{i}((p_{i})_{t},s)\ge0\nonumber \\
\text{or}\label{eq:alternOrder}\\
\ext{\alpha_{i}}(p_{i},s)=0\text{ in }\FR.\nonumber 
\end{gather}
Set $w_{i}(t,s):=0$ if $\alpha_{i}((p_{i})_{t},s)\ge0$ and $w_{i}(t,s):=\alpha_{i}((p_{i})_{t},s)$
otherwise. Then,
\begin{equation}
\forall t\in\R_{\ge0}\,\forall s\in[\bar{x}_{i-1},\bar{x}_{i}]_{\R}:\ \alpha_{i}((p_{i})_{t},s)\ge w_{i}(t,s),\label{eq:alpha-ge-w}
\end{equation}
and from \eqref{eq:alternOrder} we get that $w_{i}(t,s)=o(t)$ as
$t\ra0^{+}$ in $\R$. Note also that $w_{i}(t,-):[\bar{x}_{i-1},\bar{x}_{i}]_{\R}\ra\R$
is integrable by definition, so that \eqref{eq:alpha-ge-w} implies
\begin{equation}
\Rint_{\bar{x}_{i-1}}^{\bar{x}_{i}}\alpha_{i}((p_{i})_{t},s)\diff{s}\ge\Rint_{\bar{x}_{i-1}}^{\bar{x}_{i}}w_{i}(t,s)\diff{s}\quad\forall t\in\R_{\ge0}.\label{eq:inequality-alpha-w}
\end{equation}
But Lebesgue dominated convergence theorem and the fact that $w_{i}(t,s)=o(t)$
yield
\begin{equation}
\lim_{t\ra0^{+}}\frac{1}{t}\cdot\Rint_{\bar{x}_{i-1}}^{\bar{x}_{i}}w_{i}(t,s)\diff{s}=\Rint_{\bar{x}_{i-1}}^{\bar{x}_{i}}\lim_{t\ra0^{+}}\frac{w_{i}(t,s)}{t}\diff{s}=0,\label{eq:WIsZero}
\end{equation}
i.e., the right hand side of the inequality \eqref{eq:inequality-alpha-w}
viewed as a function of $t$ is $o(t)$ as $t\ra0^{+}$ in $\R$,
and hence it is in $\R_{o}[t]$ and it represents $0$ in $\FR$.
In other words, we have proved that
\[
\int_{\bar{x}_{i-1}}^{\bar{x}_{i}}\ext{\alpha}_{i}(p_{i},s)\diff{s}\geq0.
\]
 We thus have
\begin{align*}
\int_{a}^{b}f & =\sum_{i=1}^{n}\int_{\bar{x}_{i-1}}^{\bar{x}_{i}}\ext{\alpha}_{i}(p_{i},s)\diff{s}\geq0,
\end{align*}
which proves our conclusion. We finally note that in \eqref{eq:WIsZero},
all the endpoints of the integration intervals, which include $a=\bar{x}_{0}$
and $b=\bar{x}_{n}$, do not depend on $t$.
\end{proof}
We can now extend this result to the case where the endpoints $a$,
$b$ of the integration interval are arbitrary Fermat reals.
\begin{thm}
\label{thm:monotoneIntegral}Let $J$ be a non-infinitesimal interval
of $\FR$, and let $f$, $g:J\ra\FR$ be smooth functions. Let $a$,
$b\in J$, with $a<b$. Then the assumption
\begin{equation}
g(x)\le f(x)\quad\forall x\in[a,b]\label{eq:g-le-f-FR}
\end{equation}
implies $\int_{a}^{b}g\le\int_{a}^{b}f$.
\end{thm}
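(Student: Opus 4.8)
The plan is to reduce to a non-negative integrand and then transport the problem to the \emph{fixed real} interval $[0,1]$ by integration by substitution, where Lem.~\ref{lem:monotoneIntegral} applies directly. First I would note that, by linearity of the integral (Thm.~\ref{thm:classicalFormulasOfIntegralCalculus}\ref{enu:additivity} and \ref{enu:homog}), it suffices to prove the implication ``$F(x)\ge 0$ for all $x\in[a,b]$'' $\Rightarrow$ ``$\int_a^b F\ge 0$'' for a single smooth $F:J\ra\FR$; one then applies this to $F:=f-g$, whose non-negativity on $[a,b]$ is exactly the hypothesis \eqref{eq:g-le-f-FR}.

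Next I would set $\phi(s):=a+(b-a)s$ for $s\in[0,1]$. Since $a<b$ gives $b-a>0$, and since $0\le s\le 1$ for every $s\in[0,1]$, the totality of the order on $\FR$ yields $a\le\phi(s)\le b$, so $\phi$ maps $[0,1]$ into $[a,b]\subseteq J$; moreover $\phi$ is smooth (affine) with $\phi'(s)=b-a$, $\phi(0)=a$ and $\phi(1)=b$. Hence the composite $H:=(F\circ\phi)\cdot(b-a):[0,1]\ra\FR$ is smooth, and integration by substitution (Thm.~\ref{thm:integrationByChangeOfVariable}) gives
\[
\int_a^b F \;=\; \int_{\phi(0)}^{\phi(1)}F \;=\; \int_0^1 F[\phi(s)]\cdot\phi'(s)\,\diff{s}\;=\;\int_0^1 H.
\]

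Finally I would invoke Lem.~\ref{lem:monotoneIntegral} on the non-infinitesimal interval $[0,1]$ (with real endpoints $0<1$), applied to the integrand $H$ against the zero function. That lemma only demands $H(s)\ge 0$ at the \emph{real} points $s\in[0,1]_\R$; for such $s$ one has $\phi(s)\in[a,b]$ (in general a non-real Fermat point), so $F[\phi(s)]\ge 0$ by the standing assumption on $F$, and $b-a>0$, whence $H(s)=F[\phi(s)]\cdot(b-a)\ge 0$. Thus $\int_0^1 H\ge 0$, i.e.\ $\int_a^b F\ge 0$, which is the desired conclusion.

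The delicate point is not computational but conceptual: the substitution trades the arbitrary Fermat endpoints $a,b$ for the fixed real endpoints $0,1$, at the cost of requiring non-negativity of $F$ at the points $\phi(s)$ for $s\in[0,1]_\R$, which are \emph{generally not standard reals}. This is precisely where the full-strength hypothesis \eqref{eq:g-le-f-FR}, ranging over all of $[a,b]$ rather than only over $[a,b]_\R$, is indispensable, and where the total order on $\FR$ is used to guarantee $\phi([0,1])\subseteq[a,b]$.
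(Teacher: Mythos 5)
Your proposal is correct and follows essentially the same route as the paper: reduce to a non-negative integrand (the paper takes $g=0$ without loss of generality, you take $F:=f-g$), transport to the real interval $[0,1]$ via the substitution $\sigma\mapsto a+(b-a)\sigma$ (Thm.~\ref{thm:integrationByChangeOfVariable}), and then apply Lem.~\ref{lem:monotoneIntegral} there, using exactly the observation that the points $a+(b-a)\sigma$ need not be standard, which is why the hypothesis over all of $[a,b]$ is needed. The only cosmetic difference is that you keep the factor $b-a$ inside the integrand $H$, whereas the paper factors it out and uses that a product of a positive and a non-negative element of $\FR$ is non-negative.
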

Note that in this theorem (also the next two theorems), we include
the case when the integration interval is infinitesimal, i.e., $\st{a}=\st{b}$.
\begin{proof}
As above, without loss of generality, we may assume $g=0$. Using
integration by substitution (Thm.~\ref{thm:integrationByChangeOfVariable}),
we can write
\begin{equation}
\int_{a}^{b}f=(b-a)\cdot\int_{0}^{1}f[a+(b-a)\sigma]\diff{\sigma}\label{eq:inequality-substitution}
\end{equation}
For each $\sigma\in[0,1]_{\R}$, we have $a\leq a+(b-a)\sigma\leq b$.
Note that $a+(b-a)\sigma$ is in general not standard if $a$ is not
standard, which justifies the stronger assumption \eqref{eq:g-le-f-FR}
compared to the previous \eqref{eq:g-le-f}. Assumption \eqref{eq:g-le-f-FR}
yields that $f\left[a+(b-a)\sigma\right]\ge0$. Lem.~\ref{lem:monotoneIntegral}
implies that the right hand side of the integral in \eqref{eq:inequality-substitution}
is non-negative, from which the conclusion follows.
\end{proof}
Although we haven't justified the validity of integrals of absolute
values of smooth functions, the following result is a possible substitute
of the property $\left|\int_{a}^{b}f\right|\le\int_{a}^{b}|f|$ for
classical integrals. 
\begin{thm}
\label{thm:substIntAbsValue}Let $J$ be a non-infinitesimal interval
of $\FR$, and let $f$, $g:J\ra\FR$ be smooth functions. Let $a$,
$b\in J$, with $a<b$. Then the assumption
\begin{equation}
|f(x)|\le g(x)\quad\forall x\in[a,b]\label{eq:abs-f-le-g}
\end{equation}
implies $\left|\int_{a}^{b}f\right|\le\int_{a}^{b}g$.\end{thm}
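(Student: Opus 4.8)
The plan is to reduce the absolute-value inequality to an ordinary two-sided inequality and then apply the monotonicity theorem already established for arbitrary Fermat-real endpoints. The key observation is that, because $\FR$ is totally ordered, the hypothesis $|f(x)|\le g(x)$ for all $x\in[a,b]$ is equivalent to the pair of inequalities $-g(x)\le f(x)\le g(x)$ for all $x\in[a,b]$, exactly as in a totally ordered abelian group where $|x|=\max\{x,-x\}$. Thus I would first record this equivalence, noting that the totality of $\le$ on $\FR$ (recalled in Section~\ref{sec:Background-for-FR}) is precisely what makes it available; this is also the reason the statement is phrased as a \emph{substitute} for $\left|\int_a^b f\right|\le\int_a^b|f|$, since we never need the (non-smooth) function $|f|$ itself, only the smooth bound $g$.

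Next I would apply Thm.~\ref{thm:monotoneIntegral} twice. The right-hand inequality $f(x)\le g(x)$ on $[a,b]$ gives directly $\int_a^b f\le\int_a^b g$. For the left-hand inequality, I first note that $-g=(-1)\cdot g$ is again a smooth function on $J$, so that Thm.~\ref{thm:monotoneIntegral} applies to the pair $-g\le f$, yielding $\int_a^b(-g)\le\int_a^b f$. By the homogeneity of the integral (Thm.~\ref{thm:classicalFormulasOfIntegralCalculus}\ref{enu:homog}) with scalar $\lambda=-1$, the left-hand side equals $-\int_a^b g$, so this reads $-\int_a^b g\le\int_a^b f$.

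Combining the two conclusions gives $-\int_a^b g\le\int_a^b f\le\int_a^b g$, and, once more invoking the total order on $\FR$, this sandwich is exactly the assertion $\left|\int_a^b f\right|\le\int_a^b g$. There is essentially no hard step: the proof is a formal order-theoretic manipulation built on monotonicity and linearity. The only point requiring a moment's care is making sure every invoked result is valid for the \emph{arbitrary} Fermat-real endpoints $a,b\in J$, including the infinitesimal case $\st a=\st b$; but this is precisely the generality in which Thm.~\ref{thm:monotoneIntegral} was stated (and flagged in the remark following it), so no separate argument is needed. If one wished to be fully self-contained one could instead unwind everything through the local expression Lem.~\ref{lem:IntegralAsfiniteSum} and the pointwise order characterization \citep[Thm.~28]{Gio10b}, but the ``global'' route through monotonicity is shorter and is the style emphasized in this section.
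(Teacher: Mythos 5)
Your proof is correct, and it takes a genuinely different route from the paper's. The paper proves this theorem ``locally'': it first treats real endpoints $a,b\in\R$ by decomposing the integral via Lem.~\ref{lem:IntegralAsfiniteSum}, passing to little-oh polynomial representatives, invoking the order characterization \citep[Thm.~28]{Gio10b}, and introducing correction functions $w_{i}(t,s)=o(t)$ exactly as in Lem.~\ref{lem:monotoneIntegral}; it then extends to arbitrary Fermat-real endpoints by the substitution $\int_{a}^{b}f=(b-a)\int_{0}^{1}f[a+(b-a)\sigma]\diff{\sigma}$. You instead observe that the total order on $\FR$ turns the hypothesis into the two-sided bound $-g\le f\le g$ on $[a,b]$, apply the already-established Thm.~\ref{thm:monotoneIntegral} twice, and convert $\int_{a}^{b}(-g)$ into $-\int_{a}^{b}g$ by homogeneity (Thm.~\ref{thm:classicalFormulasOfIntegralCalculus}\ref{enu:homog} with $\lambda=-1$), so the sandwich $-\int_{a}^{b}g\le\int_{a}^{b}f\le\int_{a}^{b}g$ gives the claim; the only small point to note, which you do handle, is that $-g$ is again smooth. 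Your argument is shorter, reuses prior results rather than redoing the representative-level analysis, and is in fact closer to the ``global'' style the paper itself says it prefers (Rem.~\ref{rem:integral-local}); what the paper's proof buys is self-containedness at the level of little-oh polynomials --- it needs neither homogeneity nor the two-sided trick, and it displays once more the local technique that also yields the Cauchy--Schwarz inequality (Thm.~\ref{thm:Cauchy-Schwarz}), for which your order-theoretic shortcut would not suffice.
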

\begin{proof}
The proof is conceptually identical to those of Lem.~\ref{lem:monotoneIntegral}
and Thm.~\ref{thm:monotoneIntegral}. We first prove the conclusion
for $a$, $b\in\R$. The interval $[a,b]_{\R}$ can be covered by
a finite number of intervals $X_{i}:=(x_{i}-\delta_{i},x_{i}+\delta_{i})_{\R}$,
where we can write both $f(x)=\ext{\alpha}_{i}(p_{i},x)$ and $g(x)=\ext{\beta}_{i}(q_{i},x)$
for every $x\in\ext{X_{i}}\cap[a,b]$. Furthermore, we have
\begin{align*}
\int_{a}^{b}f & =\sum_{i=1}^{n}\int_{\bar{x}_{i-1}}^{\bar{x}_{i}}\ext{\alpha}_{i}(p_{i},s)\diff{s},\\
\int_{a}^{b}g & =\sum_{i=1}^{n}\int_{\bar{x}_{i-1}}^{\bar{x}_{i}}\ext{\beta}_{i}(q_{i},s)\diff{s},
\end{align*}
where $\bar{x}_{i}\in(x_{i+1}-\delta_{i+1},x_{i}+\delta_{i})_{\R}$
and $a=\bar{x}_{0}=x_{1}<x_{2}<\ldots<x_{n}=\bar{x}_{n}=b$. For each
$i=1,\ldots,n$ and each $s\in[\bar{x}_{i-1},\bar{x}_{i}]_{\R}$,
assumption \eqref{eq:abs-f-le-g} and \citep[Thm.\ 28]{Gio10b} yield
that
\begin{gather*}
\forall^{0}t\in\R_{\ge0}:\ |\alpha_{i}((p_{i})_{t},s)|\le\beta_{i}((q_{i})_{t},s)\\
\text{or}\\
|\ext{\alpha}_{i}(p_{i},s)|=\ext{\beta}_{i}(q_{i},s)\text{ in }\FR.
\end{gather*}
Set
\[
w_{i}(t,s):=\begin{cases}
0, & \text{if }|\alpha_{i}((p_{i})_{t},s)|\le\beta_{i}((q_{i})_{t},s)\\
|\alpha_{i}((p_{i})_{t},s)|-\beta_{i}((q_{i})_{t},s), & \text{otherwise.}
\end{cases}
\]
Therefore,
\begin{equation}
\forall t\in\R_{\ge0}\,\forall s\in[\bar{x}_{i-1},\bar{x}_{i}]_{\R}:\ \left|\alpha_{i}((p_{i})_{t},s)\right|\le\beta_{i}((q_{i})_{t},s)+w_{i}(t,s).\label{eq:abs-alpha-le-beta+w}
\end{equation}
As in the proof of Lem.~\ref{lem:monotoneIntegral}, one can show
that for any fixed $s\in[\bar{x}_{i-1},\bar{x}_{i}]_{\R}$, $w_{i}(t,s)=o(t)$,
and for any fixed $t\in\R_{\geq0}$, the function $w_{i}(t,-):[\bar{x}_{i-1},\bar{x}_{i}]\ra\R$
is integrable. Again, one gets $\int_{\bar{x}_{i-1}}^{\bar{x}_{i}}w_{i}(t,s)\diff{s}=o(t)$,
and as a consequence, $\int_{\bar{x}_{i-1}}^{\bar{x}_{i}}w_{i}(t,s)\diff{s}\in\R_{o}[t]$
and it represents $0$ in $\FR$. From inequality \eqref{eq:abs-alpha-le-beta+w}
we get the following inequality of little-oh polynomials:
\begin{align*}
\left|\sum_{i=1}^{n}\Rint_{\bar{x}_{i-1}}^{\bar{x}_{i}}\alpha_{i}((p_{i})_{t},s)\diff{s}\right| & \leq\sum_{i=1}^{n}\Rint_{\bar{x}_{i-1}}^{\bar{x}_{i}}\left|\alpha_{i}((p_{i})_{t},s)\right|\diff{s}\\
 & \le\sum_{i=1}^{n}\Rint_{\bar{x}_{i-1}}^{\bar{x}_{i}}\beta_{i}((q_{i})_{t},s)\diff{s}+\sum_{i=1}^{n}\Rint_{\bar{x}_{i-1}}^{\bar{x}_{i}}w_{i}(t,s)\diff{s}.
\end{align*}
Since the last summand in the above formula represents zero in $\FR$,
this proves our conclusion for $a$, $b\in\R$.

Now, we can proceed as in the proof of Thm.~\ref{thm:monotoneIntegral}:
\begin{align*}
\left|\int_{a}^{b}f\right| & =(b-a)\cdot\left|\int_{0}^{1}f\left[a+(b-a)\sigma\right]\diff{\sigma}\right|\\
 & \le(b-a)\cdot\int_{0}^{1}g\left[a+(b-a)\sigma\right]\diff{\sigma}\\
 & =\int_{a}^{b}g.
\end{align*}

\end{proof}
We have just showed that Thm.~\ref{thm:substIntAbsValue} follows
from Lem.~\ref{lem:IntegralAsfiniteSum} and the analogous property
of the Riemann integral. In the same way, we can prove the following:
\begin{thm}[Cauchy-Schwarz inequality]
\label{thm:Cauchy-Schwarz}Let $J$ be a non-infinitesimal interval
of $\FR$, and let $f$, $g:J\ra\FR$ be smooth functions. Let $a$,
$b\in J$, with $a<b$. Then $(\int_{a}^{b}fg)^{2}\le(\int_{a}^{b}f^{2})\cdot(\int_{a}^{b}g^{2})$.
\end{thm}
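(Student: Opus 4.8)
The plan is to mimic the two-step strategy used for Thm.~\ref{thm:substIntAbsValue}: first prove the inequality when the endpoints $a$, $b$ are real, reducing it to the classical Cauchy-Schwarz inequality for Riemann integrals at the level of little-oh polynomial representatives; then promote it to arbitrary $a,b\in J$ by integration by substitution.

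For the first step (so $a,b\in\R$), I would cover $[a,b]_{\R}$ by finitely many intervals $X_i=(x_i-\delta_i,x_i+\delta_i)_{\R}$ on which \emph{both} $f$ and $g$ admit local expressions $f(x)=\ext{\alpha_i}(p_i,x)$ and $g(x)=\ext{\beta_i}(q_i,x)$, using a common refinement of the two covers exactly as in the proof of Thm.~\ref{thm:substIntAbsValue}, and then choose the points $\bar x_i$ and the partition $a=\bar x_0<\dots<\bar x_n=b$ as there. Since $fg$, $f^2$ and $g^2$ then have local expressions $\ext{(\alpha_i\beta_i)}$, $\ext{(\alpha_i^2)}$ and $\ext{(\beta_i^2)}$, Lem.~\ref{lem:IntegralAsfiniteSum} supplies explicit representatives: fixing $t\in\R_{\ge0}$ and defining the piecewise-smooth (hence integrable) functions $F_t,G_t:[a,b]_{\R}\ra\R$ by $F_t(s):=\alpha_i((p_i)_t,s)$ and $G_t(s):=\beta_i((q_i)_t,s)$ on each $[\bar x_{i-1},\bar x_i)_{\R}$, the maps $t\mapsto\Rint_a^bF_tG_t\,\diff{s}$, $t\mapsto\Rint_a^bF_t^2\,\diff{s}$ and $t\mapsto\Rint_a^bG_t^2\,\diff{s}$ represent $\int_a^bfg$, $\int_a^bf^2$ and $\int_a^bg^2$ in $\FR$.

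Now the classical Cauchy-Schwarz inequality applied for each fixed $t$ gives
\[
\left(\Rint_a^bF_tG_t\,\diff{s}\right)^2\le\left(\Rint_a^bF_t^2\,\diff{s}\right)\cdot\left(\Rint_a^bG_t^2\,\diff{s}\right),
\]
so the little-oh polynomial
\[
t\mapsto\left(\Rint_a^bF_t^2\,\diff{s}\right)\left(\Rint_a^bG_t^2\,\diff{s}\right)-\left(\Rint_a^bF_tG_t\,\diff{s}\right)^2
\]
is non-negative for all $t\in\R_{\ge0}$. Because multiplication of representatives represents the product in $\FR$, this polynomial represents $(\int_a^bf^2)(\int_a^bg^2)-(\int_a^bfg)^2$; hence \citep[Thm.~28]{Gio10b} yields that this difference is $\ge0$ in $\FR$, which is the claim for $a,b\in\R$. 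For the second step I would set $\phi(\sigma):=a+(b-a)\sigma$ and use integration by substitution (Thm.~\ref{thm:integrationByChangeOfVariable}) as in \eqref{eq:inequality-substitution} to write $\int_a^bfg=(b-a)\int_0^1(f\circ\phi)(g\circ\phi)$, and likewise for $f^2$ and $g^2$. Applying the already-proved real-endpoint case to the smooth functions $f\circ\phi$, $g\circ\phi$ on the non-infinitesimal interval $[0,1]$ and then multiplying through by $(b-a)^2\ge0$, which preserves the inequality since $\FR$ is totally ordered, gives the conclusion for arbitrary $a,b\in J$, including the infinitesimal case $\st{a}=\st{b}$.

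I expect no essential difficulty, since everything reduces to already-established machinery. The only point demanding care is the bookkeeping in the first step: arranging a single partition that serves simultaneously for $fg$, $f^2$ and $g^2$, checking that the $F_t,G_t$ are genuinely integrable on $[a,b]_{\R}$, and noting that products of little-oh polynomial representatives represent products in $\FR$, all of which is already implicit in the proof of Thm.~\ref{thm:substIntAbsValue}.
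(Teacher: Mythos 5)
Your proposal is correct and takes essentially the same route the paper intends: the paper gives no separate proof of Thm.~\ref{thm:Cauchy-Schwarz}, stating only that it follows ``in the same way'' as Thm.~\ref{thm:substIntAbsValue}, i.e.\ via Lem.~\ref{lem:IntegralAsfiniteSum} and the analogous property of the Riemann integral on little-oh polynomial representatives for real endpoints, followed by integration by substitution (Thm.~\ref{thm:integrationByChangeOfVariable}) for arbitrary $a,b\in J$. Your observation that this case is even simpler than Thm.~\ref{thm:substIntAbsValue} is accurate, since the classical Cauchy-Schwarz inequality holds unconditionally for the representatives $F_t$, $G_t$, so no $o(t)$ correction terms $w_i(t,s)$ and no appeal to dominated convergence are needed before invoking \citep[Thm.~28]{Gio10b}.
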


\section{\label{sec:Multiple-integrals-and}Multiple integrals and Fubini's
theorem}

In previous sections, we have developed integral calculus with an
interval as integration domain. In this section we will define the
integral (and multiple integral) of a smooth function with a more
general integration domain. The approach is similar to that used for
the Peano-Jordan measure. 

We denote by $\mathscr{I}$ the set of all the intervals of $\bar{\FR}$
(see Sec.~\ref{sec:Background-for-FR}). Thanks to the total order
relation $\le$ on $\FR$, every interval uniquely determines its
endpoints. (We recall that this property is not true e.g. in SDG,
where $\le$ is a pre-order.) Since the domain of every smooth function
defined on an interval of $\FR$ can always be extended to a Fermat
open subset (Lem.~\ref{lem:extensionFcnOutsideBoundaries}), we can
therefore define the integral over an interval as follows:
\begin{defn}
\label{def:nonOrientedInt}Let $J$ be a non-infinitesimal interval,
let $f:J\ra\FR$ be a smooth function, and let $I\in\mathscr{I}$
be an interval contained in $J$. We define
\[
\int_{I}f:=\int_{\inf(I)}^{\sup(I)}f.
\]

\end{defn}
The following lemma permits to order the endpoints of pairwise disjoint
intervals.
\begin{lem}
\label{lem:disjIntervals}If $I$, $J\in\mathscr{I}$ are disjoint
intervals, then either $\sup(I)\le\inf(J)$ or $\sup(J)\le\inf(I)$.\end{lem}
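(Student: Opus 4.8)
The plan is to argue by contradiction, letting the total order on $\FR$ do the combinatorial work. I would write $a_I:=\inf(I)$, $b_I:=\sup(I)$ and $a_J:=\inf(J)$, $b_J:=\sup(J)$, which by the discussion in Section~\ref{sec:Background-for-FR} are exactly the endpoints of the two intervals; in particular the open interior $(a_I,b_I)$ is contained in $I$ whatever its type (open, closed, or half-open), and similarly for $J$. Since $\le$ is a \emph{total} order, I may assume without loss of generality that $a_I\le a_J$, and, using trichotomy, I would reformulate the desired disjunction $b_I\le a_J$ or $b_J\le a_I$ as follows: its failure means exactly $b_I>a_J$ and $b_J>a_I$. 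The whole proof then reduces to showing that these two strict inequalities force a common point of $I$ and $J$, contradicting $I\cap J=\emptyset$.

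The engine is density of the order: if $x<y$ in $\FR$ then the midpoint $z:=\tfrac12(x+y)$ satisfies $x<z<y$, since $\tfrac12$ is invertible and positive and $z-x=y-z=\tfrac12(y-x)>0$. Granting this, in the main case $b_J>a_J$ I would set $m:=\min(b_I,b_J)$, note $m>a_J$ (both $b_I$ and $b_J$ exceed $a_J$), and pick any $x$ with $a_J<x<m$. Then $a_I\le a_J<x<m\le b_I$ gives $x\in(a_I,b_I)\subseteq I$, and $a_J<x<m\le b_J$ gives $x\in(a_J,b_J)\subseteq J$, so $x\in I\cap J$; note this automatically forces $a_I<b_I$, so the interval $I$ cannot be degenerate in this case. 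The remaining possibility $b_J=a_J$ (so $J$ is the singleton $\{a_J\}$, the only non-empty interval with coinciding endpoints) I would dispatch directly: then $b_J>a_I$ reads $a_J>a_I$, and with $b_I>a_J$ one gets $a_I<a_J<b_I$, whence $a_J\in(a_I,b_I)\subseteq I$ and $a_J\in J$, again a contradiction.

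The step I expect to require the most care is the uniform treatment of endpoint types, including infinite endpoints in $\bar{\FR}$. The device that should make this routine is to never test membership at a boundary: I always locate the witness $x$ strictly inside both open interiors $(a_I,b_I)$ and $(a_J,b_J)$, which lie in the respective intervals irrespective of how the endpoints are decorated. Infinite endpoints then cause no difficulty, since by the convention of Section~\ref{sec:The-constant-function} no interval is closed at $\pm\infty$; and whenever the midpoint formula is literally inapplicable (because $a_J=-\infty$ or $m=+\infty$) I would simply replace it by an evident finite point strictly between $a_J$ and $m$, such as $a_J+1$ or $m-1$.
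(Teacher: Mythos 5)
Your proof is correct and takes essentially the same route as the paper's: negate the disjunction and exhibit a point of $I\cap J$, contradicting disjointness. The only difference is one of completeness — the paper splits into four cases according to how $\inf(I),\sup(I)$ compare with $\inf(J),\sup(J)$ and merely asserts that a common point can be found in each case, whereas you reduce to two cases by symmetry and construct the witness explicitly via density of the total order (midpoints, plus the singleton and infinite-endpoint adjustments); note that both arguments tacitly assume the intervals are nonempty, since for an empty interval such as $(a,a)$ the inf/sup and the stated conclusion degenerate.
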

\begin{proof}
Set $a:=\inf(I)$, $b:=\sup(I)$, $\bar{a}:=\inf(J)$ and $\bar{b}:=\sup(J)$.
By contradiction, we assume $\bar{a}<b$ and $a<\bar{b}$. We distinguish
four cases: $a\le\bar{a}$ and $\bar{b}\le b$, $a\le\bar{a}$ and
$b\le\bar{b}$, $\bar{a}\le a$ and $\bar{b}\le b$, $\bar{a}\le a$
and $b\le\bar{b}$. In each case, we can find a point in $I\cap J$.
\end{proof}
This lemma permits to integrate smooth functions over a finite pairwise
disjoint union of intervals:
\begin{thm}
\label{thm:intOnSumIntervals}Let $\left\{ I_{1},\ldots,I_{n}\right\} $
and $\left\{ J_{1},\ldots,J_{m}\right\} $ be two finite pairwise
disjoint families of $\mathscr{I}$ such that
\[
\bigcup_{i=1}^{n}I_{i}=\bigcup_{j=1}^{m}J_{j}=:U.
\]
Let $J$ be a non-infinitesimal interval and let $f:J\ra\FR$ be a
smooth function. Then
\begin{equation}
\sum_{i=1}^{n}\int_{I_{i}}f=\sum_{j=1}^{m}\int_{J_{j}}f=:\int_{U}f.\label{eq:intSumIntervals}
\end{equation}
\end{thm}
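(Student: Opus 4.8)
The plan is to reduce both sides of \eqref{eq:intSumIntervals} to a single sum of integrals taken over a common refinement of the two families, built from all the endpoints at once. First I would invoke Lem.~\ref{lem:disjIntervals}: applied pairwise to a finite disjoint family it yields a total order on that family, so after relabelling I may assume $\sup(I_i)\le\inf(I_{i+1})$ for every $i$ and, likewise, $\sup(J_j)\le\inf(J_{j+1})$ for every $j$. In particular $\inf(I_1)=\inf(J_1)=\inf(U)=:a$ and $\sup(I_n)=\sup(J_m)=\sup(U)=:b$. Since each interval of the two families is contained in $J$ (as required for $\int_{I_i}f$ and $\int_{J_j}f$ to be defined via Def.~\ref{def:nonOrientedInt}), all the endpoints involved lie in $J$, so that every integral below makes sense.

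Next I would collect the finite set $P\subseteq J$ of all endpoints $\inf(I_i),\sup(I_i),\inf(J_j),\sup(J_j)$ and write it, after deleting repetitions and using the totality of the order on $\FR$, as $a=t_0<t_1<\dots<t_N=b$. I call the open intervals $(t_{k-1},t_k)$ the \emph{atoms}. The crucial combinatorial step is the dichotomy: for each atom and each $I_i$, either $(t_{k-1},t_k)\subseteq I_i$ or $(t_{k-1},t_k)\cap I_i=\emptyset$. Indeed, no point of $P$ lies strictly between $t_{k-1}$ and $t_k$, so neither $\inf(I_i)$ nor $\sup(I_i)$ can fall inside the atom; hence $I_i$ either contains $[t_{k-1},t_k]$ or misses the open atom. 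By disjointness of the $I_i$, at most one of them contains a given atom, so an atom is contained in $U$ exactly when it lies in a (unique) $I_i$, and otherwise it misses $U$. The identical argument applies to the $J_j$, whence the collection of atoms contained in $U$ is one and the same whether read off from the $I$'s or from the $J$'s.

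Then I would telescope. For fixed $i$ the endpoints $\inf(I_i)$ and $\sup(I_i)$ are two of the $t_k$, and repeated application of the additivity of the integral with respect to the integration interval (Cor.~\ref{cor:integralAdditiveDomain}) gives $\int_{I_i}f=\int_{\inf(I_i)}^{\sup(I_i)}f=\sum\int_{t_{k-1}}^{t_k}f$, the sum running over the atoms contained in $[\inf(I_i),\sup(I_i)]$; these are precisely the atoms contained in $I_i$, because the open interval $(\inf(I_i),\sup(I_i))$ is always contained in $I_i$ regardless of how the endpoints are attached. Summing over $i$ and using that each atom contained in $U$ sits in exactly one $I_i$, I obtain $\sum_{i}\int_{I_i}f=\sum_{(t_{k-1},t_k)\subseteq U}\int_{t_{k-1}}^{t_k}f$. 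The right-hand side is manifestly independent of which family one started from, so the same computation for the $J_j$ yields $\sum_j\int_{J_j}f$ equal to the identical quantity, proving \eqref{eq:intSumIntervals} and legitimising the notation $\int_U f$.

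I expect the genuine obstacle to be the uniform dichotomy for the atoms rather than the telescoping: one must argue that membership of a point in an interval of the family is constant across each open atom, which rests entirely on the totality of $\le$ on $\FR$ (so that $P$ sorts linearly with nothing strictly between consecutive breakpoints) together with pairwise disjointness (so that a single atom cannot be claimed by two members). Degenerate members such as singletons $[c,c]$ contribute a zero integral and an empty atom and may be discarded at the outset, while infinite endpoints are already excluded implicitly by Def.~\ref{def:nonOrientedInt}, which presupposes $\int_{\inf I}^{\sup I}f$ to be defined.
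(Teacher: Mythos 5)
Your proof is correct, but it runs in the opposite direction to the paper's. The paper \emph{coarsens}: within each family it repeatedly merges adjacent ``joinable'' intervals (those with $\sup(I_i)=\inf(I_{i+1})$ and connected union), each merge preserving the sum by Cor.~\ref{cor:integralAdditiveDomain}, until both families collapse to the connected components of $U$; since the components are intrinsic to $U$, the two sums coincide. You \emph{refine} instead: you cut along the union of all endpoints of both families and telescope each $\int_{I_i}f$ (again by Cor.~\ref{cor:integralAdditiveDomain}) into integrals over the resulting atoms, so that both sums equal the sum over the atoms contained in $U$ --- the classical common-refinement argument. Both routes rest on exactly the same two ingredients, Lem.~\ref{lem:disjIntervals} and Cor.~\ref{cor:integralAdditiveDomain}. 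What the paper's version buys is an invariant depending on $U$ alone (its connected components), which is the cleaner justification for the notation $\int_U f$; what it leaves informal is precisely the claim that the merging procedure terminates at the connected components and that these are intervals. Your version avoids any mention of connectedness: the dichotomy (each atom is either contained in or disjoint from each $I_i$) is fully elementary and rests only on the totality of $\le$ on $\FR$ and pairwise disjointness, at the price of an invariant that a priori refers to both families at once --- which is all one needs, since equality of the two given sums is exactly the statement. Two small points: your justifying clause ``$I_i$ either contains $[t_{k-1},t_k]$'' overstates the dichotomy (take $I_i=(t_{k-1},t_k)$ itself); only containment of the \emph{open} atom follows, and only that is used. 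Also, the uniqueness of the $I_i$ containing a given atom requires the atom to be non-empty, which holds because the order on $\FR$ is dense: the midpoint $\tfrac{1}{2}(t_{k-1}+t_k)$ lies strictly between consecutive breakpoints.
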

\begin{proof}
Set $a_{i}:=\inf(I_{i})$, $b_{i}:=\sup(I_{i})$. Thanks to Lem.~\ref{lem:disjIntervals},
we can always rearrange the indices $i$, $j$ so that
\[
a_{1}\le b_{1}\le\ldots\le a_{n}\le b_{n}.
\]
Since the intervals are pairwise disjoint, an equality of the type
$b_{i}=a_{i+1}$ may happen only if $b_{i}\notin I_{i}$ or $a_{i+1}\notin I_{i+1}$.
If $I_{i}\cup I_{i+1}$ is connected, we could replace $I_{i}$ and
$I_{i+1}$ in the set $\{I_{1},\ldots,I_{n}\}$ by $I_{i}\cup I_{i+1}$
to get another finite pairwise disjoint family $\{I_{1}',\ldots,I_{n-1}'\}$
of $\mathscr{I}$. In this case, because $\int_{I_{i}}f+\int_{I_{i+1}}f=\int_{a_{i}}^{b_{i+1}}f=\int_{I_{i}\cup I_{i+1}}f$,
we know that 
\[
\sum_{i=1}^{n}\int_{I_{i}}f=\sum_{i=1}^{n-1}\int_{I_{i}'}f.
\]
In other words, we can combine two ``joinable'' intervals to get
a new set of finite pairwise disjoint family, and keep the integral
unchanged. After finitely many such procedures, we reach a set of
finite pairwise disjoint family consisting of all the connected components
of $U$, and by the invariance of integrals, we get
\[
\sum_{i=1}^{n}\int_{I_{i}}f=\sum\left\{ \int_{I}f\mid I\text{ is a connected component of }U\right\} .
\]
We arrive at the same result by considering $\sum_{j=1}^{m}\int_{J_{j}}f$.
\end{proof}
In the following definition, we identify the general domains for integrals
of higher dimensions.
\begin{defn}
\label{def:boxAndElementarySets}A\emph{ box} in $\FR^{d}$ is a set
of the form $B=\prod_{i=1}^{d}I_{i}$, where $I_{i}\in\mathscr{I}$
are intervals. We denote by $\mathscr{B}^{d}$ the set of all the
boxes in $\FR^{d}$. An \emph{elementary set} $U\in\mathscr{E}$ is
a set of the form $\bigcup_{i=1}^{n}B_{i}$, where $B_{i}\in\mathscr{B}^{d}$
are boxes.
\end{defn}
\ 
\begin{thm*}
\label{thm:ElementaryAreBoole}$\mathscr{E}$ is a Boolean algebra
with respect to union, intersection and set difference.\end{thm*}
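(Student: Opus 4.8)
The plan is to reduce the whole statement to two facts about individual boxes: that $\mathscr{B}^d$ is closed under intersection, and that the set difference of two boxes already lies in $\mathscr{E}$. Once these are established, closure of $\mathscr{E}$ under all three operations is a routine distributive/De~Morgan argument, and the Boolean algebra structure follows by observing that $\FR^d$ is itself a box.

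First I would prove the intersection lemma. Since $B=\prod_{i=1}^d I_i$ and $B'=\prod_{i=1}^d I'_i$ give $B\cap B'=\prod_{i=1}^d(I_i\cap I'_i)$, it suffices to show that the intersection of two intervals of $\bar{\FR}$ is again an interval. This is precisely the step where the \emph{total} order on $\FR$---rather than a mere pre-order, cf.\ the remark preceding Def.~\ref{def:nonOrientedInt}---is indispensable: because $\le$ is total, the candidate endpoints $\max(\inf I,\inf I')$ and $\min(\sup I,\sup I')$ are well-defined elements of $\bar{\FR}$, the set $I\cap I'$ is order-convex, and a finite trichotomy case analysis on which of these endpoints are attained shows that $I\cap I'$ is one of the four forms $[a,b],(a,b),(a,b],[a,b)$ (allowing the degenerate empty interval $(a,a)$). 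Hence $B\cap B'\in\mathscr{B}^d$.

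For the difference I would use the standard $3^d$-decomposition. If $B'=\emptyset$ then $B\setminus B'=B$ is already a box, so assume every factor $I'_i$ of $B'$ is non-empty. In each coordinate the total order lets me split $I_i$ into three pairwise disjoint intervals, namely $I_i^{(2)}:=I_i\cap I'_i$ together with the lower and upper parts of $I_i\setminus I'_i$,
\[
I_i^{(1)}:=\{x\in I_i\mid x<z\ \text{for all}\ z\in I'_i\},\qquad I_i^{(3)}:=\{x\in I_i\mid x>z\ \text{for all}\ z\in I'_i\};
\]
each is an interval because removing an order-convex set from an order-convex set of a totally ordered set leaves at most a lower and an upper order-convex piece, and the three genuinely partition $I_i$ since $I'_i\neq\emptyset$ is order-convex. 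Distributing the product, $B=\prod_i I_i$ becomes the disjoint union of the at most $3^d$ boxes $\prod_i I_i^{(k_i)}$ with $k_i\in\{1,2,3\}$; the single term with all $k_i=2$ equals $B\cap B'$, whence
\[
B\setminus B'=\bigcup_{(k_1,\dots,k_d)\neq(2,\dots,2)}\prod_{i=1}^d I_i^{(k_i)}\in\mathscr{E}
\]
is a finite (indeed disjoint) union of boxes.

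Finally I would bootstrap to $\mathscr{E}$. Closure under union is immediate from the definition. Closure under intersection follows by distributing, $\bigl(\bigcup_i B_i\bigr)\cap\bigl(\bigcup_j C_j\bigr)=\bigcup_{i,j}(B_i\cap C_j)$, each summand a box by the first lemma. Closure under difference follows from De~Morgan together with the two previous closures: $\bigl(\bigcup_i B_i\bigr)\setminus\bigl(\bigcup_j C_j\bigr)=\bigcup_i\bigcap_j(B_i\setminus C_j)$, and each $B_i\setminus C_j\in\mathscr{E}$ by the box-difference step, so the finite intersection and then the finite union stay in $\mathscr{E}$. Since $\FR^d$ is itself a box (all factors equal to the interval $(-\infty,+\infty)$ of $\bar{\FR}$) and $\emptyset\in\mathscr{E}$, complementation inside $\FR^d$ is a special case of set difference, and $\mathscr{E}$ is a Boolean algebra. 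I expect the only genuinely delicate point to be the interval bookkeeping in $\FR$---verifying that the lower and upper parts of $I_i\setminus I'_i$ are honestly intervals with the correct open/closed endpoints; every such verification, however, is decided by the trichotomy of the total order $\le$, exactly as in the proof of Lem.~\ref{lem:disjIntervals}.
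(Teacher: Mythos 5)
Your proof is correct. It coincides with the paper's on union (trivial) and on intersection: both distribute the intersection over the finite unions and reduce to the fact, guaranteed by the total order, that the intersection of two intervals of $\bar{\FR}$ is again an interval. Where you genuinely differ is set difference. The paper never decomposes $B\setminus B'$ for boxes at all: it introduces complements $A^{c}:=\FR^{d}\setminus A$, writes the complement of a single box $C=\prod_{j=1}^{d}J_{j}$ as the union of the $2d$ slab boxes $\FR\times\dots\times(J_{l}^{c})^{\pm}\times\dots\times\FR$, where $(J_{l}^{c})^{+}=\{x\mid x>y\ \forall y\in J_{l}\}$ and $(J_{l}^{c})^{-}=\{x\mid x<y\ \forall y\in J_{l}\}$ are rays and hence intervals of $\bar{\FR}$, then gets $V^{c}=\bigcap_{k}C_{k}^{c}\in\mathscr{E}$ from the already-established closure under intersection, and concludes $U\setminus V=U\cap V^{c}\in\mathscr{E}$. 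You instead prove a box-minus-box lemma via the $3^{d}$ grid decomposition and assemble $U\setminus V=\bigcup_{i}\bigcap_{j}(B_{i}\setminus C_{j})$. The two arguments rest on the same order-theoretic fact --- by trichotomy, the complement of a nonempty interval of $\FR$ splits into a lower and an upper ray; indeed your $I_{i}^{(1)}$ and $I_{i}^{(3)}$ are exactly $I_{i}\cap(J^{c})^{-}$ and $I_{i}\cap(J^{c})^{+}$ with $J=I'_{i}$ --- but the bookkeeping differs. The paper's version is more economical ($2d$ slabs instead of $3^{d}$ cells) and yields complementation in $\FR^{d}$ explicitly, which is what exhibits $\mathscr{E}$ as a Boolean algebra with top element $\FR^{d}$; yours is more constructive, since it presents $B\setminus B'$ as a pairwise \emph{disjoint} union of boxes contained in $B$, exactly the kind of presentation used in Thm.~\ref{thm:intOnSumIntervals} and Thm.~\ref{thm:intElemSets}, so your lemma would be directly reusable there. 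One point to tighten: order-convexity alone does not make $I_{i}^{(1)}$ an interval in the paper's sense (one of the four forms with endpoints in $\bar{\FR}$); the clean justification is to identify $I_{i}^{(1)}$ as $I_{i}$ intersected with the ray $(J^{c})^{-}$, which equals $(-\infty,\inf J)$ or $(-\infty,\inf J]$ according as $\inf J\in J$ or not, and then invoke your own intersection lemma.
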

\begin{proof}
It is enough to show that $\mathscr{E}$ is closed under these operations.
The closure of $\mathscr{E}$ with respect to union is clear. If $U=\bigcup_{i=1}^{n}B_{i}$
and $V=\bigcup_{k=1}^{m}C_{k}$, where $B_{i}=\prod_{j=1}^{d}I_{ij}$
and $C_{k}=\prod_{j=1}^{d}J_{kj}$ are boxes, then
\[
U\cap V=\bigcup\left\{ \prod_{j=1}^{d}I_{ij}\cap J_{kj}\mid i=1,\ldots,n\ ,\ j=1,\ldots,m\right\} .
\]
Since $I_{ij}\cap J_{kj}\in\mathscr{I}$ (including the empty interval),
$U\cap V\in\mathscr{E}$. To prove the closure of $\mathscr{E}$ with
respect to set difference, we first introduce the notion of complement
$A^{c}:=\FR^{d}\setminus A$ for any subset $A\subseteq\FR^{d}$.
Using the same notations, since
\[
C_{k}^{c}=\left(\prod_{j=1}^{d}J_{kj}\right)^{c}=\cup\left\{ \prod_{i=1}^{l-1}\FR\times(J_{kl}^{c})^{s}\times\prod_{i=l+1}^{d}\FR\mid l=1,\ldots,d\;,\;s=\pm\right\} \in\mathscr{E},
\]
where $(J_{kl}^{c})^{+}=\{x\in\FR\mid x>y\;\forall y\in J_{kl}\}$
and $(J_{kl}^{c})^{-}=\{x\in\FR\mid x<y\;\forall y\in J_{kl}\}$.
we know that $V^{c}=(\cup_{k=1}^{m}C_{k})^{c}=\cap_{k=1}^{m}C_{k}^{c}\in\mathscr{E}$,
and hence $U\setminus V=U\cap V^{c}\in\mathscr{E}$. 
\end{proof}
In our framework, the following result corresponds to Fubini's theorem.
\begin{thm}
\label{thm:Fubini}Let $B=\prod_{i=1}^{d}I_{i}\in\mathscr{B}^{d}$
be a box in $\FR^{d}$. Let $U\subseteq\FR^{d}$ be a Fermat open
set such that $B\subseteq U$, and let $f:U\ra\FR$ be a smooth function.
Let $(\sigma_{1},\ldots,\sigma_{d})$ be a permutation of $\{1,\ldots,d\}$.
Then
\begin{multline*}
\int_{I_{1}}\ldots\left(\int_{I_{d}}f(x_{1},\ldots,x_{d})\diff{x_{d}}\right)\ldots\diff{x_{1}}\\
=\int_{I_{\sigma_{1}}}\ldots\left(\int_{I_{\sigma_{d}}}f(x_{1},\ldots,x_{d})\diff{x_{\sigma_{d}}}\right)\ldots\diff{x_{\sigma_{1}}}.
\end{multline*}
We denote this integral by $\int_{B}f$.\end{thm}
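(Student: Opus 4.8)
The plan is to reduce, by a change of variables, to the case in which every integration limit is a standard real, and then to compare little-oh polynomial representatives cell by cell, letting classical Fubini do the actual work. First I would normalise the limits. Writing $a_i:=\inf(I_i)$, $b_i:=\sup(I_i)$ and $\phi_i(\sigma):=a_i+(b_i-a_i)\sigma$, I apply integration by substitution (Thm.~\ref{thm:integrationByChangeOfVariable}) in each variable in turn. Since $U$ is Fermat open and contains $B$, each slice $\{x_i\mid x\in U\}$ is a Fermat open set containing $I_i$, so the substitution is legitimate even when $I_i$ is infinitesimal; the intermediate inner integrals are smooth in the remaining variables by Cor.~\ref{cor:smoothnessOfIntegralOperators} and the Cartesian closedness of $\FDiff$, so the scalar factors $(b_i-a_i)$ pull out by homogeneity (Thm.~\ref{thm:classicalFormulasOfIntegralCalculus}\ref{enu:homog}). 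Setting $g:=f\circ(\phi_1\times\cdots\times\phi_d)$, a smooth function on the Fermat open set $\phi^{-1}(U)\supseteq[0,1]^d$, both iterated integrals become $\prod_{i=1}^{d}(b_i-a_i)$ times an iterated integral of $g$ over the standard cube $[0,1]^d$, taken in the identity order and in the order $\sigma$ respectively. As the scalar prefactor is symmetric under permutation, it suffices to prove the statement for $g$ over $[0,1]^d$, i.e.\ with all limits equal to the standard reals $0$ and $1$.

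Next I would set up a single grid. Because $[0,1]^d_\R$ is compact and $g$ is smooth on a Fermat open neighbourhood of it, I can cover $[0,1]^d_\R$ by finitely many open boxes $R_\iota\subseteq\R^d$ on which $g=\ext{\gamma_\iota}(q_\iota,\blank)$ for ordinary smooth $\gamma_\iota$ and fixed parameters $q_\iota\in\ext{U_\iota}$. Collecting, in each coordinate, all the face-coordinates of the $R_\iota$ lying in $[0,1]$ produces a finite product grid $\{Q\}$ of real sub-boxes of $[0,1]^d_\R$, refined (a Lebesgue-number/common-refinement argument) so that each cell $Q$ is contained in some $R_\iota$. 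Crucially, this grid does not depend on the order of integration.

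On a fixed cell $Q$ the function $g$ has the single local expression $\ext{\gamma_\iota}(q_\iota,\blank)$, and since all faces of $Q$ are standard reals, iterating \eqref{eq:RiemannInt} (equivalently, Lem.~\ref{lem:IntegralAsfiniteSum} with $n=1$ together with the classical differentiation under the integral sign, which keeps the parametrised integrals smooth) shows that the iterated Fermat integral of $g$ over $Q$ \emph{in any order} is represented by the little-oh polynomial $t\mapsto$ the iterated Riemann integral of $\gamma_\iota((q_\iota)_t,\blank)$ over $Q$, taken in that same order. For each fixed $t\ge0$ this is an ordinary iterated Riemann integral of a continuous function over a rectangle, so classical Fubini renders it independent of the order; hence the two representatives coincide as little-oh polynomials, and the Fermat iterated integrals of $g$ over $Q$ agree for the two orders.

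Finally I would reassemble. Additivity of the integral over the cells of the grid, applied in each variable (Cor.~\ref{cor:integralAdditiveDomain}, equivalently Thm.~\ref{thm:intOnSumIntervals}), expresses each iterated integral of $g$ over $[0,1]^d$ as the sum over the cells $Q$ of the iterated integral over $Q$; since the grid is common to both orders and the per-cell integrals agree, the two sums agree, which gives the claim. I expect the main obstacle to be the bookkeeping of the third paragraph: one must verify that, on a single cell carrying a single chart, the iterated Fermat integral really is represented by the iterated Riemann integral of $\gamma_\iota((q_\iota)_t,\blank)$ with the \emph{same} $t$-dependent parameter, so that classical Fubini at each fixed $t$ can be transported back to an equality in $\FR$. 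Everything else is the compactness argument producing the common grid and the symmetry of the scalar prefactor.
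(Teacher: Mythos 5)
Your proposal is correct, but it follows a genuinely different route from the paper's proof. The paper gives a short ``global'' argument in the style it emphasizes throughout: for $d=2$ (the general case being similar), it sets $F(t):=\int_{\bar{a}}^{\bar{b}}f(t,y)\diff{y}$ and $G(t):=\int_{\bar{a}}^{\bar{b}}\left(\int_{a}^{t}f(x,y)\diff{x}\right)\diff{y}$, uses differentiation under the integral sign (Lem.~\ref{lem:derivationUnderIntegralSign}) to get $G'=F$ with $G(a)=0$, and then invokes the uniqueness of primitives (Def.~\ref{def:integral}, Thm.~\ref{thm:existenceOfIntegralForFiniteBoundaries}) to conclude $G=\int_{a}^{(-)}F$, so that evaluating at $b$ is exactly the claimed identity; no representatives, no substitution, no grid, and arbitrary (even infinitesimal) endpoints are handled with no extra work. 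Your proof is instead of the ``local'' type that Rem.~\ref{rem:integral-local} mentions as an alternative and that the paper itself uses for the order inequalities (Lem.~\ref{lem:monotoneIntegral}, Thm.~\ref{thm:substIntAbsValue}): normalize the endpoints by substitution, choose a common real grid subordinate to finitely many charts $\ext{\gamma_{\iota}}(q_{\iota},\blank)$, apply classical Fubini to the representatives $t\mapsto\Rint\cdots\Rint\gamma_{\iota}((q_{\iota})_{t},\blank)$ cell by cell, and reassemble by additivity (Cor.~\ref{cor:integralAdditiveDomain}) and linearity. What your route buys is that the only genuinely multi-dimensional input is the classical Fubini theorem (the Fermat-level Lem.~\ref{lem:derivationUnderIntegralSign} is not needed), and it treats general $d$ and a general permutation $\sigma$ directly rather than by ``the general case is similar''; what it costs is the normalization step, which is unavoidable in your approach since a grid must have standard faces, and the chart bookkeeping you flag yourself. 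One small imprecision to repair: the legitimacy of the substitution in the $i$-th variable does not follow from the projection $\left\{ x_{i}\mid x\in U\right\}$ being Fermat open, but from the slices of $U=\ext{A}$ with the other coordinates fixed; since the open set $A$ contains the compact set $\st{B}$, it contains a uniform real $\delta$-neighbourhood of it, so each such slice contains the non-infinitesimal interval $(a_{i}-\delta,b_{i}+\delta)$ on which $f$ is smooth in $x_{i}$, which is what Thm.~\ref{thm:integrationByChangeOfVariable} requires. With that adjustment your argument goes through.
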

\begin{proof}
We prove only for the case $d=2$, since the general case is similar.
Let $a$, $b$, $\bar{a}$, $\bar{b}$ be the endpoints of $I_{1}$
and $I_{2}$, respectively. Set $F(t):=\int_{\bar{a}}^{\bar{b}}f(t,y)\diff{y}$
and $G(t):=\int_{\bar{a}}^{\bar{b}}\left(\int_{a}^{t}f(x,y)\diff{x}\right)\diff{y}$
for each $t\in I_{1}$. Both $F$ and G are smooth functions such
that $G(a)=0$ and
\[
G'(t)=\int_{\bar{a}}^{\bar{b}}\frac{\partial}{\partial t}\left(\int_{a}^{t}f(x,y)\diff{x}\right)\diff{y}=\int_{\bar{a}}^{\bar{b}}f(t,y)\diff{y}=F(t)\quad\forall t\in I_{1}.
\]
Therefore, $G=\int_{a}^{(-)}F$ and $G(b)=\int_{\bar{a}}^{\bar{b}}\left(\int_{a}^{b}f(x,y)\diff{x}\right)\diff{y}=\int_{a}^{b}F(t)\diff{t}=\int_{a}^{b}\left(\int_{\bar{a}}^{\bar{b}}f(x,y)\diff{y}\right)\diff{x}$,
which is our conclusion.
\end{proof}
\noindent Similarly to Thm.~\ref{thm:intOnSumIntervals}, we can
finally prove the following:
\begin{thm}
\label{thm:intElemSets}Let $\left\{ B_{1},\ldots,B_{n}\right\} $
and $\left\{ C_{1},\ldots,C_{m}\right\} $ be two finite pairwise
disjoint families of $\mathscr{B}^{d}$ such that
\[
\bigcup_{i=1}^{n}B_{i}=\bigcup_{j=1}^{m}C_{j}=:U.
\]
Let $V$ be a Fermat open subset of $\FR^{d}$ containing $U$, and
let $f:V\ra\FR$ be a smooth \emph{function}. Then
\begin{equation}
\sum_{i=1}^{n}\int_{B_{i}}f=\sum_{j=1}^{m}\int_{C_{j}}f=:\int_{U}f.\label{eq:intSumIntervals-1}
\end{equation}

\end{thm}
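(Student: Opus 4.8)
The plan is to reduce the statement, exactly as in the one-dimensional Thm.~\ref{thm:intOnSumIntervals}, to a common refinement of the two families by a product grid, and then match grid cells using the disjointness and the equality of the two unions. For each coordinate direction $k=1,\ldots,d$ I would collect into a finite set $E_k\subseteq\bar{\FR}$ all the endpoints $\inf(I)$ and $\sup(I)$ of the $k$-th factor intervals of every box $B_1,\ldots,B_n,C_1,\ldots,C_m$. Since $\le$ is a total order (Sec.~\ref{sec:Background-for-FR}), I can enumerate $E_k=\{e^k_0<e^k_1<\dots<e^k_{N_k}\}$ and thereby subdivide the relevant range into minimal grid intervals with consecutive endpoints $e^k_{l-1},e^k_l$. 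A \emph{grid cell} is then a product $Q=\prod_{k=1}^d[e^k_{l_k-1},e^k_{l_k}]$; because $\int_Q$ is built from $\int_{\inf}^{\sup}$ (Def.~\ref{def:nonOrientedInt} and Thm.~\ref{thm:Fubini}), the open/closed status of its factors is irrelevant to the value $\int_Q f$.

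Next I would say that a cell $Q$ is \emph{subordinate} to $B_i=\prod_k I_i^{(k)}$ if $[e^k_{l_k-1},e^k_{l_k}]\subseteq[\inf I_i^{(k)},\sup I_i^{(k)}]$ for every $k$. Using Fubini (Thm.~\ref{thm:Fubini}) to write $\int_{B_i}f$ as an iterated one-dimensional integral, splitting each factor integral $\int_{I_i^{(k)}}$ at the grid points it contains via additivity with respect to integration intervals (Cor.~\ref{cor:integralAdditiveDomain}), and pulling the resulting finite sums through the remaining integrations by linearity (Thm.~\ref{thm:classicalFormulasOfIntegralCalculus}\ref{enu:additivity}), I would obtain
\[
\int_{B_i}f=\sum_{Q\text{ subordinate to }B_i}\int_Q f,
\]
and likewise for each $C_j$. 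Here the one-dimensional slices of the Fermat open set $V$ are non-infinitesimal, so Cor.~\ref{cor:integralAdditiveDomain} applies with arbitrary Fermat bounds $e^k_l$.

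To match cells, for a given cell $Q$ I would pick an interior point $q=(q_1,\ldots,q_d)$ with $e^k_{l_k-1}<q_k<e^k_{l_k}$ for all $k$; such a point exists because $\bar{\FR}$ is densely ordered. Since every endpoint of every $I_i^{(k)}$ is a grid point and $q_k$ lies strictly inside a minimal grid interval, one checks directly that $q_k\in I_i^{(k)}$ if and only if $[e^k_{l_k-1},e^k_{l_k}]\subseteq[\inf I_i^{(k)},\sup I_i^{(k)}]$, irrespective of the open/closed type of $I_i^{(k)}$. Hence $q\in B_i$ exactly when $Q$ is subordinate to $B_i$. As $\{B_i\}$ is pairwise disjoint with $\bigcup_i B_i=U$, the point $q$ lies in at most one $B_i$, and $q\in U$ iff $Q$ is subordinate to some $B_i$; the same holds verbatim for $\{C_j\}$. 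Therefore both sides of \eqref{eq:intSumIntervals-1} equal $\sum\{\int_Q f\mid Q\text{ has its interior point in }U\}$, which is the asserted equality.

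The one real difficulty is the boundary bookkeeping: the two families tile $U$ only as sets, so adjacent boxes share boundary faces and a given endpoint may belong to one box or to its neighbour. This is precisely the phenomenon already met in one dimension (the cases $b_i=a_{i+1}$ in the proof of Thm.~\ref{thm:intOnSumIntervals}). It is resolved here by the two observations above: that $\int_Q f$ depends only on the endpoints of $Q$ and not on which box owns a shared face, while set membership is tracked cleanly through the strictly interior representative $q$. Together these make subordination well defined and transfer the disjointness of the boxes to a disjointness of the contributing cells, so that the two cell sums visibly coincide.
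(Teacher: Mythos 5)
Your proof is correct, but it takes a genuinely different route from the one the paper gestures at. The paper in fact gives no proof of Thm.~\ref{thm:intElemSets}; it only asserts that the result follows ``similarly to Thm.~\ref{thm:intOnSumIntervals}'', and the one\-/dimensional proof it points to \emph{coarsens}: it orders the intervals via Lem.~\ref{lem:disjIntervals}, merges adjacent joinable intervals, and reduces both sums to the canonical decomposition of $U$ into connected components. That strategy does not generalize to $d>1$, because a connected component of a finite union of boxes need not be a box (an L-shaped union already fails), so there is no canonical coarsening to aim for. Your argument instead \emph{refines}: you form the product grid of all endpoints of both families, expand each $\int_{B_i}f$ and $\int_{C_j}f$ as a sum over subordinate grid cells via Fubini (Thm.~\ref{thm:Fubini}), additivity with respect to integration intervals (Cor.~\ref{cor:integralAdditiveDomain}) and linearity (Thm.~\ref{thm:classicalFormulasOfIntegralCalculus}\ref{enu:additivity}), and then match cells through a strictly interior representative point; disjointness of the boxes transfers to the cells exactly through that representative, and the fact that $\int_Q f$ depends only on the endpoints of $Q$ (Def.~\ref{def:nonOrientedInt}) disposes of the shared-face bookkeeping, which is the higher-dimensional analogue of the cases $b_i=a_{i+1}$ in the paper's one-dimensional proof. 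Two details are worth making explicit to match even the paper's level of rigor: first, your splitting at grid points really integrates over the \emph{closed hull} $\prod_{k}[\inf I_i^{(k)},\sup I_i^{(k)}]$ of each box, so you need this hull to lie in $V$; this holds because a nonempty interval and its closed hull have the same set of standard parts, hence the hull is contained in every Fermat open set containing the box. Second, boxes with a degenerate or empty factor contribute zero on both sides (their integral vanishes and no cell is subordinate to them), so they should be discarded at the outset. With these remarks your proof is complete; it is arguably the argument the paper's ``similarly'' must secretly mean, since of the two strategies it is the only one that survives in dimension $d>1$, and as a bonus, specialized to $d=1$ it yields an alternative proof of Thm.~\ref{thm:intOnSumIntervals} itself.
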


\section{Conclusions}

The main aim of the present work is to lay the foundation of the integral
calculus for smooth functions defined on the ring of Fermat reals.
In the Introduction \ref{sec:Introduction}, we have already summarized
the major achievements of this theory compared to other non-Archimedean
theories. We can outline them by saying: better order properties and
easier to deal with infinite dimensional integral operators.

The present work confirms that the ring of Fermat reals does not represent
a new foundation of the calculus. On the other hand, the definition
of integral as primitive (Def.~\ref{def:integral}) and the corresponding
simple ``global proofs'' (see e.g.~Thm.~\ref{thm:classicalFormulasOfIntegralCalculus})
can suggest ideas for a new approach to a subset of the classical
integral calculus. On these bases, we can also think of the possibility
to approach the integral calculus of smooth functions from an axiomatic
point of view.

Finally, our work shows that all the classical instruments to deal
with integrals of smooth functions with infinitesimal parameters are
now available. This is important for the applications in Physics.

A suitable notion of convergence for sequences of smooth functions,
the exchange between limit and integral, and a more general class
of integration domains remain open. We plan to explore them in a future
work.

\end{document}